\newtheorem{thm}{Theorem} [section]
\newtheorem{lemma}[thm]{Lemma}
\newtheorem{corollary}[thm]{Corollary}
\newtheorem{prop}[thm]{Proposition}
\newtheorem{notation}[thm]{Notation}
\newtheorem*{rough-thm-1}{Rough Version of Vanishing Theorem}
\newtheorem*{rough-thm-2}{Rough Version of Exactness Theorem}
\newtheorem*{main example}{Main Example}
\theoremstyle{definition}
\newtheorem*{basic convention}{Basic Conventions}
\newtheorem{defn}[thm]{Definition}
\theoremstyle{remark}
\newtheorem{remark}[thm]{Remark}
\begin{document}

\numberwithin{equation}{section}

\newcommand{\hs}{\mbox{\hspace{.4em}}}
\newcommand{\ds}{\displaystyle}
\newcommand{\bd}{\begin{displaymath}}
\newcommand{\ed}{\end{displaymath}}
\newcommand{\bcd}{\begin{CD}}
\newcommand{\ecd}{\end{CD}}

\newcommand{\proj}{\operatorname{Proj}}
\newcommand{\bproj}{\underline{\operatorname{Proj}}}
\newcommand{\spec}{\operatorname{Spec}}
\newcommand{\bspec}{\underline{\operatorname{Spec}}}
\newcommand{\pline}{{\mathbf P} ^1}
\newcommand{\pplane}{{\mathbf P}^2}
\newcommand{\coker}{{\operatorname{coker}}}
\newcommand{\ldb}{[[}
\newcommand{\rdb}{]]}

\newcommand{\Sym}{\operatorname{Sym}^{\bullet}}
\newcommand{\Symp}{\operatorname{Sym}}
\newcommand{\Pic}{\operatorname{Pic}}
\newcommand{\AAut}{\operatorname{Aut}}
\newcommand{\PAut}{\operatorname{PAut}}

\newcommand{\too}{\twoheadrightarrow}
\newcommand{\C}{{\mathbf C}}
\newcommand{\cA}{{\mathcal A}}
\newcommand{\cS}{{\mathcal S}}
\newcommand{\cV}{{\mathcal V}}
\newcommand{\cM}{{\mathcal M}}
\newcommand{\bA}{{\mathbf A}}
\newcommand{\aline}{\mathbb{A}^1}
\newcommand{\cB}{{\mathcal B}}
\newcommand{\cC}{{\mathcal C}}
\newcommand{\cD}{{\mathcal D}}
\newcommand{\D}{{\mathcal D}}
\newcommand{\cs}{{\mathbf C} ^*}
\newcommand{\boldc}{{\mathbf C}}
\newcommand{\cE}{{\mathcal E}}
\newcommand{\cF}{{\mathcal F}}
\newcommand{\cG}{{\mathcal G}}
\newcommand{\G}{{\mathbf G}}
\newcommand{\fg}{{\mathfrak g}}
\newcommand{\ft}{\mathfrak t}
\newcommand{\bH}{{\mathbf H}}
\newcommand{\cH}{{\mathcal H}}
\newcommand{\cI}{{\mathcal I}}
\newcommand{\cJ}{{\mathcal J}}
\newcommand{\cK}{{\mathcal K}}
\newcommand{\cL}{{\mathcal L}}
\newcommand{\baL}{{\overline{\mathcal L}}}
\newcommand{\M}{{\mathcal M}}
\newcommand{\bM}{{\mathbf M}}
\newcommand{\bm}{{\mathbf m}}
\newcommand{\cN}{{\mathcal N}}
\newcommand{\theo}{\mathcal{O}}
\newcommand{\cP}{{\mathcal P}}
\newcommand{\cR}{{\mathcal R}}
\newcommand{\boldp}{{\mathbf P}}
\newcommand{\boldq}{{\mathbf Q}}
\newcommand{\bbL}{{\mathbf L}}
\newcommand{\cQ}{{\mathcal Q}}
\newcommand{\cO}{{\mathcal O}}
\newcommand{\Oo}{{\mathcal O}}
\newcommand{\OX}{{\Oo_X}}
\newcommand{\OY}{{\Oo_Y}}
\newcommand{\otY}{{\underset{\OY}{\ot}}}
\newcommand{\otX}{{\underset{\OX}{\ot}}}
\newcommand{\cU}{{\mathcal U}}
\newcommand{\cX}{{\mathcal X}}
\newcommand{\cW}{{\mathcal W}}
\newcommand{\boldz}{{\mathbf Z}}
\newcommand{\cZ}{{\mathcal Z}}
\newcommand{\qgr}{\operatorname{qgr}}
\newcommand{\gr}{\operatorname{gr}}
\newcommand{\coh}{\operatorname{coh}}
\newcommand{\End}{\operatorname{End}}
\newcommand{\Hom}{\operatorname{Hom}}
\newcommand{\uHom}{\underline{\operatorname{Hom}}}
\newcommand{\uHomY}{\uHom_{\OY}}
\newcommand{\uHomX}{\uHom_{\OX}}
\newcommand{\Ext}{\operatorname{Ext}}
\newcommand{\bExt}{\operatorname{\bf{Ext}}}
\newcommand{\Tor}{\operatorname{Tor}}

\newcommand{\inv}{^{-1}}
\newcommand{\airtilde}{\widetilde{\hspace{.5em}}}
\newcommand{\airhat}{\widehat{\hspace{.5em}}}
\newcommand{\nt}{^{\circ}}
\newcommand{\del}{\partial}

\newcommand{\supp}{\operatorname{supp}}
\newcommand{\GK}{\operatorname{GK-dim}}
\newcommand{\hd}{\operatorname{hd}}
\newcommand{\id}{\operatorname{id}}
\newcommand{\res}{\operatorname{res}}
\newcommand{\lrar}{\leadsto}
\newcommand{\im}{\operatorname{Im}}
\newcommand{\HH}{\operatorname{H}}
\newcommand{\TF}{\operatorname{TF}}
\newcommand{\Bun}{\operatorname{Bun}}
\newcommand{\Hilb}{\operatorname{Hilb}}
\newcommand{\Fact}{\operatorname{Fact}}
\newcommand{\F}{\mathcal{F}}
\newcommand{\nthord}{^{(n)}}
\newcommand{\Aut}{\underline{\operatorname{Aut}}}
\newcommand{\Gr}{\operatorname{Gr}}
\newcommand{\Fr}{\operatorname{Fr}}
\newcommand{\GL}{\operatorname{GL}}
\newcommand{\gl}{\mathfrak{gl}}
\newcommand{\SL}{\operatorname{SL}}
\newcommand{\ff}{\footnote}
\newcommand{\ot}{\otimes}
\def\Ext{\operatorname {Ext}}
\def\Hom{\operatorname {Hom}}
\def\Ind{\operatorname {Ind}}
\def\bbZ{{\mathbb Z}}

\newcommand{\nc}{\newcommand}
\newcommand{\on}{\operatorname}
\nc{\cont}{\on{cont}}
\nc{\rmod}{\on{mod}}
\nc{\Mtil}{\widetilde{M}}
\nc{\wb}{\overline}
\nc{\wt}{\widetilde}
\nc{\wh}{\widehat}
\nc{\sm}{\setminus}
\nc{\mc}{\mathcal}
\nc{\mbb}{\mathbb}
\nc{\Mbar}{\wb{M}}
\nc{\Nbar}{\wb{N}}
\nc{\Mhat}{\wh{M}}
\nc{\pihat}{\wh{\pi}}
\nc{\JYX}{\cJ_{Y\leftarrow X}}
\nc{\phitil}{\wt{\phi}}
\nc{\Qbar}{\wb{Q}}
\nc{\DYX}{\D_{Y\leftarrow X}}
\nc{\DXY}{\D_{X\to Y}}
\nc{\dR}{\stackrel{\bbL}{\underset{\D_X}{\ot}}}
\nc{\Winfi}{\cW_{1+\infty}}
\nc{\K}{{\mc K}}
\nc{\unit}{{\bf \on{unit}}}
\nc{\boxt}{\boxtimes}
\nc{\xarr}{\stackrel{\rightarrow}{x}}
\nc{\Cnatbar}{\overline{C}^{\natural}}
\nc{\oJac}{\overline{\on{Jac}}}
\nc{\gm}{{\mathbf G}_m}
\nc{\Loc}{\on{Loc}}
\nc{\Bm}{\operatorname{Bimod}}
\nc{\lie}{{\mathfrak g}}
\nc{\lb}{{\mathfrak b}}
\nc{\lien}{{\mathfrak n}}
\nc{\e}{\epsilon}
\nc{\eu}{\mathsf{eu}}
\nc{\Q}{\mathbb{Q}}

\nc{\Gm}{{\mathbb G}_m}
\nc{\Gabar}{\wb{\G}_a}
\nc{\Gmbar}{\wb{\G}_m}
\nc{\PD}{{\mathbb P}_{\D}}
\nc{\Pbul}{P_{\bullet}}
\nc{\PDl}{{\mathbb P}_{\D(\lambda)}}
\nc{\PLoc}{\mathsf{MLoc}}
\nc{\Tors}{\on{Tors}}
\nc{\PS}{{\mathsf{PS}}}
\nc{\PB}{{\mathsf{MB}}}
\nc{\Pb}{{\underline{\operatorname{MBun}}}}
\nc{\Ht}{\mathsf{H}}
\nc{\bbH}{\mathbb H}
\nc{\gen}{^\circ}
\nc{\Jac}{\operatorname{Jac}}
\nc{\sP}{\mathsf{P}}
\nc{\sT}{\mathsf{T}}
\nc{\bP}{{\mathbb P}}
\nc{\otc}{^{\otimes c}}
\nc{\Det}{\mathsf{det}}
\nc{\PL}{\on{ML}}
\nc{\sL}{\mathsf{L}}

\nc{\ml}{{\mathcal S}}
\nc{\Xc}{X_{\on{con}}}
\nc{\Z}{{\mathbb Z}}
\nc{\resol}{\mathfrak{X}}
\nc{\map}{\mathsf{f}}
\nc{\gK}{\mathbb{K}}
\nc{\bigvar}{\mathsf{W}}
\nc{\Tmax}{\mathsf{T}^{md}}

\nc{\sA}{\mathsf{A}}
\nc{\sS}{\mathsf{S}}

\nc{\Cpt}{\mathbb{P}}
\nc{\pv}{e}

\nc{\Qdbl}{Q^{\on{dbl}}}
\nc{\Qgtr}{Q^{\on{gtr}}}
\nc{\algtr}{\alpha^{\on{gtr}}}
\nc{\Ggtr}{\mathbb{G}^{\on{gtr}}}
\nc{\chigtr}{\chi^{\on{gtr}}}

\newcommand{\la}{\langle}
\newcommand{\ra}{\rangle}
\newcommand{\fm}{\mathfrak m}
\newcommand{\Ms}{\mathfrak M}
\newcommand{\Ma}{\mathfrak M_0}
\newcommand{\ML}{\mathfrak L}
\newcommand{\ev}{\textit{ev}}
\nc{\thetagtr}{\theta^{\on{gtr}}}
\nc{\bS}{\mathbb{S}}
\nc{\Sgtr}{\mathbb{S}^{\on{gtr}}}

\nc{\intN}{_{[0,N]}}

\numberwithin{equation}{section}

\title{The Pure Cohomology of Multiplicative Quiver Varieties}

\author{Kevin McGerty}
\address[McGerty]{Mathematical Institute\\University of Oxford\\Oxford OX1 3LB, UK}
\email[McGerty]{mcgerty@maths.ox.ac.uk}
\address[McGerty]{Department of Mathematics\\University of Illinois at Urbana-Champaign\\Urbana, IL 61801 USA}
\author{Thomas Nevins}
\address[Nevins]{Department of Mathematics\\University of Illinois at Urbana-Champaign\\Urbana, IL 61801 USA}
\email[Nevins]{nevins@illinois.edu}


\begin{abstract}
To a quiver $Q$ and choices of nonzero scalars $q_i$, non-negative integers $\alpha_i$, and integers $\theta_i$ labeling each vertex $i$, Crawley-Boevey--Shaw associate a {\em multiplicative quiver variety} $\mathcal{M}_\theta^q(\alpha)$, a trigonometric analogue of the Nakajima quiver variety associated to $Q$, $\alpha$, and $\theta$.  We prove that the pure cohomology, in the Hodge-theoretic sense, of the stable locus $\mathcal{M}_\theta^q(\alpha)^{\on{s}}$ is generated as a $\mathbb{Q}$-algebra by the tautological characteristic classes.  In particular, the pure cohomology of genus $g$ twisted character varieties of $GL_n$ is generated by tautological classes.  
\end{abstract}

\maketitle

\section{Introduction}\label{sec:intro}
Let $Q = (I,\Omega)$ be a quiver.  Fix a vector $q\in (\mathbb{C}^\times)^I$.  Associated to these data is a noncommutative algebra $\Lambda^q$, the {\em multiplicative preprojective algebra} \cite{CBS} of $Q$ with parameter $q$.
Letting $\alpha\in \mathbb{Z}_{\geq 0}^I$ be a dimension vector for $Q$ and choosing a stability condition $\theta\in \mathbb{Z}^I$, we get a moduli space $\mathcal{M}_\theta^q(\alpha)$ of $\theta$-semistable representations of $\Lambda^q$ with dimension vector $\alpha$, called a {\em multiplicative quiver variety}, investigated in \cite{CBS, Yamakawa} (and both investigated and substantially generalized in \cite{Boalch}).  Multiplicative quiver varieties provide concrete realizations of character varieties and related spaces: see \cite{BoalchYamakawa, BezKap, ST} among others.

\subsection{Results}
As for its cousins, the Nakajima quiver varieties, the multiplicative quiver 
variety $\mathcal{M}_{\theta}^q(\alpha)$ is defined as a GIT quotient (at a character $\chi_\theta: \mathbb{G}\rightarrow \Gm$) of an affine algebraic variety $\on{Rep}(\Lambda^q,\alpha)$ by the group $\mathbb{G} = \big(\prod_i GL(\alpha_i)\big)/\Delta(\Gm)$, a product of general linear groups modulo the diagonal copy of $\Gm$; when it is a {\em free} quotient, this endows $\mathcal{M}_{\theta}^q(\alpha)$ with a map $\mathcal{M}_{\theta}^q(\alpha)\rightarrow B\mathbb{G}$.  

The rational cohomology $H^*(B\mathbb{G},\mathbb{Q})$ is pure in the sense of Hodge theory: $H^m(B\mathbb{G},\mathbb{Q}) = W_mH^m(B\mathbb{G},\mathbb{Q})$, where $ W_mH^m(B\mathbb{G},\mathbb{Q})$ denotes the weight $m$ part.  Thus, the image of the pullback map on cohomology must land in the pure part, in the Hodge-theoretic sense, namely
 \bd
 PH^*(\mathcal{M}_{\theta}^q(\alpha)) \overset{\operatorname{def}}{=} \ds\bigoplus_m W_mH^m\big(\mathcal{M}_{\theta}^q(\alpha), \mathbb{Q}\big)
 \ed
 of $H^*\big(\mathcal{M}_{\theta}^q(\alpha), \mathbb{Q}\big)$.
The main result of the present paper is:
\begin{thm}\label{main thm}
\mbox{}
\begin{enumerate}
\item
Suppose that $U\subseteq \mathcal{M}_{\theta}^q(\alpha)^{\on{s}}$ is any connected open subset of the stable locus of the multiplicative quiver variety $\mathcal{M}_{\theta}^q(\alpha)$.  Then the induced map on cohomology
\bd
H^*(B\mathbb{G}, \mathbb{Q})\rightarrow H^*\big(U, \mathbb{Q}\big)
\ed
defines a surjection onto the pure cohomology
$PH^*(U) = \ds\bigoplus_m W_mH^m\big(U, \mathbb{Q}\big)$.  
\item
 In particular, if $\mathcal{M}_{\theta}^q(\alpha) = \mathcal{M}_{\theta}^q(\alpha)^{\on{s}}$ and $\mathcal{M}_{\theta}^q(\alpha)$ is connected, then
\bd
H^*(B\mathbb{G}, \mathbb{Q})\rightarrow H^*\big(\mathcal{M}_{\theta}^q(\alpha), \mathbb{Q}\big)
\ed
surjects onto $PH^*\big(\mathcal{M}_{\theta}^q(\alpha)\big)$.
\end{enumerate}
\end{thm}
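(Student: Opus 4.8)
The plan is to reduce the statement to a Kirwan-surjectivity result for the GIT quotient $\mathcal{M}_\theta^q(\alpha) = \on{Rep}(\Lambda^q,\alpha)^{\theta\textrm{-ss}}/\!\!/\mathbb{G}$ and then to exploit a Morse-theoretic stratification of $\on{Rep}(\Lambda^q,\alpha)$ adapted to the moment-map picture for $\mathbb{G}$. Concretely, $\on{Rep}(\Lambda^q,\alpha)$ sits as a (complex-symplectic, indeed multiplicative-symplectic) subvariety of the ambient space of representations of the doubled quiver, cut out by the multiplicative moment-map equation; the group $\mathbb{G}$ acts preserving this structure, and the stable locus is the locus where the $\chi_\theta$-GIT-stability condition holds. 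The tautological classes are exactly the pullbacks of $H^*(B\mathbb{G},\mathbb{Q})$ under $\mathcal{M}_\theta^q(\alpha)^{\on{s}}\to B\mathbb{G}$. So the theorem is the assertion that the composite $H^*(B\mathbb{G},\mathbb{Q}) = H^*_{\mathbb{G}}(\on{pt}) \to H^*_{\mathbb{G}}\big(\on{Rep}(\Lambda^q,\alpha)^{\theta\textrm{-ss}}\big) = H^*(\mathcal{M}_\theta^q(\alpha)^{\on{s}})$ hits all of $PH^*$, and likewise after restriction to any connected open $U$.

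The first step is to replace the noncompact, singular-looking setup by something to which an equivariant Morse theory / Kirwan stratification applies. I would use the Hesselink--Kempf--Kirwan--Ness stratification of $\on{Rep}(\Lambda^q,\alpha)$ (or rather of the ambient doubled-quiver representation space intersected with the moment-map fiber) by unstable strata indexed by one-parameter subgroups of $\mathbb{G}$, with the semistable locus as the open stratum. The key cohomological input is \emph{equivariant perfection} of this stratification: each unstable stratum $S_\beta$ should be $\mathbb{G}$-equivariantly cohomologically "perfect" so that $H^*_{\mathbb{G}}(\on{Rep}(\Lambda^q,\alpha)) \to H^*_{\mathbb{G}}(\on{Rep}(\Lambda^q,\alpha)^{\theta\textrm{-ss}})$ is surjective; combined with the fact that $\on{Rep}(\Lambda^q,\alpha)$ — being a global complete intersection inside an affine space, or at least equivariantly formal with cohomology generated from $H^*(B\mathbb{G})$ — has $H^*_{\mathbb{G}}$ generated by the tautological classes, one concludes Kirwan surjectivity onto the full equivariant (hence ordinary) cohomology of the quotient. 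The passage from "surjects onto $H^*(\mathcal{M}_\theta^q(\alpha)^{\on{s}})$" when that is all one can get — note that in general one only surjects onto the \emph{pure} part — is where Hodge theory enters: one invokes that $H^*(B\mathbb{G})$ is pure, that pullback is a morphism of mixed Hodge structures, and a weight/dimension count (via, e.g., the Atiyah--Bott or Kirwan argument refined by the theory of the "pure part" as in the work on character varieties) forcing the image to be exactly $PH^*$.

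For part (1), the refinement to an arbitrary connected open $U \subseteq \mathcal{M}_\theta^q(\alpha)^{\on{s}}$, the idea is that purity is very restrictive on open subsets: I would show that the restriction map $PH^*(\mathcal{M}_\theta^q(\alpha)^{\on{s}}) \to PH^*(U)$ is surjective, because classes in $PH^*(U)$ extend — a class of weight equal to its degree on a smooth variety that is restricted from an ambient smooth variety containing $U$ as a dense open; more precisely one uses that $PH^k(U)$ is a quotient (or receives a surjection from) $PH^k$ of any smooth compactification-like object, together with the strong purity statement that for $U$ inside the smooth variety $\mathcal{M}_\theta^q(\alpha)^{\on{s}}$ the pure cohomology is "topological" and controlled by the ambient space. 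Chaining $H^*(B\mathbb{G}) \to PH^*(\mathcal{M}_\theta^q(\alpha)^{\on{s}}) \to PH^*(U)$ with both maps surjective gives the claim. The connectedness of $U$ is used to pin down $H^0$ and to make the weight filtration behave; one should also reduce to the case where $\mathcal{M}_\theta^q(\alpha)$ itself, rather than just its stable locus, is the quotient by working $\mathbb{G}$-equivariantly over the $\theta$-stable locus in $\on{Rep}(\Lambda^q,\alpha)$ where the $\mathbb{G}$-action is free.

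The main obstacle, as I see it, is \emph{equivariant perfection of the unstable stratification in the multiplicative setting}: unlike the linear (Nakajima) case, $\on{Rep}(\Lambda^q,\alpha)$ is cut out of the doubled-quiver representation space by a multiplicative moment map whose fibers are not obviously "as nice" as a complete intersection, and the unstable strata $S_\beta$ are not linear subspaces but are fibered over fixed-point loci of Levi subgroups acting on \emph{smaller} multiplicative quiver varieties; one must verify that the normal bundles to the strata have the expected equivariant Euler classes (non-zero-divisors) so the Atiyah--Bott/Kirwan inductive argument closes up. A secondary difficulty is the Hodge-theoretic bookkeeping: ensuring that all the maps in sight are morphisms of mixed Hodge structures and that the "pure quotient" of the Kirwan map really is all of $PH^*$ and not a proper subspace — this requires a precise purity statement for $H^*_{\mathbb{G}}\big(\on{Rep}(\Lambda^q,\alpha)\big)$ and compatibility of the stratification spectral sequence with weights, which I would establish by realizing everything over a finite field / using weights of Frobenius as a parallel track, or by a careful mixed-Hodge-module argument on the stack $[\on{Rep}(\Lambda^q,\alpha)/\mathbb{G}]$.
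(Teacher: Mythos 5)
Your proposal takes a genuinely different route from the paper, and it has gaps that I believe are fatal to the argument as outlined.

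The paper does \emph{not} use a Kirwan/Atiyah--Bott stratification argument. Instead it proceeds by: (i) building a graded ``homogenization'' $\sA$ of the multiplicative preprojective algebra, a graded-tripled quiver $\Qgtr$, and an induction functor realizing $\on{Rep}(\Lambda^q,\alpha)^{\theta\text{-s}}/\bS$ as an open substack of a moduli stack $\on{Rep}_{\on{gr}}(\sA\intN,\algtr)^{\thetagtr\text{-ss}}/\Sgtr$ whose coarse space is projective; (ii) resolving singularities (Kirwan/Edidin--Rydh) to obtain a smooth projective Deligne--Mumford compactification; (iii) constructing a three-term ``Ext'' complex $C$ on the product of the stable locus and its compactification, computing its cohomology sheaves carefully, and applying Markman's Lemma to conclude that $c_m(C)=[\Gamma]$, the class of the graph; (iv) an integral-transform/K\"unneth-components argument (Proposition 6.4) showing that the restriction of the compactification's cohomology is spanned by K\"unneth components of $[\Gamma]$, hence by tautological classes; and (v) Deligne's weight theory for open subsets of smooth projective compactifications (Proposition 6.6) to identify that image with $PH^*$. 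This is the strategy of the authors' earlier paper on Nakajima quiver varieties, and indeed the introduction explicitly explains why it is preferred even in that easier case.

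Your approach has two specific problems. First, the assertion that $H^*_{\mathbb{G}}\big(\on{Rep}(\Lambda^q,\alpha)\big)$ is generated by $H^*(B\mathbb{G})$ (``equivariantly formal with cohomology generated from $H^*(B\mathbb{G})$'') is false. Unlike the additive moment-map fiber, which is a $\Gm$-cone in an affine space and hence equivariantly contractible, $\on{Rep}(\Lambda^q,\alpha)$ is a quasi-affine variety: it is the open locus where the elements $1+aa^*$ are invertible inside a closed subscheme of $\on{Rep}(k\Qdbl,\alpha)$, and the invertibility conditions destroy any cone structure. Already in the simplest example (one vertex, $g$ loops, $\alpha=1$, $q=1$, so $\mathbb{G}$ trivial), $\on{Rep}(\Lambda^q,\alpha)$ is an affine space minus a hypersurface and has nontrivial $H^1$; its cohomology is strictly larger than $H^*(B\mathbb{G})=\mathbb{Q}$. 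Without this premise, the chain $H^*(B\mathbb{G})\to H^*_{\mathbb{G}}(\on{Rep})\to H^*_{\mathbb{G}}(\on{Rep}^{\text{ss}})$ yields no control on the image of the tautological classes even if Kirwan surjectivity held. (Note also that if both of your premises did hold, you would conclude surjectivity of $H^*(B\mathbb{G})\to H^*(\mathcal{M}_\theta^q(\alpha)^{\on{s}})$, which is false in general, as the paper emphasizes: the target is not pure but the source is.) Second, the equivariant perfection of the HKKN stratification for the singular multiplicative-moment-map level set is, as you acknowledge, an open obstacle, and you give no argument to close it. The paper circumvents both issues entirely: it works on the \emph{smooth} stable locus and a \emph{smooth} compactification, so that Markman's lemma and Deligne's weight theory apply cleanly, and never needs to analyze the equivariant cohomology of the full (singular, non-contractible) representation scheme.

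Your final step, $PH^*(\mathcal{M}_\theta^q(\alpha)^{\on{s}})\to PH^*(U)$ surjective for connected open $U$, is essentially correct and is in fact what Proposition 6.6 of the paper provides (via the independence of the image of $H^*$ of a smooth compactification under Weak Factorization, plus the Kresch--Vistoli covering trick to reduce from Deligne--Mumford stacks to schemes). But this step alone cannot repair the gaps above.
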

\noindent
In light of Theorem 1.2 of \cite{McNKirwan}, Theorem \ref{main thm} is nicely consonant with Hausel's ``purity conjecture'' (cf. \cite{Hausel} as well as \cite[Theorem~1.3.1 and Corollary~1.3.2]{HLV}, and the discussion around Conjecture 1.1.3 of \cite{HWW}), which predicts that when 
$\mathcal{M}_{\theta}^q(\alpha) = \mathcal{M}_{\theta}^q(\alpha)^{\on{s}}$, one should have an isomorphism
$PH^*(\mathcal{M}_{\theta}^q(\alpha)^{\on{s}}) \cong H^*\big(\mathfrak{M}_{\theta}(\alpha)^{\on{s}},\mathbb{Q}\big)$, where $\mathfrak{M}_{\theta}(\alpha)^{\on{s}}$ denotes the corresponding Nakajima quiver variety.

In the special case in which $Q$ is a quiver with a single node and $g\geq 1$ loops, the dimension vector is $\alpha=n$, and $q\in \mathbb{C}^\times$ is a primitive $n$th root of unity, the multiplicative quiver variety $\mathcal{M}_{\theta}^q(\alpha)$ is identified with the $GL_n$-character variety $\on{Char}(\Sigma_g, GL_n, q\on{Id})$ of a genus $g$ surface with a single puncture with residue $q\on{Id}$, sometimes called a  genus $g$ {\em twisted character variety} \cite{HR2}.
We obtain:
\begin{corollary}\label{character var}
The pure cohomology $PH^*\big(\on{Char}(\Sigma_g, GL_n, q\on{Id})\big)$ is generated by tautological classes.
\end{corollary}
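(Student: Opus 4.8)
The plan is to obtain Corollary~\ref{character var} as an immediate consequence of Theorem~\ref{main thm}(2), applied to the quiver $Q$ with a single vertex and $g$ loops, dimension vector $\alpha = n$, and $q$ a primitive $n$th root of unity. As recalled above, in this situation the Crawley-Boevey--Shaw construction gives an isomorphism of varieties $\mathcal{M}_{\theta}^q(n) \cong \on{Char}(\Sigma_g, GL_n, q\on{Id})$; after the standard change of coordinates, a $\theta$-semistable representation of $\Lambda^q$ with dimension vector $n$ corresponds to a tuple $(A_1, B_1, \dots, A_g, B_g)$ of invertible $n\times n$ matrices satisfying $\prod_i [A_i, B_i] = q\cdot\on{Id}$, taken up to simultaneous conjugation by $\mathbb{G} = GL(n)/\Delta(\Gm) = PGL_n$.

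First I would check that the hypotheses of Theorem~\ref{main thm}(2) hold, taking $\theta = 0$. If $W \subseteq \C^n$ is a subspace invariant under all the arrow operators of such a representation, then restricting the moment map relation to $W$ and taking determinants forces $q^{\dim W} = 1$; since $q$ is a primitive $n$th root of unity, this gives $\dim W \in \{0, n\}$. Hence every representation is irreducible, so $\mathcal{M}_0^q(n) = \mathcal{M}_0^q(n)^{\on{s}}$, the automorphism group of each representation is just the scalars, and the $\mathbb{G} = PGL_n$-action on $\on{Rep}(\Lambda^q, n)^{\on{s}}$ is free; in particular the GIT quotient is free, so the classifying map $\mathcal{M}_0^q(n) \to B\mathbb{G}$ of Theorem~\ref{main thm} is defined. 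Connectedness of $\on{Char}(\Sigma_g, GL_n, q\on{Id})$ is known (see \cite{HR2, HLV}). Theorem~\ref{main thm}(2) then says that the pullback $H^*(B\mathbb{G}, \Q) \to H^*\big(\on{Char}(\Sigma_g, GL_n, q\on{Id}), \Q\big)$ surjects onto the pure subalgebra $PH^*$.

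It remains to recognize the image of this pullback as being generated by tautological classes. The map $\mathcal{M}_0^q(n) \to B\mathbb{G} = BPGL_n$ classifies the principal $PGL_n$-bundle $\on{Rep}(\Lambda^q, n)^{\on{s}} \to \mathcal{M}_0^q(n)$, whose associated adjoint bundle $\on{Rep}^{\on{s}} \times_{PGL_n} \gl_n$ is the adjoint bundle of the twisted character variety; equivalently, it is the projectivization of the rank-$n$ bundle obtained by restricting the universal local system on $\mathcal{M}_0^q(n) \times (\Sigma_g \setminus \{\ast\})$ to $\mathcal{M}_0^q(n) \times \{x_0\}$ for a base point $x_0$. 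Thus the image of $H^*(BPGL_n, \Q)$ is generated by the characteristic classes of the adjoint bundle, which are by definition among the tautological classes of $\on{Char}(\Sigma_g, GL_n, q\on{Id})$. Since this image already generates $PH^*$ by the previous paragraph, and the (full) tautological subalgebra contains it, $PH^*\big(\on{Char}(\Sigma_g, GL_n, q\on{Id})\big)$ is generated by tautological classes.

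I do not expect a genuine obstacle here: all of the substantive work is contained in Theorem~\ref{main thm}. The only points requiring attention are the elementary verifications of its hypotheses in this case — irreducibility (hence equality of stable and semistable loci) via the determinant argument, freeness of the quotient, and connectedness of the twisted character variety, the last being standard — together with the essentially definitional translation between the moduli-theoretic characteristic classes, namely the images of $H^*(B\mathbb{G}, \Q)$, and the tautological classes as they appear in the character-variety literature.
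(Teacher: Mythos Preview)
Your proposal is correct and matches the paper's approach: the corollary is stated as an immediate consequence of Theorem~\ref{main thm}(2) applied to the one-vertex, $g$-loop quiver with $\alpha=n$ and $q$ a primitive $n$th root of unity, using the identification of $\mathcal{M}_\theta^q(n)$ with $\on{Char}(\Sigma_g, GL_n, q\on{Id})$ recalled just before the corollary and the fact (cited from \cite{CBS}) that all representations are stable in this case. The paper does not spell out the verifications (irreducibility via determinants, connectedness, translation to tautological classes) any more explicitly than you do; these are treated as standard.
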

Corollary \ref{character var} has already appeared in \cite{Shende}, where it was deduced, via the non-abelian Hodge theorem, from Markman's theorem \cite{Markman} that the cohomology of the moduli space of $GL_n$-Higgs bundles of degree $1$ on a smooth projective genus $g$ curve is generated by tautological classes.  A novelty of our result, compared to \cite{Shende}, is that we avoid invoking non-abelian Hodge theory:  
 instead, we deduce Corollary \ref{character var} (as well as Theorem \ref{main thm}) via a more direct and concrete method that invokes only basic facts of ordinary mixed Hodge theory as in \cite{Deligne}.\footnote{On the other hand, a major source of interest in twisted character varieties lies \cite{HR2} in non-abelian Hodge theory, specifically the $P=W$ conjecture.}.

Theorem \ref{main thm} has the following slightly different but equivalent formulation.  Choose a subgroup $\bS\subset \prod_i GL(\alpha_i)$ whose projection $\bS\rightarrow \mathbb{G}$ is a finite covering.  Then one can form the stack quotient 
$\on{Rep}(\Lambda^q,\alpha)^{\theta\on{-s}}/\bS$, which comes with a morphism $\pi: \on{Rep}(\Lambda^q,\alpha)^{\theta\on{-s}}/\bS\rightarrow  \mathcal{M}_{\theta}^q(\alpha)^{\on{s}}$ that is a gerbe, in fact a torsor over the commutative group stack $BH$ where $H=\ker(\mathbb{S}\rightarrow\mathbb{G})$.  We have an isomorphism 
$H^*(B\bS,\mathbb{Q})\cong H^*(B\mathbb{G},\mathbb{Q})$ and $\pi$ induces an isomorphism
$H^*\big(\mathcal{M}_{\theta}^q(\alpha)^{\on{s}},\mathbb{Q}\big)\cong H^*\big(\on{Rep}(\Lambda^q,\alpha)^{\theta\on{-s}}/\bS,\mathbb{Q}\big)$.  Thus Theorem \ref{main thm} can be restated as:
\begin{thm}\label{stack main thm}
For each connected open substack 
$U\subseteq \on{Rep}(\Lambda^q,\alpha)^{\theta\on{-s}}/\mathbb{S}$, 
the pure cohomology $PH^*(U)$ is generated as a $\mathbb{Q}$-algebra by the Chern classes of tautological bundles
$\on{Rep}(\Lambda^q,\alpha)^{\theta\on{-s}}\times_{\bS} V$ associated to finite-dimensional representations $V$ of $\bS$.  
\end{thm}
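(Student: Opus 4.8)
The plan is to reduce, via a weight estimate, to proving the statement for a single connected component $N$ of the full stable locus, and then to transport ``generated by tautological classes'' from a smooth ambient stack onto $N$ through the Crawley--Boevey--Shaw group-valued moment map, the two technical engines being the Vanishing Theorem (which makes Kirwan surjectivity work on the ambient) and the Exactness Theorem (which controls the pure cohomology of a moment-map fibre). For \emph{Step 1}, note first that $\cX:=\on{Rep}(\Lambda^q,\alpha)^{\theta\on{-s}}/\bS$ is a smooth Deligne--Mumford stack: on the $\theta$-stable locus the group-valued moment map is a submersion onto a neighbourhood of its target and $\bS$ acts with finite stabilizers, so $\on{Rep}(\Lambda^q,\alpha)^{\theta\on{-s}}$ is smooth and the quotient carries a functorial mixed Hodge structure on its rational cohomology. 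A connected open substack $U$ lies in one connected component $N$ of $\cX$, and the open immersion $U\hookrightarrow N$ induces a morphism of mixed Hodge structures $H^*(N,\Q)\to H^*(U,\Q)$ whose cokernel in degree $k$ embeds into the cohomology with supports $H^{k+1}_{N\setminus U}(N,\Q)$. Because $N$ is smooth, this group has weights $\geq k+1$ (a local statement, reduced by excision and resolution of singularities of $N\setminus U$ to a normal crossings divisor in a smooth variety, as in \cite{Deligne}); exactness of the functor $W_k$ on mixed Hodge structures then forces $PH^k(U)$ to be the image of $PH^k(N)$. Since a tautological bundle on $N$ restricts to a tautological bundle on $U$, it suffices to prove the theorem for $U=N$.

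For \emph{Step 2}, recall from \cite{CBS} that $\on{Rep}(\Lambda^q,\alpha)=\Phi^{-1}(q)$ for a group-valued moment map $\Phi\colon\cR\to\mathsf{G}$, where $\cR$ is a smooth $\bS$-stable open subvariety of the affine space $\on{Rep}(\wb Q,\alpha)$ of representations of the doubled quiver and $\mathsf{G}$ is the relevant (sub)group of $\prod_i GL(\alpha_i)$, acting on itself by conjugation. Passing to $\theta$-stable loci and taking stack quotients, $N$ is an open-and-closed substack of the closed substack $\Phi^{-1}(q)^{\theta\on{-s}}/\bS$ of the smooth Deligne--Mumford stack $\mathfrak{R}:=\cR^{\theta\on{-s}}/\bS$; since $\Phi$ is a submersion along its fibre over $q$ on the stable locus, $N\hookrightarrow\mathfrak{R}$ is a regular closed immersion whose conormal bundle is the restriction of the tautological bundle $\cR^{\theta\on{-s}}\times_\bS\fg$ (with $\fg=\operatorname{Lie}\mathsf{G}$ and the adjoint action, using that $q$ is central).

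For \emph{Step 3}, one shows that $H^*(B\bS,\Q)\to H^*(\mathfrak{R},\Q)$ is surjective onto $PH^*(\mathfrak{R})$. Since $\on{Rep}(\wb Q,\alpha)$ is $\bS$-equivariantly contractible, the Kirwan--Ness stratification of the $\theta$-unstable locus of $\cR$ and the splitting of the attendant Gysin sequences — the required vanishing of the equivariant cohomology supported on the unstable strata being precisely (the precise form of) the Vanishing Theorem, established following the strategy of \cite{McNKirwan} — reduce the assertion to showing that $H^*_\bS(\cR,\Q)$, the equivariant cohomology of the complement in affine space of the determinantal loci where the maps $1+aa^*$ built into $\Lambda^q$ fail to be invertible, has pure part generated by tautological classes; this last point is handled using the iterated description of $\cR$ in \cite{CBS} together with Gysin/Mayer--Vietoris bookkeeping and the fact that equivariant degeneracy loci carry tautologically generated pure cohomology.

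It remains, in \emph{Step 4}, to show that the restriction $H^*(\mathfrak{R},\Q)\to H^*(N,\Q)$ carries $PH^*(\mathfrak{R})$ onto $PH^*(N)$; as the restriction of a tautological class is again a tautological class, this finishes the proof. \emph{This is the hard step, and it is the reason only the pure cohomology is reached.} On the $\theta$-stable locus $\Phi$ is equidimensional, so $N$ is cut out in $\mathfrak{R}$ in the expected codimension $\dim\mathsf{G}$ by a section of the tautological bundle $\cR^{\theta\on{-s}}\times_\bS\fg$, and an Exactness Theorem — a suitable incarnation of the statement that the Koszul-type complex attached to the moment map is, after passage to pure cohomology, a resolution — yields $PH^*(N)$ as a subquotient of $PH^*(\mathfrak{R})$ tensored with the exterior algebra of a tautological bundle, hence as generated by tautological classes. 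The subtlety is that $\on{Rep}(\Lambda^q,\alpha)$ is genuinely singular away from the stable locus and $\Phi$ need not be globally flat, so this Koszul exactness can hold only after discarding high-weight contributions; making this precise — via the weight spectral sequence of \cite{Deligne} applied to a hyperresolution of the fibre, together with the stratification of $\on{Rep}(\Lambda^q,\alpha)$ by singularity type and the Vanishing Theorem controlling the unstable strata — is the technical heart of the argument, and is where I expect essentially all of the difficulty to lie.
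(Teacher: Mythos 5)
Your proposed route is genuinely different from the paper's, and while Steps 1 and 2 are sound and in the right spirit, Steps 3 and 4 invoke two results --- a ``Vanishing Theorem'' and an ``Exactness Theorem'' for the Crawley--Boevey--Shaw group-valued moment map --- that are not established anywhere and whose proofs would constitute essentially the entire difficulty of the theorem. In Step 3 you need Kirwan surjectivity ($H^*(B\bS,\mathbb{Q})\twoheadrightarrow PH^*(\mathfrak{R})$) for the GIT quotient of a non-proper open subvariety $\cR$ of affine space, and separately you need tautological generation of the pure equivariant cohomology of that complement-of-degeneracy-loci; you offer only ``Gysin/Mayer--Vietoris bookkeeping'' and a claimed ``fact'' about equivariant degeneracy loci, neither of which is a routine consequence of a Kirwan--Ness stratification in the non-proper setting. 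In Step 4 you need a Koszul-type complex for the fibre $\Phi^{-1}(q)$ that becomes exact ``after passage to pure cohomology''; as you yourself concede, $\on{Rep}(\Lambda^q,\alpha)$ is singular away from the stable locus and $\Phi$ is not flat, so this exactness is far from automatic, and the spectral-sequence-and-hyperresolution mechanism you gesture at is not an argument but a list of tools. The key missing idea is thus a concrete device that actually produces the surjection $PH^*(\text{ambient})\to PH^*(N)$.

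The paper's proof sidesteps both obstructions by a completely different strategy, closer to \cite{McNKirwan} and to Markman. It constructs a smooth projective Deligne--Mumford compactification of $\on{Rep}(\Lambda^q,\alpha)^{\theta\text{-s}}/\bS$ via the homogenized multiplicative preprojective algebra $\sA$ and the graded tripled quiver; it then builds an explicit three-term Ext-complex $C$ of tautological bundles on the product of the open stack with this compactification, and shows by a scheme-theoretic support analysis (using dual numbers and the stability hypotheses) that $C$ satisfies the hypotheses of Markman's Chern-class formula, so that $c_m(C)=[\Gamma]$ for $\Gamma$ the graph of the embedding. A K\"unneth-components argument (the analogue of Proposition 2.1 of \cite{McNKirwan}) then shows that the image of $H^*(\overline{\mathcal{M}})\to H^*(U)$ is spanned by polynomials in Chern classes of tautological bundles, and a Deligne-style weight argument for global-quotient quasi-projective Deligne--Mumford stacks identifies this image with $PH^*(U)$. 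No vanishing or Koszul-exactness theorem is invoked anywhere; the only Hodge-theoretic input is that a smooth proper Deligne--Mumford stack has pure cohomology. If you want to carry out your moment-map strategy you would need to prove your Steps 3 and 4 essentially from scratch, whereas the paper's approach converts the problem into a computation with a concrete perfect complex.
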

It is Theorem \ref{stack main thm} that we prove directly: the tautological bundles $\on{Rep}(\Lambda^q,\alpha)^{\theta\on{-s}}\times_{\bS} V$ that appear naturally and geometrically in our proof do not themselves descend to the multiplicative quiver variety in general, so it is more convenient to work  on the Deligne-Mumford stack $\on{Rep}(\Lambda^q,\alpha)^{\theta\on{-s}}/\bS$.

Unlike the situation of quiver varieties in \cite{McNKirwan}, we know of no obvious generalizations of Theorems \ref{main thm} and \ref{stack main thm} to other even-oriented cohomology theories (such as topological $K$-theory or elliptic cohomology).  However, we do obtain the following analogue of Theorem 1.6 of \cite{McNKirwan}.

\begin{thm}\label{derived cat}
Suppose there is some vertex $i\in I$ for which the dimension vector $\alpha$ satisfies $\alpha_i=1$, and let $\mathcal{M} = \mathcal{M}_{\theta}^q(\alpha)^{\on{s}}$. 
Let $D(\mathcal{M})$ denote the unbounded quasicoherent derived category of $\mathcal{M}$, and $D^b_{\on{coh}}(\mathcal{M})$ its bounded coherent subcategory.
\begin{enumerate}
\item The category $D(\mathcal{M})$ is generated by tautological bundles.  
\item There is a finite list of tautological bundles from which every object of  $D^b_{\on{coh}}(\mathcal{M})$ is obtained by finitely many applications of (i) direct sum, (ii) cohomological shift, and (iii) cone.
\end{enumerate}
\end{thm}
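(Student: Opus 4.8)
The plan is to adapt the method of \cite[Theorem~1.6]{McNKirwan} to the multiplicative setting; the genuinely new input consists of structural facts about the representation variety, and the hypothesis $\alpha_i=1$ enters only to pass from a statement on a quotient stack to one on the variety $\mathcal{M}$ itself. First I record the structure I will need. Set $X=\on{Rep}(\Lambda^q,\alpha)$. By \cite{CBS}, $X$ is a closed subvariety of the $\mathbb{G}$-stable open locus $Y^\circ\subseteq\on{Rep}(\overline{Q},\alpha)$ on which all the endomorphisms $1+aa^*$ are invertible, cut out by the multiplicative preprojective relations; the locus $Y^\circ$ is the nonvanishing set of the single $\mathbb{G}$-invariant function $\prod_a\det(1+aa^*)$, hence affine, the ambient $\on{Rep}(\overline{Q},\alpha)=T^*\on{Rep}(Q,\alpha)$ is a \emph{quasi-symmetric} $\mathbb{G}$-representation, and the relations realize $X$ as a complete intersection in $Y^\circ$ (at least near the stable locus), with conormal bundle a tautological bundle. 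The $\theta$-stable locus $X^{\on{s}}=\on{Rep}(\Lambda^q,\alpha)^{\theta\on{-s}}$ is smooth, and $\mathbb{G}$ acts freely on it. Finally, since $\alpha_i=1$ one may take $\mathbb{S}=\prod_{j\ne i}GL(\alpha_j)$, and then $\mathbb{S}\to\mathbb{G}$ is an isomorphism (the $i$-th factor $\Gm$ splits off the diagonal $\Gm$), so $H=\ker(\mathbb{S}\to\mathbb{G})$ is trivial, the gerbe $\on{Rep}(\Lambda^q,\alpha)^{\theta\on{-s}}/\mathbb{S}\to\mathcal{M}$ is an isomorphism, and $\mathcal{M}\cong[X^{\on{s}}/\mathbb{G}]$ with the tautological bundles on $\mathcal{M}$ being exactly the associated bundles $X^{\on{s}}\times_{\mathbb{G}}V$ of finite-dimensional $\mathbb{G}$-representations $V$.

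Part (i) is then soft. Since $X$ is affine and $\mathbb{G}$ is reductive, the morphism $[X/\mathbb{G}]\to B\mathbb{G}$ is affine; $D(B\mathbb{G})$ is generated as a localizing subcategory by the finite-dimensional representations (in characteristic zero $\on{Rep}(\mathbb{G})$ is semisimple), so pulling back along an affine morphism --- whose pushforward is conservative and preserves coproducts --- shows that $D([X/\mathbb{G}])$ is generated by the tautological bundles $\cO\otimes V$. The open immersion $j\colon\mathcal{M}=[X^{\on{s}}/\mathbb{G}]\hookrightarrow[X/\mathbb{G}]$ has $j^*$ a Verdier localization with fully faithful right adjoint $j_*$, so $j^*$ preserves coproducts and carries a generating set to a generating set; hence $D(\mathcal{M})$ is generated by the tautological bundles $X^{\on{s}}\times_{\mathbb{G}}V$, proving (i). Since $\mathcal{M}$ is smooth, these bundles are compact objects, so by Neeman's theorem $D^b_{\on{coh}}(\mathcal{M})=\on{Perf}(\mathcal{M})$ is the thick subcategory they generate --- a fact used in part (ii).

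For part (ii) one must cut the infinite family $\{\cO\otimes V\}$ down to a finite one. Here I would apply the window (``derived Kirwan'') machinery --- in the form of \cite{McNKirwan}, or via Halpern--Leistner together with the magic-window results of \v{S}penko--Van den Bergh --- to the \emph{smooth} quotient stack $[Y^\circ/\mathbb{G}]$ with its $\theta$-stratification. Because $\on{Rep}(\overline{Q},\alpha)$ is quasi-symmetric there is a \emph{finite} list $V_1,\dots,V_r$ of $\mathbb{G}$-representations (those with highest weight in a fixed bounded window polytope) such that the triangulated subcategory $\langle\cO\otimes V_1,\dots,\cO\otimes V_r\rangle\subseteq D^b_{\on{coh}}([Y^\circ/\mathbb{G}])$ restricts to an equivalence onto $D^b_{\on{coh}}(\mathcal{N})$, where $\mathcal{N}:=[Y^{\circ,\theta\on{-s}}/\mathbb{G}]$; consequently $D^b_{\on{coh}}(\mathcal{N})$ is generated, as a triangulated category, by the tautological bundles $Y^{\circ,\theta\on{-s}}\times_{\mathbb{G}}V_k$. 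Now pull back along the closed immersion $\iota\colon\mathcal{M}\hookrightarrow\mathcal{N}$ (legitimate since $X^{\on{s}}=X\cap Y^{\circ,\theta\on{-s}}$ is closed in $Y^{\circ,\theta\on{-s}}$): as $\iota^*$ is triangulated and sends $Y^{\circ,\theta\on{-s}}\times_{\mathbb{G}}V$ to $X^{\on{s}}\times_{\mathbb{G}}V$, and every tautological bundle on $\mathcal{M}$ arises this way, each lies in the triangulated subcategory generated by the $\mathcal{V}_k:=X^{\on{s}}\times_{\mathbb{G}}V_k$. Combined with the previous sentence, $D^b_{\on{coh}}(\mathcal{M})=\langle\mathcal{V}_1,\dots,\mathcal{V}_r\rangle$, which --- unwinding into the operations (i) direct sum, (ii) shift, (iii) cone, and using the explicit Koszul/\v{C}ech form of the window resolution if one wants the statement literally without passing to direct summands --- is (ii).

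The step I expect to be the main obstacle is making this window argument rigorous in the present nonlinear situation. One must verify that the window subcategory for the smooth ambient is genuinely finite --- using quasi-symmetry of $\on{Rep}(\overline{Q},\alpha)$ and the fact that $\theta$-stratifications and their associated windows restrict along the open substack $[Y^\circ/\mathbb{G}]\subseteq[T^*\on{Rep}(Q,\alpha)/\mathbb{G}]$, so that working on $Y^\circ$ costs nothing --- and, more seriously, that the relevant identifications survive restriction along the complete intersection $X\subseteq Y^\circ$ cut out by the multiplicative preprojective relations. This last point is precisely where $\on{Rep}(\Lambda^q,\alpha)$ departs from a Nakajima quiver variety; it should be controlled, as in \cite{McNKirwan}, by the complete-intersection (Koszul) structure of $X$ in $Y^\circ$ together with the smoothness of $X^{\on{s}}$, which forces $D^b_{\on{coh}}(\mathcal{M})=\on{Perf}(\mathcal{M})$ and makes the reduction to a finite list of tautological bundles go through.
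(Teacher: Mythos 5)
Your argument for part~(1) is a clean soft argument (affine cover $[X/\mathbb{G}]\to B\mathbb{G}$, then Verdier localization to the stable open substack) and is perfectly fine, if a bit different in emphasis from the paper's. But your argument for part~(2) takes a genuinely different route from the paper's, and the route has real gaps.

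The paper does not use windows at all. It reuses the Ext complex \eqref{eq:perfect-complex} on $\mathcal{M}\times\mathcal{M}$ that was already built to prove the cohomological theorem. The hypothesis $\alpha_{i_0}=1$ enters in a very concrete way: the $i_0$-component of the end terms $L(V_0,W_0)$ and $L(V_0,W_{2g})$ is the rank-one trivial bundle, and after splitting those summands off, the truncated complex has no cohomology at the ends and a locally free middle cohomology $\mathcal{H}$ of rank exactly $m=\on{codim}(\Gamma)$. The leftover map $k=\Hom(V_{0,i_0},W_{0,i_0})\to E(V_0,W_1)$ gives a section $s$ of $\mathcal{H}$ with scheme-theoretic zero locus $Z(s)=\Gamma$, so the Koszul complex of $s$ is an explicit finite resolution of $\mathcal{O}_\Gamma$ by tautological bundles, and the Fourier--Mukai argument of \cite[Theorem~1.6]{McNKirwan} is then copied verbatim. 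This construction is precisely what produces the no-retract form of the statement, because one has an honest resolution, not just thick generation.

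Your window proposal has two substantive problems. First, the Spenko--Van den Bergh / Halpern-Leistner--Sam magic-window results that deliver a \emph{finite} generating list of tautological bundles are theorems about $[W/\mathbb{G}]$ for $W$ a linear quasi-symmetric $\mathbb{G}$-representation. You need such a statement for $[Y^{\circ,\theta\text{-}\mathrm{s}}/\mathbb{G}]$, where $Y^\circ\subsetneq W$ is a proper open subvariety; the window \emph{restriction} theorem passes to opens, but finite generation by $\mathcal{O}\otimes V_k$ does not obviously survive the open immersion, since objects on $\mathcal{N}$ need not extend to $[W/\mathbb{G}]$. Second, and more importantly for the exact form of the theorem, even granting a finite window list, window equivalences give thick (idempotent-complete) generation, and the paper goes out of its way to point out that part~(2) forbids passing to retracts. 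Your parenthetical ``using the explicit Koszul/\v{C}ech form of the window resolution'' is exactly where the missing argument would have to go, and it is not supplied; in the paper this is exactly the role played by the Koszul complex of the section $s$. So your proposal has the right structural outline but replaces the one nontrivial construction with machinery that does not obviously give what is needed, whereas the paper's mechanism is a bare-hands resolution of the diagonal tailored to the $\alpha_{i_0}=1$ case.
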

As for the analogous result in \cite{McNKirwan}, we emphasize that Theorem \ref{derived cat}(2) is {\em not} simply a formal consequence of Theorem \ref{derived cat}(1), since we do {\em not} include taking direct summands (i.e., retracts) among the operations (i)-(iii).  It would be interesting to know generators for $D^b_{\on{coh}}(\mathcal{M})$ for more general dimension vectors $\alpha$ than in Theorem \ref{derived cat}.

\subsection{Method of Proof}
The proof of Theorem \ref{stack main thm} is broadly similar to the proof used in \cite{McNKirwan} to establish that tautological classes generate the cohomology of Nakajima quiver varieties.  

A main part of the proof consists in producing a suitable modular compactification of the multiplicative quiver variety (or rather its Deligne-Mumford stack analogue).  One major difference from the Nakajima quiver variety case arises already at this stage: one frequently relies on $q$ being an appropriate tuple of primitive roots of unity to deduce that $\mathcal{M}_\theta^q(\alpha)$ parameterizes only stable representations, independently of the choice of $\theta$; whereas in \cite{McNKirwan}, we assumed, without significant loss of generality, that $\theta$ was a generic stability condition.  We note that such a genericity assumption here would exclude the possibility of applications to the character variety $\on{Char}(\Sigma_g, GL_n, q\on{Id})$; hence we avoid it.  Instead we identify a compactification by a ``projective Artin stack'' $\overline{\mathcal{M}}$, a quotient of a quasiprojective scheme by a reductive group whose coarse moduli space is a projective scheme.  Known techniques \cite{Kirwan, Edidin} allow us to replace the Artin stack compactification by a projective Deligne-Mumford stack at no cost to the validity of our approach.  

The second stage is to identify a complex on $\mathcal{M}_\theta^q(\alpha)\times\overline{\mathcal{M}}$ that, roughly speaking, resolves the graph of the embedding
$\mathcal{M}_\theta^q(\alpha)\hookrightarrow \overline{\mathcal{M}}$.  Again, while this is morally similar to \cite{McNKirwan}, the actual construction and proofs are more complicated and subtle.  This is essentially because our compactification of the Nakajima quiver variety relied on a graded 3-Calabi-Yau algebra, whereas the compactification of $\mathcal{M}_\theta^q(\alpha)$ uses an algebra, denoted by $\sA$ in the body of this paper, that may (conjecturally) be what one might call a ``relative $2g$-Koszul algebra'' in most cases but (as far as we know) is not known to be so.  Fortunately it turns out that we can proceed as if the algebra $\sA$ were known to have certain desired properties, carry out some constructions, and check by hand that the resulting complex behaves as hoped.  Unfortunately, in the generality in which we work here (and again unlike \cite{McNKirwan}), it seems one cannot expect the complex to actually provide a resolution of the structure sheaf of the graph of the embedding: instead, we rely on work of Markman \cite{Markman} to show that an appropriate Chern class of the complex we built is the Poincar\'e dual of the fundamental class of the graph. 

The final step is to deduce the theorem via usual integral transform arguments.  In \cite{McNKirwan}, we used Nakajima's result that the (integral) cohomology of a quiver variety is generated by algebraic cycles, hence is surjected onto by the cohomology of any compactification.  Such an assertion is not true of the multiplicative quiver varieties $\mathcal{M}_\theta^q(\alpha)$.  Instead, what is always true is that the cohomology of any reasonable smooth compactification---which is always Hodge-theoretically pure---surjects onto the pure part of the cohomology of any open subset.  This yields the assertion of the theorem, which in any case would be the best possible result, given that the cohomology $H^*(B\mathbb{G},\mathbb{Q})$ is pure; but its Hodge-theoretic nature also necessitates working with rational cohomology. 

 It is an interesting question to characterize the image of $H^*(B\mathbb{G},\mathbb{Z})$ in $H^*\big(\mathcal{M}_\theta^q(\alpha),\mathbb{Z}\big)$.  

\subsection{Acknowledgments}
We are grateful to Gwyn Bellamy, Ben Davison, Tam\'as Hausel, and Travis Schedler for helpful conversations, and to Donu Arapura and Ajneet Dhillon for help with references.
The first author was supported by EPSRC programme grant EI/I033343/1 and a Fisher Visiting Professorship at the University of Illinois at Urbana-Champaign.  The second author was supported by NSF grants DMS-1502125 and DMS-1802094 and a Simons Foundation fellowship.   The authors are also grateful to the Department of Mathematics of the University of Notre Dame for its hospitality 
during part of the preparation of this paper.

\subsection{Notation}
Throughout, $k$ denotes a field of characteristic $0$.  In Sections \ref{sec:intro} and \ref{sec:coh}, $k=\mathbb{C}$.

\section{Quivers and Multiplicative Preprojective Algebras}
\subsection{Truncations of Graded Algebras}
We will frequently use certain ``truncations'' of a $\mathbb{Z}_{\geq 0}$-graded algebra $A$ in what follows.  For a $\mathbb{Z}$-graded vector space $V$ and integer $n$, we write $V_{\geq n} = \oplus_{m\geq n} V_m$, a vector space graded by $\{n, n+1, \dots\}$.  We note the vector space injection $V_{\geq n}\rightarrow V$ that is the identity on the $m$th graded piece for $m\geq n$.  
\begin{defn}
For a 
$\mathbb{Z}_{\geq 0}$-graded algebra $A$ and each $N\geq 0$, we define:
$A\intN := A/A_{\geq N+1}.$
\end{defn}

\subsection{Quivers, Doubles, and Triples}
Let $Q = (I,\Omega)$ be a finite quiver, so that $s, t: \Omega\rightrightarrows I$ are the source and target maps: for $a\in\Omega$ we have $\xymatrix{\overset{s(a)}{\bullet}\ar[r]^{a} & \overset{t(a)}{\bullet}}$. 

The {\em double} of $Q$ is a quiver $\Qdbl = (I,H = \Omega\sqcup\overline{\Omega})$ with the same vertex set $I$ as for $Q$ and the set of arrows
$H = \Omega\sqcup\overline{\Omega}$ where $\Omega$ is the arrow set of $Q$ and $\overline{\Omega}$ is a set equipped with a bijection to $\Omega$, written $\Omega\ni a \leftrightarrow a^*\in \overline{\Omega}$.  We extend this bijection canonically to an involution on $H = \Omega\sqcup\overline{\Omega}$, still written $a\mapsto a^*$, and decree $s(a^*) = t(a), t(a^*)=s(a)$.  
 For each arrow $a\in H$ we write
\bd
\epsilon(a) = \begin{cases} 1 & \text{if}\hspace{.5em} a\in\Omega,\\ -1 &  \text{if}\hspace{.5em} a\in\overline{\Omega}.\end{cases}
\ed

Fix an integer $N\geq 1$.  The {\em graded tripled quiver} $Q^{\on{gtr}}$ associated to $Q$ (cf. Section 4 of \cite{McNKirwan}) is a quiver defined as follows.
We give $\Qgtr$ the vertex set $I^{\on{gtr}} = I\times [0, N]$ where $I$ is the vertex set of $Q$.  If $\Omega$ is the edge set of $Q$ and $H = \Omega\sqcup \overline{\Omega}$ the associated set of pairs of an edge together with an orientation, we give $\Qgtr$ the arrow set
\bd
\big(H\times [0, N-1]\big) \sqcup \big(I\times [0, N-1]).   \hspace{3em} \text{Thus,}
\ed
\begin{enumerate}
\item for each $h\in H$, $n\in [0, N-1]$  we have arrows $(h,n)$ with $\xymatrix{\overset{(s(h),n)}{\bullet} \ar[r]^{(h,n)} & \overset{(t(h),n+1)}{\bullet}}$, i.e.
\bd
s(h,n) = (s(h), n) \hspace{1em} \text{and} \hspace{1em} t(h,n) = (t(h), n+1);
\ed
\item for each $i\in I$, $n\in [0, N-1]$ we have arrows $t_{(i,n)}$  with $\xymatrix{\overset{(i,n)}{\bullet} \ar[r]^{t_{(i,n)}} & \overset{(i,n+1)}{\bullet}}$, i.e.
\bd
s(t_{i,n}) = (i,n) \hspace{1em} \text{and} \hspace{1em} t(t_{(i,n)}) = (i,n+1).
\ed
\end{enumerate}
More discussion can be found in \cite{McNKirwan}.  

\subsection{Path Algebras}\label{sec:path algebras}
Let $S = \bigoplus_i S e_i$ be a semisimple algebra with orthogonal system of idempotents $\{e_i\}$.  Suppose $A$ is an algebra with homomorphism $S\rightarrow A$.  
We say that $x\in A$ {\em has diagonal Peirce decomposition} if $\ds x\in \bigoplus_{i\in I} e_iAe_i$, or equivalently if it lies in the centralizer $Z_{A}(S)$.

Given a quiver $Q$,
We let $kQ$ denote the path algebra of the quiver.   Thus, we have a finite-dimensional semisimple $k$-algebra $S = \bigoplus_{i\in I} ke_i$ with idempotents $e_i$ labelled by the vertices $i\in I$.  We define an $S$-bimodule $B = B(Q)$, with $k$-basis labelled by the arrows, and ``arrows written left-to-right,'' so $e_iae_j =0$ unless $i = s(a), j=t(a)$, and so that $e_{s(a)}ae_{t(a)} = a$.  Then $kQ = T_S(B(Q))$ (the tensor algebra).

It is natural to grade the path algebra $kQ$ of any quiver $Q = (I,H)$---for example, $k\Qdbl$---by taking the semisimple algebra $S$ to lie in degree $0$ and the arrows $h\in H$ to lie in degree $1$: this is the standard nonnegative grading on the tensor algebra.  The algebra $k\Qdbl\langle t\rangle$ thus is naturally bi-graded, hence has total grading with $\deg(t)=1$.  
We can also grade $k\Qgtr$ by putting the semisimple algebra $\ds\bigoplus_{i\in I^{\on{gtr}}} ke_i$
 in degree $0$ and the arrows in degree $1$.  We obtain a graded algebra homomorphism
\bd
k\Qdbl\langle t\rangle\longrightarrow k\Qgtr \hspace{3em} \text{by taking }
\ed
\bd
e_i\mapsto \sum_n e_{(i,n)}, \;\; i\in I, 
\hspace{2em}
h\mapsto \sum_n (h,n), \;\; h\in H,
\hspace{2em}
t\mapsto \sum_{(i,n)}t_{(i,n)}.
\ed
The graded algebra $k\Qgtr$ has the property $k\Qgtr_{\geq N+1} = 0$, so we obtain a homomorphism
\begin{equation}\label{dbl to gtr}
k\Qdbl\langle t\rangle\intN := k\Qdbl\langle t\rangle/k\Qdbl\langle t\rangle_{\geq N+1}\longrightarrow k\Qgtr.
\end{equation}
\begin{lemma}\label{lem:ind of quiver reps}
Let $k\Qgtr\on{-mod}$ denote the category of finite-dimensional left modules, and furthermore 
let $k\Qdbl\langle t\rangle_N\on{-gr}_{[0,N]}$ denote the category of finite-dimensional graded left modules concentrated in degrees $[0,N]$.  
Then the homomorphism \eqref{dbl to gtr} determines an equivalence of categories:
\bd
k\Qgtr\on{-mod} \xrightarrow{\simeq} k\Qdbl\langle t\rangle\intN\on{-gr}_{[0,N]}.
\ed
\end{lemma}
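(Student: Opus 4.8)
The plan is to exhibit mutually inverse functors directly, using the fact that both sides are essentially categories of representations of quivers (with relations, in the graded case coming from the grading constraint). First I would analyze what the data of a module over each algebra amounts to. A finite-dimensional $k\Qgtr$-module $M$ is a collection of vector spaces $M_{(i,n)}$ indexed by $I^{\on{gtr}} = I\times[0,N]$ together with linear maps for each arrow $(h,n)\colon M_{(s(h),n)}\to M_{(t(h),n+1)}$ and $t_{(i,n)}\colon M_{(i,n)}\to M_{(i,n+1)}$ — with no relations, since $k\Qgtr$ is a free path algebra. On the other side, a finite-dimensional graded $k\Qdbl\langle t\rangle$-module concentrated in degrees $[0,N]$ is a graded vector space $M = \bigoplus_{n=0}^N M_n$, each $M_n$ an $S$-module hence decomposing as $M_n = \bigoplus_{i\in I} e_i M_n$, together with the action of each arrow $h\in H$ (degree $1$, so raising $n$ by one) and of $t$ (degree $1$). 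Because $k\Qdbl\langle t\rangle$ is itself a free tensor algebra $T_S(B(\Qdbl)\oplus St)$, such a module is again just the assignment of these linear maps with no relations beyond those forced by the grading and the $S$-module structure; and a module concentrated in $[0,N]$ is precisely one on which everything coming from $k\Qdbl\langle t\rangle_{\geq N+1}$ acts as zero, i.e.\ a module over the truncation $k\Qdbl\langle t\rangle\intN$.

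The key step is then to set up the dictionary $M_{(i,n)} = e_i M_n$ and to check it is a bijection on the level of objects and morphisms. Given a $k\Qgtr$-module with spaces $M_{(i,n)}$, I define $M_n := \bigoplus_{i\in I} M_{(i,n)}$, make it an $S$-module by letting $e_i$ act as the projection onto $M_{(i,n)}$, let $h\in H$ act by the block map with components $(h,n)\colon M_{(s(h),n)}\to M_{(t(h),n+1)}$ (and zero on the other blocks), and let $t$ act by the block map with components $t_{(i,n)}$; one checks this is compatible with the grading (degree-$1$ maps into $[0,N]$) and so extends uniquely to a graded $k\Qdbl\langle t\rangle\intN$-module structure via the universal property of the tensor algebra. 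Conversely, from a graded $k\Qdbl\langle t\rangle\intN$-module one recovers the $M_{(i,n)}=e_iM_n$ and reads off the block components of the $h$- and $t$-actions to get a $k\Qgtr$-module; and morphisms of graded modules are exactly collections of $S$-module maps in each degree commuting with the $h$- and $t$-actions, which under the dictionary are exactly collections of maps $M_{(i,n)}\to M'_{(i,n)}$ commuting with the arrow actions, i.e.\ $k\Qgtr$-module morphisms. Finally, I must verify that the functors so defined are genuinely induced by restriction along the homomorphism \eqref{dbl to gtr}: unwinding the definitions of that homomorphism ($e_i\mapsto\sum_n e_{(i,n)}$, $h\mapsto\sum_n(h,n)$, $t\mapsto\sum_{(i,n)}t_{(i,n)}$), restriction of a $k\Qgtr$-module along it produces precisely the $S$-module-with-$h$-and-$t$-action described above, and the grading it acquires is the one by the second coordinate $n\in[0,N]$; this is the only point requiring any care, and it is essentially a matter of matching the summation formulas.

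The main obstacle — though it is more of a bookkeeping matter than a genuine difficulty — is to confirm that the grading on the restricted module is well-defined and lands in $[0,N]$, and conversely that \emph{every} graded $k\Qdbl\langle t\rangle\intN$-module in degrees $[0,N]$ arises this way, i.e.\ that the $S$-module structure in each degree together with the degree-$1$ actions of $H$ and $t$ is all the data there is. The first point follows because the generators $e_i, h, t$ map under \eqref{dbl to gtr} to elements that respect the second-coordinate grading on $k\Qgtr$ (with $e_i$ in degree $0$, and $h, t$ in degree $1$), so the $\Z_{\geq 0}$-grading pulls back and, since $k\Qgtr$ vanishes in degrees $> N$, is concentrated in $[0,N]$. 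The second is the universal property: a graded module over a tensor algebra $T_S(W)$ with $W$ in degree $1$ is the same as an $S$-module with a degree-$1$ endomorphism-valued action of $W$, and $k\Qdbl\langle t\rangle = T_S\!\big(B(\Qdbl)\oplus St\big)$, so no relations intervene. Assembling these observations gives the asserted equivalence.
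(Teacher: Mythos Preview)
Your proof is correct, and it is essentially the only natural one: unpack both categories as ``vector spaces at vertices plus linear maps along arrows, with no relations,'' match them via $M_{(i,n)} = e_i M_n$, and check that restriction along \eqref{dbl to gtr} together with the second-coordinate grading implements this dictionary. The paper does not give a proof of this lemma at all; it states the result and immediately passes to the consequence about $k\Qdbl[t]\intN$ and $k\Qgtr/J$, treating the equivalence as evident. Your write-up simply supplies the omitted bookkeeping. One small point worth making explicit (you do so implicitly): your argument uses that $t$ commutes with $S$ in $k\Qdbl\langle t\rangle$, i.e.\ that $k\Qdbl\langle t\rangle \cong T_S\big(B(\Qdbl)\oplus St\big)$ rather than the coproduct over $k$; this is the reading under which the lemma is true, and it is consistent with the paper's later use of $k\Qdbl[t] = k\Qdbl\langle t\rangle/(ta-at)$.
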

\noindent
This equivalence identifies representations of $k\Qdbl[t]\intN$ with representations of the quotient $k\Qgtr/J$, where $J$ denotes the two-sided ideal
\begin{equation}\label{eq:t commutes}
J = \Big(\big\{ t_{(s(h), n)}\cdot(h, n+1) - (h,n)\cdot t_{(t(h),n+1)}\; \big| \; h\in H, \hspace{.3em} n\in [0,N-2]\big\}\Big).
\end{equation}

\subsection{Universal Localizations}
We briefly review some aspects of universal localizations that may be unfamiliar to the reader, using Chapter 4 of \cite{Schofield} as our reference; see also \cite{Cohn}.

Suppose that $R$ is a ring with $1$ and $\Sigma$ is a set of elements of $R$.  Then there is a ring $R_\Sigma$ with a homomorphism $R\rightarrow R_\Sigma$ that is universal with respect to the property that for every $r\in \Sigma$, $r$ becomes invertible in $R_\Sigma$.  The ring $R_\Sigma$ is called the {\em universal localization} of $R$ at $\Sigma$; an alternative notation that is sometimes preferable is $\Sigma^{-1}R$.   The universal localization is constructed as follows: letting $\Sigma^{-1}$ denote the set of symbols $a^{-1}$ for $a\in \Sigma$, we define
\bd
\Sigma\inv R = R_\Sigma := R\langle\Sigma\inv\rangle/(\{a\inv a -1 \; |\; a\in\Sigma\}).
\ed
This has the universal property claimed.  
We will need the following properties, which follow immediately from the universal property.
\begin{prop}\label{prop:univ loc}
Suppose $R$ is a ring with $1$.  
\begin{enumerate}
\item If $t\in Z(R)$ is central, then $R_{\{t\}}$ is isomorphic to the \O re localization of $R$ at $t$.  
\item If $\Sigma, \Sigma'\subseteq R$ are subsets, let $\overline{\Sigma}'$ denote the image of $\Sigma'$ in $R_\Sigma$.  Then $(R_\Sigma)_{\overline{\Sigma}'} \cong R_{\Sigma\cup\Sigma'}$.
\item Given a two-sided ideal $I\subseteq R$, let $\overline{\Sigma}$ denote the image of $\Sigma$ in $R/I$ and $I_\Sigma$ denote the two-sided ideal in $R_\Sigma$ generated by $I$.  Then $(R/I)_{\overline{\Sigma}} \cong R_\Sigma/I_\Sigma$.
\end{enumerate}
\end{prop}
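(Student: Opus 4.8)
The plan is to dispatch all three parts by the same formal mechanism: in each case one produces ring homomorphisms in both directions, compatible with the canonical maps out of the relevant base ring, and then checks that both composites are identities by invoking the \emph{uniqueness} clause in the defining universal property of a universal localization. Recall that this property states: any ring homomorphism $\phi\colon R\to S$ such that $\phi(r)$ is invertible for every $r\in\Sigma$ factors uniquely as $R\to R_\Sigma\xrightarrow{\tilde\phi} S$.

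For (1), I would first recall that since $t\in Z(R)$, the Ore conditions at the multiplicative set $\{1,t,t^2,\dots\}$ are automatically satisfied, so the \O re localization $R[t\inv]$ exists as an honest ring, and it enjoys precisely the same universal property as $R_{\{t\}}$ — namely, being universal among homomorphisms out of $R$ that invert $t$ (invertibility of $t$ is equivalent to invertibility of every power of $t$). Since $t$ becomes invertible both in $R_{\{t\}}$ and in $R[t\inv]$, the two universal properties yield maps $R[t\inv]\to R_{\{t\}}$ and $R_{\{t\}}\to R[t\inv]$ under $R$, and uniqueness forces both composites to be the identity.

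For (2), consider the composite $R\to R_\Sigma\to (R_\Sigma)_{\overline{\Sigma}'}$: every element of $\Sigma$ is already invertible in $R_\Sigma$, and every element of $\Sigma'$ maps into the invertible set $\overline{\Sigma}'$, so every element of $\Sigma\cup\Sigma'$ becomes invertible; the universal property of $R_{\Sigma\cup\Sigma'}$ then gives a unique map $R_{\Sigma\cup\Sigma'}\to (R_\Sigma)_{\overline{\Sigma}'}$ under $R$. In the other direction, $R\to R_{\Sigma\cup\Sigma'}$ inverts $\Sigma$, hence factors through $R_\Sigma$; the resulting map $R_\Sigma\to R_{\Sigma\cup\Sigma'}$ inverts $\overline{\Sigma}'$, hence factors through $(R_\Sigma)_{\overline{\Sigma}'}$. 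Both round trips are the identity by the uniqueness clauses. Part (3) is the same argument with ``invert a set of elements'' replaced in one slot by ``kill a two-sided ideal'': the composite $R\to R_\Sigma\to R_\Sigma/I_\Sigma$ annihilates $I$ and inverts $\overline{\Sigma}$, so it factors uniquely through $(R/I)_{\overline{\Sigma}}$; conversely $R\to R/I\to (R/I)_{\overline{\Sigma}}$ inverts $\Sigma$, hence factors through $R_\Sigma$, and the resulting map annihilates $I$, hence annihilates the two-sided ideal $I_\Sigma$ it generates, so it factors through $R_\Sigma/I_\Sigma$; uniqueness once more gives that the two maps are mutually inverse.

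There is no real obstacle here — as already indicated in the text, everything follows formally from the universal property. The only points deserving a moment's care are: in (1), remembering that centrality is what makes the \O re localization both exist and carry the matching universal property, so that the stated isomorphism is canonical rather than a presentation-level accident; and in (3), observing that a homomorphism out of $R_\Sigma$ that kills the image of $I$ automatically kills the two-sided ideal $I_\Sigma$ generated by that image, which is immediate because the kernel of a ring homomorphism is a two-sided ideal.
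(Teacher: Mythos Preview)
Your proposal is correct and matches the paper's approach exactly: the paper does not give a detailed proof but simply states that the properties ``follow immediately from the universal property,'' which is precisely the mechanism you spell out. Your write-up just makes explicit the mutually inverse maps and the uniqueness invocations that the paper leaves implicit.
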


\subsection{Multiplicative Preprojective Algebras} 
We review the multiplicative preprojective algebra of a quiver $Q$ as defined in \cite{CBS}.

Given a quiver $Q$ with double $\Qdbl = (I,H)$, for each arrow $a\in H$ of $\Qdbl$, we define $g_a = 1 + aa^*\in k\Qdbl$.  Write $L_Q$ for the algebra obtained by universal localization of $k\Qdbl$ inverting $\Sigma = \{g_a\; |\; a\in H\}$. Identify the tuple $q\in (k^\times)^I$ with the element $\sum_{i\in I} q_i e_i\in S$.  Crawley-Boevey and Shaw choose an ordering of the arrows in $H$ and define
$\ds \rho_{\on{CBS}} = \prod^{\longrightarrow}_{a\in H} g_a^{\epsilon(a)} - q$ (the arrow over the product indicates that it is taken in the chosen order).  It is proven in \cite{CBS} that, up to isomorphism, the quotient algebra $L_Q/(\rho_{\on{CBS}})$ does not depend on the choice of ordering.  Thus, in this paper we 
specifically fix an ordering $\Omega = \{a_1, \dots, a_g\}$ on the arrows in $Q$, and let
\begin{equation}\label{rho}
\rho_{\on{CBS}}  = g_{a_1}g_{a_2}\dots g_{a_g} g_{a_1^*}^{-1} \dots g_{a_g^*}^{-1}
-q.
\end{equation}
\begin{defn}
The associated multiplicative preprojective algebra is
\bd
\Lambda^q = \Lambda^q(Q) = L_Q/(\rho_{\on{CBS}}),
\ed
where $\rho_{\on{CBS}}$ is defined as in \eqref{rho}.
\end{defn}

\subsection{Homogenized Multiplicative Preprojective Algebras}
A principal tool in this paper is a certain graded algebra $\sA$ that ``homogenizes'' the multiplicative preprojective algebra $\Lambda^q$ of \cite{CBS}.  Here we construct the algebra $\sA$ and  collect some basic facts about $\sA$ and its relation to the multiplicative preprojective algebra $\Lambda^q$.

Thus, fix a quiver $Q$.  We consider $k\Qdbl[t] = k\Qdbl\langle t\rangle/(ta-at \; | \; a\in k\Qdbl)$ as a nonnegatively graded algebra, with the generators $a\in H, t$ all in degree 1, and $S = \oplus_{i\in I} ke_i$ in degree $0$.
We let 
\bd
G_a := t^2 + a a^* \in k\Qdbl[t] \hspace{2em} \text{for all} \hspace{2em} a\in H.
\ed
 \begin{remark}
 Each $G_a$ has diagonal Peirce decomposition: more precisely,
 \bd
 e_{s(a)}G_a = e_{s(a)}t^2 + aa^* =  G_a e_{s(a)}, \hspace{2em} \text{and} \hspace{2em} e_iG_a = e_it^2 = t^2 e_i = G_a e_i \hspace{1em} \text{for $i\neq s(a)$}.
 \ed
 \end{remark} 
 We note the obvious equalities
 \begin{equation}\label{G_a composed a}
 G_a a = aG_{a^*}, \hspace{3em} a^* G_a = G_{a^*} a^*.
 \end{equation}
\noindent
 Given $q \in (k^\times)^I$, we identify $q$ with $q:= \sum_{i\in I} q_ie_i\in k\Qdbl$, a sum of idempotents in the path algebra (which thus also has diagonal Peirce decomposition).  

Analogously to \cite{CBS}, the algebra $k\Qdbl[t]$ admits a universal localization in which the elements $G_a$, $a\in H$, and $t$ are inverted: we write $\sL_t$ for this universal localization.
The algebra $\sL_t$ contains invertible elements $\ds g_a = t^{-2}G_a = 1 + \frac{a}{t}\frac{a^*}{t}$ in graded degree $0$.  We have
$(\sL_t)_0 \cong L_Q$, where $L_Q$ is the universal localization of $k\Qdbl \cong k\Qdbl[t^{\pm 1}]_0$ at the elements $g_a$, $a\in H$, as in \cite{CBS} and reviewed above.
As above, fix an ordering $\Omega = \{a_1, \dots, a_g\}$ on the arrows in $Q$.  Write
\begin{equation}\label{D defn}
D = G_{a_1}\dots G_{a_g}, \hspace{1em} D^*  = q(G_{a_g^*}\dots G_{a_1^*}),
\end{equation}
\begin{equation}\label{rho defn}
\rho = D - D^* 
 = (G_{a_1}\dots G_{a_g}) - q(G_{a_g^*}\dots G_{a_1^*}) \in k\Qdbl[t].
\end{equation}

\begin{defn}
We write $\sA = k\Qdbl[t]/(\rho)$, where $(\rho)$ denotes the two-sided ideal generated by $\rho$.  
\end{defn}
The element $\rho$ has diagonal Peirce decomposition, and so $\rho e_i = e_i \rho$, and 
$(\rho) = (\{\rho e_i| i\in I\})$.
\begin{prop}\label{prop:algebra A}
Write $\Sigma = \{G_a \; | \; a\in H\}\cup\{t\}$.  We have:
\begin{enumerate}
\item $\sA$ is a graded algebra where $a_i, a_i^*$ and $t$ have degree $1$ (and $S = \sum_{i\in I} ke_i$ lies in degree $0$). 
\item The universal localization 
\begin{equation}\label{universal localization}
\Lambda_t:= \Sigma\inv\sA
\end{equation}
 of $\sA$ obtained by inverting all $G_a, a\in H$, and $t$, is a graded algebra, and $(\Lambda_t) \cong \Lambda^q(Q)[t^{\pm 1}]$ where $\Lambda^q(Q)=: \Lambda^q$ denotes the multiplicative preprojective algebra of \cite{CBS}.
\end{enumerate}
\end{prop}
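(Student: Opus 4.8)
The plan is to dispatch part (1) and the grading half of part (2) with a short homogeneity check, and to prove the isomorphism in (2) by localizing first at the central variable $t$ — which collapses everything onto the degree-zero part, where the identification $(\sL_t)_0\cong L_Q$ recorded above does the real work.

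For (1) I would note that $G_a=t^2+aa^*$ is homogeneous of degree $2$ for the grading on $k\Qdbl[t]$ with $H\cup\{t\}$ in degree $1$ and $S$ in degree $0$; since $q\in S$ is homogeneous of degree $0$, the products $D=G_{a_1}\cdots G_{a_g}$ and $D^*=q(G_{a_g^*}\cdots G_{a_1^*})$, hence $\rho=D-D^*$ and each $\rho e_i$, are homogeneous of degree $2g$. Thus $(\rho)=\big(\{\rho e_i\mid i\in I\}\big)$ is a graded two-sided ideal and $\sA$ inherits a (nonnegative) grading with $a_i,a_i^*,t$ in degree $1$ and $S$ in degree $0$. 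For the grading assertion in (2): $\Sigma$ consists of homogeneous elements of $\sA$ (degrees $2$ and $1$), and the universal localization of a graded algebra at a set of homogeneous elements is $\mathbb{Z}$-graded once one sets $\deg(s^{-1})=-\deg(s)$ — the defining relations $s^{-1}s-1$ and $ss^{-1}-1$ being then homogeneous of degree $0$ — so $\Lambda_t$ is $\mathbb{Z}$-graded with $a_i,a_i^*,t$ in degree $1$ (and $g_a=t^{-2}G_a$ in degree $0$).

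For the isomorphism, I would first invoke Proposition~\ref{prop:univ loc}(3) with $R=k\Qdbl[t]$, $I=(\rho)$, and the set $\Sigma$, to get $\Lambda_t=\Sigma^{-1}\sA\cong\sL_t/(\rho)$, where now $(\rho)$ denotes the two-sided ideal of $\sL_t=\Sigma^{-1}k\Qdbl[t]$ generated by $\rho$. Next I would observe that the image of $t$ in $\sL_t$ is central (it commutes with each inverted element $s$, since $ts^{-1}=s^{-1}(ts)s^{-1}=s^{-1}(st)s^{-1}=s^{-1}t$), invertible, and of degree $1$, so $\sL_t$ is the Laurent extension of its degree-zero subalgebra, $\sL_t\cong(\sL_t)_0[t^{\pm1}]$ (the tautological graded map is bijective by an elementary degree argument, as a homogeneous $x$ of degree $n$ equals $(xt^{-n})t^n$ with $xt^{-n}\in(\sL_t)_0$). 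Composing with the isomorphism $(\sL_t)_0\cong L_Q$ recorded above — under which $g_a\mapsto 1+aa^*$, $e_i\mapsto e_i$, $q\mapsto q$ — yields a graded isomorphism $\sL_t\cong L_Q[t^{\pm1}]$ with $t$ in degree $1$.

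Finally I would chase $\rho$ through this isomorphism. In $\sL_t$ we have $G_a=t^2 g_a$ with $t^2$ central, so $D=t^{2g}(g_{a_1}\cdots g_{a_g})$ and $D^*=t^{2g}(q\,g_{a_g^*}\cdots g_{a_1^*})$, whence $\rho=t^{2g}\widetilde\rho$ with $\widetilde\rho:=g_{a_1}\cdots g_{a_g}-q\,g_{a_g^*}\cdots g_{a_1^*}\in(\sL_t)_0$; since $t$ is a unit, $(\rho)=(\widetilde\rho)$ in $\sL_t$. Under $\sL_t\cong L_Q[t^{\pm1}]$ the element $\widetilde\rho$ becomes $g_{a_1}\cdots g_{a_g}-q\,g_{a_g^*}\cdots g_{a_1^*}\in L_Q$ (now with the Crawley--Boevey--Shaw $g_a=1+aa^*$), and right-multiplying $\rho_{\on{CBS}}$ of \eqref{rho} by the unit $g_{a_g^*}\cdots g_{a_1^*}$ produces exactly this element, so $(\widetilde\rho)=(\rho_{\on{CBS}})$ in $L_Q$ and hence in $L_Q[t^{\pm1}]$. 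Therefore
\[
\Lambda_t\;\cong\;L_Q[t^{\pm1}]/(\rho_{\on{CBS}})\;=\;\big(L_Q/(\rho_{\on{CBS}})\big)[t^{\pm1}]\;=\;\Lambda^q[t^{\pm1}],
\]
the middle equality holding since $\rho_{\on{CBS}}\in L_Q$ and $t$ is central, and this is even an isomorphism of graded algebras ($t$ in degree $1$, $\Lambda^q$ in degree $0$). I do not expect a deep obstacle: the only thing requiring genuine care is the bookkeeping around $t$ — that it stays central after universal localization, that Proposition~\ref{prop:univ loc}(3) legitimately interchanges the quotient by $(\rho)$ with the localization at $\Sigma$, and the identity $\rho=t^{2g}\widetilde\rho$ that matches $(\rho)$ with the Crawley--Boevey--Shaw relation.
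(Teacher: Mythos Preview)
Your proof is correct and follows essentially the same route as the paper, which simply says the isomorphism ``follows from Proposition~\ref{prop:univ loc}'' together with the already-recorded identification $(\sL_t)_0\cong L_Q$. You have unpacked exactly this: invoke Proposition~\ref{prop:univ loc}(3) to swap the quotient and the localization, use centrality and invertibility of $t$ to write $\sL_t\cong(\sL_t)_0[t^{\pm1}]\cong L_Q[t^{\pm1}]$, and then match $(\rho)$ with $(\rho_{\on{CBS}})$ via the unit $g_{a_g^*}\cdots g_{a_1^*}$.
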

The isomorphism \eqref{universal localization} of part (2) of Proposition \ref{prop:algebra A} follows from Proposition \ref{prop:univ loc}.

\section{Representations and their Moduli}
\subsection{Representations of $k\Qdbl$ and $k\Qgtr$}\label{reps of kQdbl and kQgtr}
Fixing some $N\geq 2g$, where $g$ is the number of arrows in $Q$, we form the graded-tripled quiver $\Qgtr$ associated to $Q$ as above.\footnote{Thus, in particular, $N$ is at least as large as the degree of the relation $\rho$.}  
Given a dimension vector $\alpha \in \mathbb{Z}_{\geq 0}^I$ for the quiver $\Qdbl$, we write $\algtr \in \mathbb{Z}_{\geq 0}^{I\times [0,N]}$ for the dimension vector for $k\Qgtr$ for which $\algtr_{i,n} = \alpha_i$ for all $n\in [0,N]$.  
We write $\on{Rep}(k\Qdbl,\alpha)$ for the space of representations of $k\Qdbl$ with dimension vector $\alpha$ and $\mathbb{G} = \prod_i GL(\alpha_i)$ for the automorphism group; thus
\bd
\on{Rep}(k\Qdbl,\alpha) := \prod_{h\in H} \on{Hom}(k^{\alpha_{s(h)}}, k^{\alpha_{t(h)}}).
\ed
 Similarly we write
$\on{Rep}(k\Qgtr,\algtr)$ for the space of representations of $k\Qgtr$ with dimension vector $\algtr$, and $\Ggtr$ for the automorphism group.

As in the construction of Section 4.3 of \cite{McNKirwan}, there is a natural ``induction functor'' from the category of representations of $k\Qdbl$ with dimension vector $\alpha$ to the category of representations of $k\Qgtr$ of dimension vector $\algtr$.  
The construction proceeds as follows.  To a representation $V$ of $k\Qdbl$ we may associate the $\mathbb{Z}_{\geq 0}$-graded vector space $V[t]$, and let arrows $h$ of $\Qdbl$ act as multiplication followed by shift-of-grading.  This makes $V[t]$ into a graded left $k\Qdbl[t]$-module.  We then form $V[t]/V[t]_{\geq N+1}$, a graded left $k\Qdbl[t]\intN$-module, and finally apply Lemma \ref{lem:ind of quiver reps} to get a representation of $k\Qgtr$: in fact, a representation of the quotient
$k\Qgtr/J$ where $J$ is as in \eqref{eq:t commutes}.

More concretely, the above construction is the following.
Suppose we have a representation $V = (V_i)_{i \in I}$ of $k\Qdbl$ of dimension vector $\alpha$.  
We obtain a representation  of $k\Qdbl[t]$ on a vector space $V_{\bullet, \bullet}$ of dimension vector $\algtr$ defined by:
\begin{enumerate}
\item setting $V_{i,n} := V_i$ for all $n\in [a,b]$;
\item  defining $t_{i,n} = t\cdot -: V_{i,n} = V_i \xrightarrow{\on{id}} V_i = V_{i,n+1}$ to act by shift of $\mathbb{Z}$-grading; and 
\item defining each generator of $k\Qdbl[t]$ corresponding to $h\in H$ to act as the composite
\bd
(h,n): V_{i,n} = V_i \xrightarrow{h\cdot -} V_i = V_{i,n}\xrightarrow{t\cdot -} V_{i,n+1}.
\ed  
\end{enumerate}

The construction determines a morphism of algebraic varieties (``induction'')
\bd
\mathsf{Ind}^\circ: \on{Rep}(k\Qdbl,\alpha)\longrightarrow \on{Rep}(k\Qgtr, \algtr).
\ed

\bd
\text{Write } \hspace{2em} 
 \mathbb{G} = \prod_i GL(V_{i})\hspace{1em}\text{and}\hspace{1em}  \Ggtr = \prod_{(i,n)\in I\times[a,b]} GL(V_{i,n}) \cong \prod_{n\in [a,b]}\mathbb{G},
\ed with the
diagonal homomorphism $\ds \on{diag}: \mathbb{G}\rightarrow \Ggtr \cong \prod_{n\in [a,b]} \mathbb{G}$.  Then the morphism $\mathsf{Ind}^\circ$ is $(\mathbb{G}, \Ggtr)$-equivariant.  We thus get a natural $\Ggtr$-equivariant morphism
\begin{equation}\label{Ind-map}
\mathsf{Ind}: \Ggtr\times_{\mathbb{G}} \on{Rep}(k\Qdbl,\alpha)\longrightarrow \on{Rep}(k\Qgtr, \algtr).
\end{equation}
Thus, given a representation $(a_h: V_{s(h)}\rightarrow V_{t(h)})_{h \in H}$ of $k\Qdbl$ on $V$, and $(g_{i,n})\in \Ggtr$, we have
\bd
\mathsf{Ind}\big((g_{i,n}), a_h\big) = \big((h,n), t_{i,n}\big) \, \text{where} \,  (h,n) = g_{t(h),n+1}a_h g_{s(h),n}\inv \, \text{and} \, 
t_{i,n} = g_{i,n+1} g_{i,n}\inv.
\ed

\begin{prop}
The map $\mathsf{Ind}$ of \eqref{Ind-map} defines a $\Ggtr$-equivariant open immersion of $\Ggtr\times_{\mathbb{G}} \on{Rep}(k\Qdbl,\alpha)$ in $\on{Rep}(k\Qgtr/J,\algtr)$, whose image consists of those $\big((h,n), t_{i,n}\big)$ for which:
\begin{equation}
\text{$t_{i,n}$ is an isomorphism for all $n\in [0,N-1]$.} \tag{$\dagger$}
\end{equation}
\end{prop}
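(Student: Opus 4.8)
The plan is to realize $\mathsf{Ind}$ as an isomorphism onto the open subscheme $U\subseteq\on{Rep}(k\Qgtr/J,\algtr)$ cut out by condition $(\dagger)$, by writing down an explicit inverse morphism; since $\mathsf{Ind}$ is already $\Ggtr$-equivariant, its image is then a $\Ggtr$-stable open subscheme and $\mathsf{Ind}$ is the asserted equivariant open immersion. Concretely there are two things to establish: (i) the image of $\mathsf{Ind}$ lies in $U$, and (ii) there is a morphism of schemes $\sigma\colon U\to\Ggtr\times_{\mathbb{G}}\on{Rep}(k\Qdbl,\alpha)$ that is two-sided inverse to $\mathsf{Ind}$.

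For (i): the map $\mathsf{Ind}^\circ$ lands in $\on{Rep}(k\Qgtr/J,\algtr)$ by construction --- on $\on{Rep}(k\Qdbl,\alpha)$ itself the operators $t_{i,n}$ act as the identity, so the relations \eqref{eq:t commutes} hold tautologically --- and since $\on{Rep}(k\Qgtr/J,\algtr)\subseteq\on{Rep}(k\Qgtr,\algtr)$ is $\Ggtr$-stable, the same holds after the $\Ggtr$-equivariant extension. (Alternatively one checks directly from the formula that $t_{(s(h),n)}\cdot(h,n+1)$ and $(h,n)\cdot t_{(t(h),n+1)}$ both equal $g_{t(h),n+2}\,a_h\,g_{s(h),n}\inv$.) Because each $g_{i,n}$ is invertible, the operator $t_{i,n}=g_{i,n+1}g_{i,n}\inv$ is an isomorphism, so the image lies in $U$; and $U$ is open, being the locus where the regular function $\prod_{i\in I,\ 0\le n\le N-1}\det(t_{i,n})$ is nonzero.

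For (ii): given $\big((h,n),t_{i,n}\big)\in U$, put $g_{i,0}=\on{id}$ and $g_{i,n}=t_{i,n-1}t_{i,n-2}\cdots t_{i,0}$ for $n\ge1$ (invertible, as a composite of isomorphisms), set $a_h=t_{t(h),0}\inv\cdot(h,0)$, and define $\sigma\big((h,n),t_{i,n}\big)$ to be the class of $\big((g_{i,n}),(a_h)\big)$. This is a morphism of schemes since matrix multiplication and inversion are, and the inversions used are legitimate over $U$; changing the gauge normalization multiplies $\big((g_{i,n}),(a_h)\big)$ by the $\mathbb{G}$-action, so $\sigma$ is well defined into the quotient. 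The identity $\sigma\circ\mathsf{Ind}=\on{id}$ is a bookkeeping check: $\sigma$ applied to $\mathsf{Ind}\big((g_{i,n}),(a_h)\big)$ returns $\big((g_{i,n}g_{i,0}\inv),(g_{t(h),0}a_hg_{s(h),0}\inv)\big)$, which is the $\mathbb{G}$-translate of $\big((g_{i,n}),(a_h)\big)$ by $(g_{i,0}\inv)$. The identity $\mathsf{Ind}\circ\sigma=\on{id}_U$ is the crux and the one place the relations $J$ enter: $\mathsf{Ind}$ recovers $g_{i,n+1}g_{i,n}\inv=t_{i,n}$ by telescoping and puts $g_{t(h),n+1}a_hg_{s(h),n}\inv$ in the arrow slots; this equals $(h,0)$ when $n=0$, and for the inductive step one uses $g_{t(h),n+2}a_hg_{s(h),n+1}\inv=t_{t(h),n+1}\big(g_{t(h),n+1}a_hg_{s(h),n}\inv\big)t_{s(h),n}\inv$ together with \eqref{eq:t commutes}, in the form $(h,n+1)=t_{t(h),n+1}\cdot(h,n)\cdot t_{s(h),n}\inv$ (valid since $t_{s(h),n}$ is invertible on $U$), to propagate the equality to level $n+1$.

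The step I expect to be the real obstacle is this last induction: reconstructing every arrow operator $(h,n)$ with $n>0$ from the single operator $(h,0)$ and the $t_{i,n}$'s, which works precisely because the relations \eqref{eq:t commutes} become usable once $(\dagger)$ guarantees the $t_{i,n}$ are invertible. The remaining ingredients are standard: $\Ggtr\times_{\mathbb{G}}\on{Rep}(k\Qdbl,\alpha)$ is a scheme, since the diagonal $\mathbb{G}$ acts freely on $\Ggtr\cong\prod_{[0,N]}\mathbb{G}$ --- indeed $\Ggtr\to\Ggtr/\mathbb{G}$ is a trivial bundle --- and $\on{Rep}(k\Qgtr/J,\algtr)$ is a closed subscheme of a product of Hom-spaces on which the $\det(t_{i,n})$ are regular functions.
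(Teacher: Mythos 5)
Your proof is correct, and it is essentially the argument the paper implicitly intends: the proposition is stated without proof because, once the explicit formula for $\mathsf{Ind}$ is written out, the inverse is visible, and your construction of $\sigma$ (gauge-fixing $g_{i,0}=\on{id}$ and then telescoping) is exactly the standard way to make that visible. The two key points you highlight---that condition $(\dagger)$ is needed to build the $g_{i,n}$ from the $t_{i,n}$, and that the relations $J$ are needed to propagate $(h,0)$ to all $(h,n)$ along the telescope---are precisely the content of the statement, and your verification of both is sound. One cosmetic remark: in part (i) you use ``$\cdot$'' for left-to-right path composition (so $t_{(s(h),n)}\cdot(h,n+1)$ corresponds to the operator $(h,n+1)\circ t_{(s(h),n)}$), while in part (ii) you write $a_h=t_{t(h),0}\inv\cdot(h,0)$ where ``$\cdot$'' must be read as the operator composition $t_{t(h),0}\inv\circ(h,0)$; the types force the correct reading in each case, but a uniform convention would make the bookkeeping cleaner. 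Otherwise the argument is complete, including the scheme-theoretic points (that $U$ is principal open, that $\Ggtr\times_{\mathbb{G}}\on{Rep}(k\Qdbl,\alpha)$ is a scheme, and that $\sigma$ is a morphism since it is built from matrix multiplication and inversion of units).
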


\subsection{Representations of $\sA$ and $\sA\intN$}\label{induction and truncation}
Let $\sA\on{-Gr}$ denote the category of graded left $\sA$-modules.  
We also consider the category  $\sA\intN\on{-Gr}_{\geq 0}$ of  those graded left $\sA\intN$-modules $M$ for which $M_i=0$ for $i\notin [0,N]$.  We remark that $\sA\intN\on{-Gr}_{\geq 0}$ can naturally be viewed as a full subcategory of the category $\sA\intN\on{-Gr}$ of all graded left $\sA\intN$-modules, hence also of $\sA\on{-Gr}$.
Define a functor of truncation, 
\bd
\tau_{[0,N]}: \sA\on{-Gr} \longrightarrow \sA\intN\on{-Gr}_{\geq 0},
\ed by $M\mapsto \tau_{[0,N]}M := M_{\geq 0}/M_{\geq N+1}$.  As above, we have a graded vector space injection $\tau_{[0,N]}(M)\rightarrow M$ that is the identity on the $m$th graded piece for $m\in [0,N]$ and is zero elsewhere; this map is $\sA_{\leq m}$-linear on $M_{N-m}$.

\subsection{Representations of $\sA$ and $\Lambda^q$}
We note:
\begin{remark}
The functor $\Lambda_t\on{-Gr}\longrightarrow \Lambda^q\on{-Mod}$, $M\mapsto M_0$, is an equivalence of categories.
\end{remark}
Recall from \eqref{universal localization} that, letting $\Sigma = \{G_a \; | \; a\in H\}\cup\{t\}$, we have a graded algebra isomorphism $\Sigma\inv\sA \cong \Lambda_t = \Lambda^q[t^{\pm 1}]$, and hence a graded algebra homomorphism
$\sA\rightarrow \Lambda^q[t^{\pm 1}]$.
Given a left or right $\Lambda^q$-module $\overline{\mathcal{M}}$, we form a graded left or right $\Lambda_t$-module $\mathcal{M} = \overline{\mathcal{M}}[t^{\pm 1}]$, and thus a graded $\sA$-module $M = \mathcal{M}[t]=\mathcal{M}[t^{\pm 1}]_{\geq 0}$.   This defines a functor
\bd
R: \Lambda^q\on{-Mod}\longrightarrow \sA\on{-Gr}_{\geq 0}.
\ed
In the opposite direction, we have a functor $(\Lambda_t\otimes_{\sA} -)_0: \sA\on{-Gr}_{\geq 0}\rightarrow \Lambda^q\on{-Mod}$.
We have:
\begin{lemma}\label{dual of fin diml lambda module}
\mbox{}
\begin{enumerate}
\item The functors
$\xymatrix{(\Lambda_t\otimes_{\sA} -)_0: \sA\on{-Gr}_{\geq 0}\ar@<.5ex>[r] &  \Lambda^q\on{-Mod}:R\ar@<.5ex>[l]
}$
form an adjoint pair.
\item
If $\overline{\mathcal{M}}$ is a finite-dimensional left $\Lambda^q$-module then the graded left $\sA$-module $M = \overline{\mathcal{M}}[t]$ is finitely generated and projective as a left $S_t$-module and as a left $k[t]$-module.  Moreover, we have
$\Hom_{k[t]}(M,k[t]) \cong \Hom_k(\overline{\mathcal{M}},k)[t]$ as a graded right $\sA$-module.  
\end{enumerate}
\end{lemma}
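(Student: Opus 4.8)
The plan is to recognize $(\Lambda_t\otimes_\sA-)_0$ as a composite of three left adjoints, so that $R$ emerges as the composite of the corresponding right adjoints. I would factor it as
\[
\sA\on{-Gr}_{\geq 0}\;\overset{j}{\hookrightarrow}\;\sA\on{-Gr}\;\xrightarrow{\;\Lambda_t\otimes_\sA(-)\;}\;\Lambda_t\on{-Gr}\;\xrightarrow{\;(-)_0\;}\;\Lambda^q\on{-Mod}.
\]
The inclusion $j$ of the full subcategory of graded $\sA$-modules concentrated in degrees $\geq 0$ is left adjoint to truncation $N\mapsto N_{\geq 0}$ (because $\sA$ is $\mathbb{Z}_{\geq 0}$-graded, $N_{\geq 0}$ is an $\sA$-submodule of any graded $N$, and $\Hom_{\sA\on{-Gr}}(M,N)=\Hom_{\sA\on{-Gr}_{\geq 0}}(M,N_{\geq 0})$ for $M\in\sA\on{-Gr}_{\geq 0}$); $\Lambda_t\otimes_\sA(-)$ is left adjoint to restriction of scalars along $\sA\to\Lambda_t$; and $(-)_0$ is an equivalence by the Remark preceding the lemma, with quasi-inverse $\overline{\mathcal{M}}\mapsto\overline{\mathcal{M}}[t^{\pm1}]$. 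The composite right adjoint then sends $\overline{\mathcal{M}}$ to $\big(\overline{\mathcal{M}}[t^{\pm1}]\big)_{\geq 0}=\overline{\mathcal{M}}[t]=R(\overline{\mathcal{M}})$, which is exactly the asserted adjunction.

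\textbf{Part (2), projectivity.} Here I would use that $S=\bigoplus_i ke_i$ is semisimple: $\overline{\mathcal{M}}=\bigoplus_i e_i\overline{\mathcal{M}}$ with each $e_i\overline{\mathcal{M}}$ finite-dimensional, and since $t$ is central in $\sA$ this gives a decomposition $M=\overline{\mathcal{M}}[t]=\bigoplus_i (e_i\overline{\mathcal{M}})[t]$ of graded $S_t$-modules with $(e_i\overline{\mathcal{M}})[t]\cong (e_iS_t)^{\oplus\dim_k e_i\overline{\mathcal{M}}}$. As $e_iS_t$ is a direct summand of $S_t$, this displays $M$ as a finitely generated projective $S_t$-module; forgetting the $S$-grading, a $k$-basis of $\overline{\mathcal{M}}$ placed in degree $0$ is a homogeneous $k[t]$-basis of $M$, so $M$ is finitely generated free over $k[t]$.

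\textbf{Part (2), the duality isomorphism.} I would first invert $t$. Put $\mathcal{M}:=\overline{\mathcal{M}}[t^{\pm1}]$, so $M=\mathcal{M}_{\geq 0}$ and $\mathcal{M}=M\otimes_{k[t]}k[t^{\pm1}]$. Since $M$ is finitely generated free over $k[t]$, the graded module $\Hom_{k[t]}(M,k[t])$ is again free over $k[t]$ on the dual basis in degree $0$, hence concentrated in degrees $\geq 0$, and the base-change map identifies it with $\Hom_{k[t^{\pm1}]}(\mathcal{M},k[t^{\pm1}])_{\geq 0}$; this is compatible with the right $\sA$-actions because $M\hookrightarrow\mathcal{M}$ is $\sA$-linear and $\sA$ acts on both through $\sA\to\Lambda_t$. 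It therefore suffices to build a graded isomorphism $\Hom_{k[t^{\pm1}]}(\mathcal{M},k[t^{\pm1}])\cong\overline{\mathcal{M}}^*[t^{\pm1}]$ of right $\Lambda_t$-modules, where $\overline{\mathcal{M}}^*=\Hom_k(\overline{\mathcal{M}},k)$ carries its contragredient right $\Lambda^q$-action and $\Lambda_t=\Lambda^q[t^{\pm1}]$; taking $(-)_{\geq 0}$ of both sides recovers the claim $\Hom_{k[t]}(M,k[t])\cong\Hom_k(\overline{\mathcal{M}},k)[t]$. For the isomorphism itself, $\mathcal{M}=\overline{\mathcal{M}}\otimes_k k[t^{\pm1}]$ with $\overline{\mathcal{M}}$ finite-dimensional in degree $0$, so $\psi\otimes f\mapsto\big(v\otimes g\mapsto\psi(v)fg\big)$ is a graded $k[t^{\pm1}]$-linear isomorphism; being $k[t^{\pm1}]$-linear, and $\Lambda_t$ being generated over $k[t^{\pm1}]$ by $\Lambda^q$, its $\Lambda_t$-equivariance reduces to equivariance for $\Lambda^q$, which holds by the definition of the contragredient action.

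\textbf{Main obstacle.} The hard part will not be any single identification but the bookkeeping that keeps them compatible with the module structures rather than merely with the underlying $k[t]$-module structures: $\sA$ and $\Lambda_t$ act on the \emph{left} of $M$ and $\mathcal{M}$, while every dual in sight is a \emph{right} module, so one must consistently convert ``left multiplication by an element on $M$'' into ``right multiplication by that element on the dual.'' Once the left $\sA$-action on $M$ is written out explicitly via $\sA\to\Lambda_t$, each equivariance check reduces to a short computation on the algebra generators of $\Lambda^q$ together with $t$, so none is genuinely difficult.
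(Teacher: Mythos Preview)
Your proof is correct. The paper states this lemma without proof, treating all three assertions as routine; your argument supplies exactly the kind of verification the authors suppress. The factorization of the left adjoint in Part~(1) into inclusion, extension of scalars, and the equivalence $(-)_0$ is the natural way to see the adjunction, and your treatment of Part~(2) via base change to $k[t^{\pm1}]$ and then truncation back is clean and correct (the key point, which you handle, is that $M$ being free over $k[t]$ with generators in degree $0$ forces $\Hom_{k[t]}(M,k[t])$ to sit in degrees $\geq 0$, so nothing is lost in the truncation). Your closing remark about the bookkeeping of left-versus-right actions is apt: once one writes the $\sA$-action on $M=\overline{\mathcal{M}}[t]$ explicitly as $a\cdot(v\otimes t^m)=(\overline{a}v)\otimes t^{m+1}$ for an arrow $a$ (using that $a$ maps to $\overline{a}\,t$ under $\sA\to\Lambda^q[t^{\pm1}]$), the equivariance of the duality map is a one-line check on generators.
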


\subsection{Representation Spaces and Group Actions}
Because the multiplicative preprojective algebra $\Lambda^q$ is the quotient $L_Q/(\rho_{\on{CBS}})$ of the localization $L_Q$ of $k\Qdbl$ by the ideal generated by $\rho_{\on{CBS}}$, the space $\on{Rep}(\Lambda^q,\alpha)$ of left $\Lambda^q$-modules with dimension vector $\alpha$ is naturally a locally closed subscheme of $\on{Rep}(k\Qdbl,\alpha)$: it is the closed subset, defined by vanishing of $\rho_{\on{CBS}}$, of the open set defined by invertibility of the elements $g_a$.

Similarly, the algebra $\sA\intN$ is a quotient of $k\Qdbl[t]\intN$ and thus, via Lemma \ref{lem:ind of quiver reps}, the space $\on{Rep}_{\on{gr}}(\sA\intN, \algtr)$ of graded left $\sA\intN$-modules concentrated in degrees $[0,N]$ is identified with a closed subscheme of $\on{Rep}(k\Qgtr, \algtr)$ defined by the vanishing of the images of $\rho$ and $J$ in $k\Qgtr$.  

It is immediate from the construction of Section \ref{reps of kQdbl and kQgtr}
that:
\begin{prop}[cf. Prop. 4.7 of \cite{McNKirwan}]
The morphism $\mathsf{Ind}$ of \eqref{Ind-map} restricts to an open immersion:
\bd
\mathsf{Ind}: \Ggtr \times_{\mathbb{G}} \on{Rep}(\Lambda^q,\alpha) \rightarrow \on{Rep}_{\on{gr}}(\sA\intN, \algtr).
\ed
Its image consists of those representations on which the elements $t, G_a$ act invertibly whenever their domain and target lie in the range $[0,N]$.
\end{prop}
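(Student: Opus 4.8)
The plan is to deduce this proposition from the previously-established open immersion into $\on{Rep}(k\Qgtr/J, \algtr)$ together with the identifications of representation spaces as locally closed subschemes. First I would recall the chain of identifications already in place: by Lemma~\ref{lem:ind of quiver reps}, $\on{Rep}(k\Qgtr, \algtr)$ is identified with $\on{Rep}(k\Qdbl[t]\intN\on{-gr}_{[0,N]}, \algtr)$, and under this identification the closed subscheme cut out by the images of $\rho$ and $J$ is exactly $\on{Rep}_{\on{gr}}(\sA\intN, \algtr)$, since $\sA\intN = k\Qdbl[t]\intN/(\bar\rho)$ and the ideal $J$ of \eqref{eq:t commutes} encodes the relation $ta = at$ needed to pass from $k\Qdbl\langle t\rangle\intN$ to $k\Qdbl[t]\intN$. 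Similarly, $\on{Rep}(\Lambda^q,\alpha)$ sits inside $\on{Rep}(k\Qdbl,\alpha)$ as the locally closed subscheme defined by invertibility of the $g_a$ and vanishing of $\rho_{\on{CBS}}$.

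The key step is then a compatibility check: the morphism $\mathsf{Ind}$ of \eqref{Ind-map}, which by the previously-stated Proposition is a $\Ggtr$-equivariant open immersion of $\Ggtr\times_\mathbb{G}\on{Rep}(k\Qdbl,\alpha)$ into $\on{Rep}(k\Qgtr/J, \algtr)$ with image the locus $(\dagger)$ where all $t_{i,n}$ are isomorphisms, carries the subspace $\Ggtr\times_\mathbb{G}\on{Rep}(\Lambda^q,\alpha)$ precisely onto the intersection of that image with $\on{Rep}_{\on{gr}}(\sA\intN,\algtr)$. Concretely, I would trace through the explicit formula: given $(a_h)\in\on{Rep}(k\Qdbl,\alpha)$ and the induced representation with $(h,n) = g_{t(h),n+1}a_h g_{s(h),n}\inv$ and $t_{i,n} = g_{i,n+1}g_{i,n}\inv$, one computes that the operator $G_a = t^2 + aa^*$ acts on the induced module (where its source and target both lie in $[0,N]$) by an operator conjugate, via the $g$'s, to $t^2\cdot(1 + a_ha_{h^*}) = t^2 g_{a_h}$ — here using that $t$ acts invertibly on $(\dagger)$ and commutes with everything up to grading shift. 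Hence $G_a$ acts invertibly exactly when $g_{a_h}$ is invertible, i.e. exactly on $\on{Rep}(L_Q,\alpha)$; and the relation $\rho = D - D^*$ becomes, after clearing powers of $t$, precisely $\rho_{\on{CBS}}$ (up to the invertible factor $t^{2g}$ and the reordering conventions fixed in \eqref{rho} and \eqref{D defn}), so the vanishing of $\rho$ on the induced module is equivalent to the vanishing of $\rho_{\on{CBS}}$ on the original. Since $\mathsf{Ind}$ is already known to be an open immersion and its source and target restrict compatibly to the relevant closed subschemes, restricting an open immersion to a closed subscheme of the target (pulled back to a closed subscheme of the source) yields an open immersion, giving the first claim; and the description of the image follows by intersecting the image locus $(\dagger)$ with the closed conditions defining $\on{Rep}_{\on{gr}}(\sA\intN,\algtr)$ and rephrasing, as above, in terms of invertibility of $t$ and the $G_a$.

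The main obstacle I expect is the bookkeeping in the grading-shift computation that matches $\rho$ to $\rho_{\on{CBS}}$: one must be careful that on the induced module, multiplication by $a_h$ is followed by a shift of grading, so that the operator "$aa^*$" appearing in $G_a$ lands back in the correct graded piece and equals the shift-by-$2$ of honest composition $a_ha_{h^*}$ conjugated by the transition maps $g_{i,n}$; keeping track of which graded components are in range $[0,N]$ (this is where $N\geq 2g$ enters, ensuring the full relation $\rho$, of degree $2g$, and all the $G_a$ appearing in it, have their operators defined on pieces inside $[0,N]$) and ensuring the ordering conventions in \eqref{D defn} line up with those in \eqref{rho} is the delicate part. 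Everything else is a formal consequence of \cite{McNKirwan}-style arguments and Proposition~\ref{prop:univ loc}(3), which guarantees that inverting the $G_a$ and passing to the quotient by $\rho$ commute, matching the two-step description of $\Lambda^q = L_Q/(\rho_{\on{CBS}})$.
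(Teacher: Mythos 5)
Your proposal is correct and fills in exactly what the paper leaves implicit — the paper gives no written proof, asserting only that the proposition ``is immediate from the construction of Section \ref{reps of kQdbl and kQgtr}'' (by analogy with Prop.~4.7 of \cite{McNKirwan}), and your elaboration, including the translation of $G_a$ into $t^2 g_{a_h}$ on the locus where $t$ is invertible and the clearing-of-$t$'s identification $\rho = t^{2g}\,\rho_{\on{CBS}}\,(g_{a_g^*}\cdots g_{a_1^*})$, is precisely the intended reasoning. One small imprecision worth tidying: $\Ggtr\times_\mathbb{G}\on{Rep}(\Lambda^q,\alpha)$ is not the full scheme-theoretic preimage of $\on{Rep}_{\on{gr}}(\sA\intN,\algtr)$ under $\mathsf{Ind}$ but rather its intersection with the open locus where the $g_a$ act invertibly, so the cleaner way to run the final step is to restrict the open immersion first to the open subset (invertibility of $t$ and $G_a$ on the target, of $g_a$ on the source), then to the closed condition $\rho=0\Leftrightarrow\rho_{\on{CBS}}=0$.
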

\begin{corollary}
The map $\mathsf{Ind}$ defines an open immersion of moduli stacks 
\bd
\on{Rep}(\Lambda^q,\alpha)/\mathbb{G}\rightarrow \on{Rep}_{\on{gr}}(\sA\intN, \algtr)/\Ggtr.
\ed
\end{corollary}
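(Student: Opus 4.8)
The plan is to deduce the corollary directly from the preceding proposition by passing from spaces to their stack quotients. The preceding proposition already gives a $\Ggtr$-equivariant open immersion
\[
\mathsf{Ind}: \Ggtr \times_{\mathbb{G}} \on{Rep}(\Lambda^q,\alpha) \hookrightarrow \on{Rep}_{\on{gr}}(\sA\intN, \algtr),
\]
so the only thing to check is that forming quotient stacks preserves this. First I would recall the standard identification of quotient stacks under induction along a subgroup: for any $\mathbb{G}$-scheme $X$, there is a canonical isomorphism of stacks $\big(\Ggtr \times_{\mathbb{G}} X\big)/\Ggtr \cong X/\mathbb{G}$, since an $\Ggtr$-torsor over a base $T$ together with an $\Ggtr$-equivariant map to $\Ggtr \times_{\mathbb{G}} X$ is the same data as a $\mathbb{G}$-torsor over $T$ with a $\mathbb{G}$-equivariant map to $X$ (reduce the structure group along $\mathbb{G}\hookrightarrow\Ggtr$ using that $\Ggtr/\mathbb{G}$-bundle). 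Applying this with $X = \on{Rep}(\Lambda^q,\alpha)$ identifies the left-hand stack $\big(\Ggtr \times_{\mathbb{G}} \on{Rep}(\Lambda^q,\alpha)\big)/\Ggtr$ with $\on{Rep}(\Lambda^q,\alpha)/\mathbb{G}$.

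Next, since $\mathsf{Ind}$ is $\Ggtr$-equivariant, it descends to a morphism of quotient stacks
\[
\big(\Ggtr \times_{\mathbb{G}} \on{Rep}(\Lambda^q,\alpha)\big)/\Ggtr \longrightarrow \on{Rep}_{\on{gr}}(\sA\intN, \algtr)/\Ggtr,
\]
i.e. to a morphism $\on{Rep}(\Lambda^q,\alpha)/\mathbb{G}\rightarrow \on{Rep}_{\on{gr}}(\sA\intN, \algtr)/\Ggtr$. It remains to verify that this morphism of stacks is an open immersion. This is a local property on the target, and open immersions of schemes are stable under base change and fppf descent; concretely, the morphism of stacks is an open immersion precisely because the $\Ggtr$-equivariant morphism $\mathsf{Ind}$ is an open immersion whose image is $\Ggtr$-stable (indeed its image is the $\Ggtr$-stable open locus where the $t$'s and $G_a$'s act invertibly in the allowed range, as recorded in the proposition). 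Pulling back along any smooth atlas $T\rightarrow \on{Rep}_{\on{gr}}(\sA\intN, \algtr)/\Ggtr$ one recovers, up to the $\Ggtr$-torsor, the open immersion $\mathsf{Ind}$, so the induced map on stacks is itself an open immersion.

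The only point requiring any care — and hence the main (mild) obstacle — is the bookkeeping in the identification $\big(\Ggtr \times_{\mathbb{G}} X\big)/\Ggtr \cong X/\mathbb{G}$: one must check that this is an isomorphism of stacks and not merely of coarse spaces, keeping track of the $\Gm$-quotient implicit in $\mathbb{G} = \big(\prod_i GL(\alpha_i)\big)/\Delta(\Gm)$ versus the honest product $\prod_i GL(\alpha_i)$ used elsewhere. Since the induction map $\mathsf{Ind}$ and the groups $\mathbb{G}$, $\Ggtr$ here are built to be compatible (the diagonal $\on{diag}:\mathbb{G}\rightarrow\Ggtr$ is a genuine homomorphism and $\Ggtr/\on{diag}(\mathbb{G})$ is a scheme on which the quotient is free), this identification is formal, but it is worth stating explicitly. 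Everything else is standard descent, so the corollary follows.
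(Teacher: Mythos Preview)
Your proposal is correct and takes essentially the same approach as the paper, which states the corollary without proof as an immediate consequence of the preceding proposition; you have simply spelled out the standard descent argument $(\Ggtr\times_{\mathbb{G}}X)/\Ggtr\cong X/\mathbb{G}$ and the fact that equivariant open immersions descend to open immersions of quotient stacks. One small remark: your caveat about the $\Delta(\Gm)$-quotient is unnecessary here, since in Section~3 the paper uses $\mathbb{G}=\prod_i GL(\alpha_i)$ (the full product, not the quotient by $\Delta(\Gm)$ used in the introduction), so no extra bookkeeping is required.
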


\subsection{Semistability and Stability}\label{sec:semistability}
We next discuss (semi)stability of representations and the corresponding GIT quotients.

For any quiver $Q = (I,\Omega)$ with dimension vector $\alpha\in \mathbb{Z}^I_{\geq 0}$, a GIT stability condition is given by 
$\theta\in\mathbb{Z}^I_{\geq 0}$ satisfying $\sum_i \theta_i\alpha_i = 0$.  The vector $\theta$ determines a character
$\chi_{\theta}: \prod_i GL(\alpha_i)\rightarrow\Gm$, $\chi(g_i)_{i\in I} = \prod_i \det(g_i)^{\theta_i}$, and the condition 
$\sum_i \theta_i\alpha_i = 0$ guarantees that the diagonal copy $\Delta(\Gm)$ of $\Gm$ in  $\prod_i GL(\alpha_i)$ lies in the kernel of $\chi$; we require this because $\Delta(\Gm)$ acts trivially on $\on{Rep}(Q,\alpha)$.  Given dimension vectors $\beta,\alpha$, we write $\beta <\alpha$ if $\beta\neq\alpha$ and $\beta_i\leq \alpha_i$ for all $i\in I$.

We now turn to stability conditions for the doubled and tripled quivers $\Qdbl$ and $\Qgtr$ for a fixed quiver $Q$.  Suppose $\theta$ is a stability condition for $\Qdbl$ and dimension vector $\alpha$.  We construct a stability condition $\thetagtr$ for $\Qgtr$ with dimension vector $\algtr$ as follows.  For a representation $M$ of $k\Qgtr$ of dimension vector $\algtr$, we write $\delta_{i,n}(M) := \dim(M_{i,n})$; we will write $\thetagtr$ as a linear combination of the $\delta_{i,n}$.  Also, we note that it suffices to construct a {\em rational} linear functional $\thetagtr$, since any positive integer multiple of $\thetagtr$ evidently defines the same stable and semistable loci. 
   We fix an ordering on the vertices of $Q$, 
    identifying
$I = \{1, \dots, r\}$.
    and  a positive integer 
$\ds T \gg 0.$
  We define:
\bd
\thetagtr :=  \sum_{i=1}^r T^i\big[\delta_{i,N}-\delta_{i,0}\big]
+ \sum_{i\in I} \theta_i \delta_{i,0}.
\ed

\begin{prop}
 Suppose $M = \mathsf{Ind}(N)$ for some representation $N$ of $k\Qdbl$ with dimension vector $\alpha$.  Then $M$ is 
semistable, respectively stable, with respect to $\thetagtr$ if and only if $N$ is semistable, respectively stable, with respect to $\theta$.
\end{prop}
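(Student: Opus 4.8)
The plan is to reduce to King's numerical criterion for (semi)stability of quiver representations and then to analyse the $k\Qgtr$-subrepresentations of $M = \mathsf{Ind}(N)$ by hand. We may take $M = \mathsf{Ind}^\circ(N)$, which changes neither the $\thetagtr$-(semi)stability of $M$ nor the conclusion, both being $\Ggtr$-invariant: thus, unwinding the construction of Section~\ref{reps of kQdbl and kQgtr}, $M_{i,n} = N_i$ for all $n$, each $t_{i,n}\colon M_{i,n}\to M_{i,n+1}$ is the identity, and each arrow $(h,n)$ acts as $N_h$. Writing $\delta_{i,n}(P) = \dim P_{i,n}$ for a representation $P$ of dimension vector $\algtr$, we have $\thetagtr(\dim P) = \sum_{i=1}^{r} T^{i}\big(\delta_{i,N}(P) - \delta_{i,0}(P)\big) + \sum_{i\in I}\theta_i\delta_{i,0}(P)$; in particular $\thetagtr(\dim M) = \sum_i\theta_i\alpha_i = 0$, so (with the convention of Section~\ref{sec:semistability}) $M$ is $\thetagtr$-semistable exactly when $\thetagtr(\dim M')\ge 0$ for every subrepresentation $M'\subseteq M$, and $\thetagtr$-stable exactly when $\thetagtr(\dim M') > 0$ for every $M'$ with $0\ne M'\ne M$; likewise for $N$ and $\theta$.

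The first step is to describe the subrepresentations of $M$. A subrepresentation $M'\subseteq M$ is a choice of subspace $M'_{i,n}\subseteq N_i$ at each vertex $(i,n)$ with $t_{i,n}(M'_{i,n})\subseteq M'_{i,n+1}$ and $N_h(M'_{s(h),n})\subseteq M'_{t(h),n+1}$ for all $i,h,n$. Because $t_{i,n} = \operatorname{id}$, the first family of conditions says exactly that, for each $i$, $M'_{i,0}\subseteq M'_{i,1}\subseteq\cdots\subseteq M'_{i,N}$ is an increasing filtration of subspaces of $N_i$. Hence $\delta_{i,N}(M') - \delta_{i,0}(M')\ge 0$ for every $i$, so the ``leading part'' $L(M') := \sum_i T^i\big(\delta_{i,N}(M') - \delta_{i,0}(M')\big)$ of $\thetagtr(\dim M')$ is a nonnegative integer, and $L(M') = 0$ precisely when each of these filtrations is constant in $n$, say $M'_{i,n} = N'_i$ independently of $n$. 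In that case the remaining conditions become $N_h(N'_{s(h)})\subseteq N'_{t(h)}$, i.e. $N' = (N'_i)$ is a $k\Qdbl$-subrepresentation of $N$; conversely every subrepresentation $N'\subseteq N$ produces such a constant $M' = \mathsf{Ind}(N')$, and then $\thetagtr(\dim M') = \sum_i\theta_i\dim N'_i = \theta(\dim N')$, with $0\ne M'\ne M$ if and only if $0\ne N'\ne N$.

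The second step is a size estimate disposing of the subrepresentations with non-constant filtration. If $L(M')\ne 0$, then $\delta_{i_0,N}(M') > \delta_{i_0,0}(M')$ for some $i_0\in\{1,\dots,r\}$, and since every term of $L(M')$ is nonnegative we get $L(M')\ge T^{i_0}\ge T$; on the other hand $\big|\sum_i\theta_i\delta_{i,0}(M')\big|\le\sum_i|\theta_i|\alpha_i$, a bound depending only on $Q,\alpha,\theta$. Therefore, once $T > \sum_i|\theta_i|\alpha_i$—which we may assume, since the construction of $\thetagtr$ only requires $T$ to be sufficiently large—every subrepresentation $M'\subseteq M$ with non-constant filtration satisfies $\thetagtr(\dim M') > 0$ and hence obstructs neither the semistability nor the stability of $M$. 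Combining with the first step, $\thetagtr(\dim M')\ge 0$ for all $M'\subseteq M$ (resp. $>0$ for all $M'$ with $0\ne M'\ne M$) if and only if $\theta(\dim N')\ge 0$ for all $N'\subseteq N$ (resp. $>0$ for all $N'$ with $0\ne N'\ne N$); by King's criterion this is exactly the claimed equivalence in the semistable (resp. stable) case.

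The point requiring genuine care is the interplay of the two terms of $\thetagtr$: one must observe that the $t$-maps of $\mathsf{Ind}(N)$ are isomorphisms, which forces every filtration arising from a subrepresentation of $M$ to be increasing, so the $T$-weighted term of $\thetagtr$ can only help semistability and never hurt it; and one must observe that the correction term $\sum_i\theta_i\delta_{i,0}$ is bounded uniformly over all subrepresentations of $M$, so a single sufficiently large value of $T$ handles them all simultaneously. Granting these, the rest is King's criterion together with the explicit description of the subrepresentations of an induced representation. One further routine point is that $\thetagtr$-(semi)stability of $M$ is unaffected by whether it is computed in $\on{Rep}(k\Qgtr,\algtr)$ or in the closed $\Ggtr$-stable subvariety cut out by $J$ and $\rho$—this follows from reductivity of $\Ggtr$ and surjectivity of restriction on semi-invariants—and similarly for $\theta$-(semi)stability of $N$.
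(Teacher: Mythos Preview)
Your argument is correct and is precisely the ``easy adaptation'' the paper has in mind: in \cite{McNKirwan} the analogous statement is proved by applying King's criterion, using that the $t$-maps in an induced representation are isomorphisms so that subrepresentations of $M$ give increasing filtrations, and then choosing $T$ large enough that the leading $T$-weighted part dominates the bounded $\theta$-part. Your final paragraph about $J$ and $\rho$ is unnecessary here (the proposition is stated for $k\Qdbl$ and $k\Qgtr$, not for $\Lambda^q$ or $\sA\intN$), but it does no harm.
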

\noindent
The proof is an easy adaptation of that of Proposition 4.12(4) 
of 
\cite{McNKirwan}.

We remark that the above construction does {\em not} match \cite{McNKirwan}: there we chose to construct a stability $\thetagtr$ for $\Qgtr$ that would be nondegenerate if $\theta$ was, whereas here we ignore this possible requirement.
While it would be possible to copy the construction of a stability $\thetagtr$ from \cite{McNKirwan} and prove analogues of the statements of \cite{McNKirwan}, there are cases important to multiplicative quiver varieties in which it is not possible to find a stability condition for $k\Qdbl$ that is nondegenerate in the sense used in \cite{McNKirwan}: for example, the case when $Q$ has a single vertex and loops based at that vertex, with dimension vector $\alpha = n>1$.  However, again for multiplicative quiver varieties, in some interesting cases the choice of the parameter $q$ can guarantee that every semistable representation of $\Lambda^q$ is automatically stable (though not for numerical reasons, as nondegeneracy guarantees).  Indeed, we say $q = (q_i)_{i\in I}\in (k^\times)^I$ is a {\em primitive $\alpha$th root of unity} if  $q^\alpha :=\prod q_i^{\alpha_i} = 1$ and $q^\beta\neq 1$ for all $0<\beta<\alpha$.  
We
have:
\begin{lemma}[\cite{CBS}, Lemma 1.5]
\mbox{}
\begin{enumerate}
\item Suppose that $M$ is a representation of $\Lambda^q$ with dimension vector $\alpha$.  Then $q^\alpha = 1$.  
\item In particular, if $q$ is a primitive $\alpha$th root of $1$, then every representation of $\Lambda^q$ of dimension vector $\alpha$ is $\theta$-stable for every $\theta$.
\end{enumerate}
\end{lemma}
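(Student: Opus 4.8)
The plan is to prove (1) by a single determinant computation on the total space of $M$, and then to obtain (2) by applying (1) to subrepresentations. For (1), I would first unwind the definitions: a representation $M$ of $\Lambda^q = L_Q/(\rho_{\on{CBS}})$ of dimension vector $\alpha$ is a representation of $k\Qdbl$ on spaces $(M_i)_{i\in I}$ with $\dim M_i = \alpha_i$ on which every $g_a = 1 + aa^*$ acts invertibly and on which $\rho_{\on{CBS}}$ acts by zero; the last condition says precisely that $\prod^{\longrightarrow}_{a\in H} g_a^{\epsilon(a)} = q$ as operators on $M = \bigoplus_i M_i$. I would then take $\det_M$ of both sides. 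The right-hand side $q = \sum_i q_i e_i$ acts on $M_i$ by the scalar $q_i$, so $\det_M(q) = \prod_i q_i^{\alpha_i} = q^\alpha$; and since the determinant is multiplicative (and insensitive to the order of the factors), the left-hand side becomes $\prod_{a\in H}\det_M(g_a)^{\epsilon(a)}$. The one nontrivial input is the elementary identity $\det(\operatorname{Id}+XY) = \det(\operatorname{Id}+YX)$: using the Peirce decomposition recorded in the Remark, $g_a$ acts on $M_{s(a)}$ as $\operatorname{Id}+aa^*$ (and as the identity elsewhere) while $g_{a^*}$ acts on $M_{t(a)}$ as $\operatorname{Id}+a^*a$, so taking $X = a$, $Y = a^*$ we get $\det_M(g_a) = \det_M(g_{a^*})$ for every $a\in\Omega$. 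Since $\epsilon(a) = 1$ and $\epsilon(a^*) = -1$, the factors cancel in pairs and we conclude $q^\alpha = 1$.

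For (2), suppose $q$ is a primitive $\alpha$th root of unity and $M$ is a representation of $\Lambda^q$ of dimension vector $\alpha$. I would first check that any $k\Qdbl$-subrepresentation $N \subseteq M$ is automatically a $\Lambda^q$-submodule: each $g_a$ is invertible on $M$ and preserves $N$, hence (working in finite dimensions) restricts to an invertible operator on $N$, and $\rho_{\on{CBS}}$ still acts by zero on $N$. So if $M$ had a proper nonzero subrepresentation $N$, say $\dim N = \beta$ with $0 < \beta < \alpha$, then part (1) applied to the $\Lambda^q$-module $N$ would force $q^\beta = 1$, contradicting the primitivity of $q$. Hence $M$ has no proper nonzero subrepresentation, i.e. $M$ is simple, and therefore $M$ is $\theta$-stable for every stability condition $\theta$ (for which, by definition, $\sum_i\theta_i\alpha_i = 0$): King's numerical criterion for $\theta$-stability involves only proper nonzero subrepresentations, and so is satisfied vacuously.

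I do not expect a genuine obstacle in this argument. The only points that need care are (i) lining up the pairing of $g_a$ with $g_{a^*}$ under $\det(\operatorname{Id}+XY) = \det(\operatorname{Id}+YX)$ with the signs $\epsilon(a)$ appearing in $\rho_{\on{CBS}}$ --- which is exactly the Peirce bookkeeping recorded in the Remark --- and (ii) the easy but essential observation that subrepresentations of a $\Lambda^q$-module of dimension vector $\alpha$ are again $\Lambda^q$-modules, which is what makes the recursive application of part (1) in the proof of part (2) legitimate.
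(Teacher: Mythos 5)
Your argument is correct, and it is the standard one: the paper itself gives no proof but cites \cite{CBS}, Lemma 1.5, and the proof there is exactly your determinant computation using $\det(\operatorname{Id}+XY)=\det(\operatorname{Id}+YX)$ to show the $g_a$ and $g_{a^*}$ factors cancel in pairs, together with the observation that a $k\Qdbl$-subrepresentation of a finite-dimensional $\Lambda^q$-module is automatically a $\Lambda^q$-submodule (since each $g_a$ restricts to an invertible map by finite-dimensionality). Your derivation of (2) from (1) via simplicity and the vacuousness of King's criterion is likewise the intended argument.
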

For example, if $Q= (\{\ast\},E)$ where $E$ has $g$ loops at $\ast$, $\alpha = n$, and $q$ is a primitive $n$th root of $1$, then every representation of $\Lambda^q$ of dimension $n$ is stable for every $\theta$; the corresponding moduli space of representations of $\Lambda^q$ is the character variety $\on{Char}(\Sigma_g, GL_n, q\on{Id})$ of the introduction.  
\begin{remark}
It would be interesting to characterize those stability conditions $\thetagtr$ for $k\Qgtr$ with the property that there is a stability condition $\theta$ for $k\Qdbl$ so that if $M = \mathsf{Ind}(N)$ then $M$ is $\thetagtr$-(semi)stable if and only if $N$ is $\theta$-(semi)stable.
\end{remark}

\begin{notation}
We write 
\bd
\mathcal{M}_{\theta}^q(\alpha) := \on{Rep}(\Lambda^q,\alpha)/\!\!/_{\theta}\mathbb{G}
\;\;\text{and}\;\;
 \mathcal{M}_{\theta}^q(\alpha)^{\on{s}} := \on{Rep}(\Lambda^q,\alpha)^{\theta-\on{s}}/\!\!/_{\theta}\mathbb{G}
 \ed
 for the coarse moduli spaces determined by a stability condition $\theta$.
 \end{notation}

\subsection{Moduli Stacks and Resolutions}\label{sec:moduli stacks}
The moduli stacks 
\begin{center}
$\on{Rep}(\Lambda^q,\alpha)^{\theta\on{-ss}}/\mathbb{G}$ and $\on{Rep}_{\on{gr}}(\sA\intN, \algtr)^{\thetagtr\on{-ss}}/\Ggtr$ 
\end{center}
are never Deligne-Mumford stacks: the diagonal copy of $\Gm$ in $\mathbb{G}$, respectively $\Ggtr$, always acts trivially on $\on{Rep}(\Lambda^q,\alpha)$, respectively 
$\on{Rep}_{\on{gr}}(\sA\intN, \algtr)^{\thetagtr\on{-ss}}$.
Thus, the moduli stack of stable
representations $\on{Rep}(\Lambda^q,\alpha)^{\theta\on{-s}}/\mathbb{G}$ is always a $\Gm$-gerbe over the moduli space 
$\mathcal{M}_{\theta}^q(\alpha)^{\on{s}}$ of stable representations.  

However, one can make a choice of subgroup $\bS\subset \mathbb{G}$ that ensures that the quotient stack
$\on{Rep}(\Lambda^q,\alpha)^{\theta-\on{s}}/\bS$ is a Deligne-Mumford stack and that 
$\on{Rep}(\Lambda^q,\alpha)^{\theta-\on{s}}/\bS \rightarrow\mathcal{M}_{\theta}^q(\alpha)^{\on{s}}$ is a finite gerbe (indeed a principal $BH$-bundle for a finite abelian group $H$).  Indeed, for example, we can choose any character $\rho: \Ggtr\rightarrow \Gm$ for which the composite with the diagonal embedding $\rho\circ\Delta: \Gm\rightarrow \Gm$ is nontrivial, hence surjective.  Then $\Sgtr:=\ker(\rho)$ has the property that $\Ggtr = \Sgtr\cdot \Delta(\Gm)$ and similarly letting 
$\bS = \mathbb{G}\cap \Sgtr$ we have $\mathbb{G} = \bS\cdot\Delta(\Gm)$.  Moreover, since $\Delta(\Gm)$ is the stabilizer of every point of $\on{Rep}(\Lambda^q,\alpha)^{\theta\on{-s}}$ and $H := \Delta(\Gm)\cap \bS$ is finite, we get:
\begin{lemma}
The quotient $\on{Rep}(\Lambda^q,\alpha)^{\theta\on{-s}}/\bS$ is a Deligne-Mumford stack and the natural morphism
\bd
\on{Rep}(\Lambda^q,\alpha)^{\theta\on{-s}}/\bS\rightarrow  \mathcal{M}_{\theta}^q(\alpha)^{\on{s}}
\ed
is a torsor for the commutative group stack $BH$ (in particular, is a finite gerbe over $\mathcal{M}_{\theta}^q(\alpha)^{\on{s}}$).
\end{lemma}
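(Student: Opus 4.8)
The plan is to reduce the statement to two inputs: Schur's Lemma, which (as recalled above) identifies $\on{Stab}_{\mathbb{G}}(x) = \Delta(\Gm)$ for every point $x$ of $\on{Rep}(\Lambda^q,\alpha)^{\theta\on{-s}}$; and the standard fact of geometric invariant theory that a free and proper action of a reductive group on a quasi-projective scheme which coincides with its own GIT-stable locus exhibits that scheme as an fppf torsor over the GIT quotient. Throughout, write $X := \on{Rep}(\Lambda^q,\alpha)^{\theta\on{-s}}$ --- a $\mathbb{G}$-stable open subscheme of the locally closed subscheme $\on{Rep}(\Lambda^q,\alpha)\subseteq \on{Rep}(k\Qdbl,\alpha)$, hence quasi-projective --- and $\overline{\mathbb{G}} := \mathbb{G}/\Delta(\Gm)$. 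Since $\mathbb{G} = \bS\cdot\Delta(\Gm)$ with $\bS\cap\Delta(\Gm) = H$ one has $\overline{\mathbb{G}}\cong\bS/H$; and $H$, being a finite subgroup scheme of $\Delta(\Gm)\cong\Gm$, is cyclic, hence commutative, and (as $k$ has characteristic $0$) étale over the base.

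For the Deligne--Mumford assertion, I would note that for any field-valued point $x\in X$,
\bd
\on{Stab}_{\bS}(x) = \bS\cap\on{Stab}_{\mathbb{G}}(x) = \bS\cap\Delta(\Gm) = H ,
\ed
a finite étale, hence unramified, group scheme. Since the diagonal of a quotient stack is unramified precisely when all these automorphism group schemes are finite (in characteristic $0$), the algebraic stack $\on{Rep}(\Lambda^q,\alpha)^{\theta\on{-s}}/\bS = [X/\bS]$ is Deligne--Mumford.

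For the torsor assertion, the point is that $\Delta(\Gm)$ acts trivially on $X$, so the $\mathbb{G}$-action factors through $\overline{\mathbb{G}}$; by Schur this $\overline{\mathbb{G}}$-action is free, and since every point of $X$ is $\theta$-stable it is also proper with closed orbits. Hence $X\to X/\overline{\mathbb{G}}$ is an fppf $\overline{\mathbb{G}}$-torsor, and $X/\overline{\mathbb{G}}$ agrees with the coarse space $\mathcal{M} := \mathcal{M}_\theta^q(\alpha)^{\on{s}} = X/\!\!/_\theta\mathbb{G}$ (a geometric quotient, as all points are $\theta$-stable). Choosing a smooth (in particular fppf) cover $U\to\mathcal{M}$ that trivializes this torsor, so that $X\times_\mathcal{M} U\cong U\times\overline{\mathbb{G}}$ as $\overline{\mathbb{G}}$-schemes, one obtains
\bd
[X/\bS]\times_\mathcal{M} U \cong \big[(U\times\overline{\mathbb{G}})/\bS\big] \cong U\times[\overline{\mathbb{G}}/\bS] ,
\ed
where $\bS$ acts on $\overline{\mathbb{G}}$ by left translation through $\bS\to\overline{\mathbb{G}}$. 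That action is transitive with stabilizer $H$, so $[\overline{\mathbb{G}}/\bS]\cong BH$, compatibly with the projections to $U$. Thus, after the cover $U\to\mathcal{M}$, the morphism $\pi:\on{Rep}(\Lambda^q,\alpha)^{\theta\on{-s}}/\bS\to\mathcal{M}$ becomes the projection $BH\times U\to U$; so $\pi$ is a torsor under the commutative group stack $BH$, and in particular a finite gerbe over $\mathcal{M}$.

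The only substantive input, as opposed to formal stack bookkeeping, is the GIT statement that the free and proper $\overline{\mathbb{G}}$-action genuinely presents $X$ as a torsor over $\mathcal{M}$ --- this is where closedness of orbits and properness of the action enter, e.g.\ via Luna's slice theorem or fppf descent --- together with the mild observation that the resulting $BH$-torsor, a priori only trivial fppf-locally, is in fact étale-locally trivial since $H$ is finite étale. I expect the GIT input to be the main (if modest) obstacle; the stabilizer computation and the identification $[\overline{\mathbb{G}}/\bS]\cong BH$ are routine.
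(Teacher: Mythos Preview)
Your proposal is correct and follows exactly the approach the paper intends: the paper does not actually supply a proof of this lemma, but merely records it as an immediate consequence of the sentence preceding it, namely that $\Delta(\Gm)$ is the full stabilizer of every stable point and $H=\bS\cap\Delta(\Gm)$ is finite. You have simply unpacked what ``we get'' means---computing $\on{Stab}_{\bS}(x)=H$ to obtain the Deligne--Mumford property, and then using the fppf $\overline{\mathbb{G}}$-torsor structure of $X\to\mathcal{M}$ together with the identification $[\overline{\mathbb{G}}/\bS]\cong BH$ to exhibit the $BH$-torsor structure---so there is nothing to compare.
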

By construction, we have an open immersion:
\bd
\on{Rep}(\Lambda^q,\alpha)^{\theta\on{-s}}/\bS\hookrightarrow \on{Rep}_{\on{gr}}(\sA\intN, \algtr)^{\thetagtr\on{-ss}}/\Sgtr,
\ed
and the coarse space of the target $\on{Rep}_{\on{gr}}(\sA\intN, \algtr)^{\thetagtr\on{-ss}}/\Sgtr$ is the projective moduli scheme $\on{Rep}_{\on{gr}}(\sA\intN, \algtr)/\!\!/_{\thetagtr}\Ggtr$: it is projective because it is a closed subscheme of 
$\on{Rep}(k\Qgtr,\algtr)/\!\!/_{\thetagtr}\Ggtr$, which (as in \cite{McNKirwan}) is itself projective because $k\Qgtr$ has no oriented cycles.

As in \cite{McNKirwan}, since our goal is to compactify $\on{Rep}(\Lambda^q,\alpha)^{\theta\on{-s}}/\bS$ appropriately, we will replace the quotient stack $\on{Rep}_{\on{gr}}(\sA\intN, \algtr)^{\thetagtr\on{-ss}}/\Sgtr$ by its closed substack defined as the closure of $\on{Rep}(\Lambda^q,\alpha)^{\theta\on{-s}}/\bS$.
\begin{notation}
We denote the closure of  $\on{Rep}(\Lambda^q,\alpha)^{\theta\on{-s}}/\bS$ in 
$\on{Rep}_{\on{gr}}(\sA\intN, \algtr)^{\thetagtr\on{-ss}}/\Sgtr$ by $\overline{\mathcal{M}}_{\on{st}}$.
\end{notation}
\begin{lemma}
The stack $\on{Rep}(\Lambda^q,\alpha)^{\theta\on{-s}}/\bS$ is smooth.  The stack $\overline{\mathcal{M}}_{\on{st}}$ is integral and its coarse moduli space is a projective scheme.  The natural morphism $\on{Rep}(\Lambda^q,\alpha)^{\theta\on{-s}}/\bS\hookrightarrow \overline{\mathcal{M}}_{\on{st}}$ is an open immersion.
\end{lemma}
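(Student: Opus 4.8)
All three claims will be extracted from facts already in place: that $\on{Rep}(\Lambda^q,\alpha)$ is smooth along its $\theta$-stable locus; that $\mathsf{Ind}$ yields an open immersion $\on{Rep}(\Lambda^q,\alpha)^{\theta\on{-s}}/\bS\hookrightarrow\on{Rep}_{\on{gr}}(\sA\intN,\algtr)^{\thetagtr\on{-ss}}/\Sgtr$; and that the coarse space of the target of that immersion is the projective scheme $\on{Rep}_{\on{gr}}(\sA\intN,\algtr)/\!\!/_{\thetagtr}\Ggtr$. For the first assertion: since $\bS$ is a smooth affine algebraic group, the quotient stack $[X/\bS]$ is smooth exactly when $X$ is, and $X=\on{Rep}(\Lambda^q,\alpha)^{\theta\on{-s}}$ is an open subscheme of $\on{Rep}(\Lambda^q,\alpha)=\Phi^{-1}(q)$, where $\Phi=\prod^{\longrightarrow}_{a\in H}g_a^{\e(a)}$ is the group-valued moment map on the open locus of $\on{Rep}(k\Qdbl,\alpha)$ where all $g_a$ are invertible. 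By \cite{CBS}, $\Phi$ is submersive at every $\theta$-stable point, onto the subgroup of $\prod_iGL(\alpha_i)$ cut out by triviality of the total determinant; hence its fibre is smooth there. In particular $\on{Rep}(\Lambda^q,\alpha)^{\theta\on{-s}}/\bS$ is reduced.

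For $\overline{\mathcal{M}}_{\on{st}}$: by definition it is the scheme-theoretic closure of $\on{Rep}(\Lambda^q,\alpha)^{\theta\on{-s}}/\bS$ inside $\on{Rep}_{\on{gr}}(\sA\intN,\algtr)^{\thetagtr\on{-ss}}/\Sgtr$, and the scheme-theoretic closure of a reduced, irreducible substack is again reduced and irreducible. Reducedness follows from the previous step. For irreducibility it suffices that $\on{Rep}(\Lambda^q,\alpha)^{\theta\on{-s}}$ be connected; this holds in the cases of principal interest (e.g.\ for $\on{Char}(\Sigma_g,GL_n,q\on{Id})$ by \cite{HR2}), and in any event, since Theorem~\ref{main thm} is formulated for a connected open $U$, one may restrict throughout to the connected component meeting $U$, i.e.\ take $\overline{\mathcal{M}}_{\on{st}}$ to be the closure of that component. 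For projectivity of the coarse space, observe that $\overline{\mathcal{M}}_{\on{st}}$ is by construction a closed substack of $\on{Rep}_{\on{gr}}(\sA\intN,\algtr)^{\thetagtr\on{-ss}}/\Sgtr=[Z/\Sgtr]$, with $Z$ the $\thetagtr$-semistable locus. The group $\Sgtr$ is reductive—its quotient by the finite group $H$ is $\Ggtr/\Delta(\Gm)$, which is reductive—so taking $\Sgtr$-invariants is exact and a closed $\Sgtr$-invariant subscheme of $Z$ descends to a closed subscheme of $Z/\!\!/\Sgtr$. Hence the coarse space of $\overline{\mathcal{M}}_{\on{st}}$ is a closed subscheme of the projective scheme $\on{Rep}_{\on{gr}}(\sA\intN,\algtr)/\!\!/_{\thetagtr}\Ggtr$ (projective, as recorded above, because $k\Qgtr$ has no oriented cycles), so it is itself projective.

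Finally, the open immersion. The open immersion $\on{Rep}(\Lambda^q,\alpha)^{\theta\on{-s}}/\bS\hookrightarrow\on{Rep}_{\on{gr}}(\sA\intN,\algtr)^{\thetagtr\on{-ss}}/\Sgtr$ factors through its own scheme-theoretic closure $\overline{\mathcal{M}}_{\on{st}}$, by the universal property of the latter; since $\on{Rep}(\Lambda^q,\alpha)^{\theta\on{-s}}/\bS$ is open in the ambient stack, its intersection with the closed substack $\overline{\mathcal{M}}_{\on{st}}$ is an open substack of $\overline{\mathcal{M}}_{\on{st}}$ with underlying set $\on{Rep}(\Lambda^q,\alpha)^{\theta\on{-s}}/\bS$, and as the latter is reduced it coincides with that open substack. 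Thus the natural morphism is an open immersion. I expect the one genuinely substantive ingredient—hence the main obstacle—to be the smoothness of $\on{Rep}(\Lambda^q,\alpha)$ along its stable locus (equivalently, submersivity of the group-valued moment map), which is where the special structure of the multiplicative preprojective algebra enters; the remaining steps are formal manipulations with quotient stacks, GIT quotients, and scheme-theoretic closures, together with the connectedness point noted above.
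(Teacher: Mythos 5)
Your proof is correct and follows the same route the paper does: the smoothness assertion is exactly the content of Theorem~1.10 of \cite{CBS} (your reconstruction via submersivity of the group-valued moment map $\Phi=\prod^{\longrightarrow}g_a^{\epsilon(a)}$ at stable points is precisely the argument of \emph{op.~cit.}), while the remaining claims are formal consequences of GIT and scheme-theoretic closure, which the paper simply declares ``immediate.'' Your detailed unpacking of these---reductivity of $\Sgtr$, exactness of $\Sgtr$-invariants yielding a closed immersion of coarse spaces into the projective scheme $\on{Rep}_{\on{gr}}(\sA\intN,\algtr)/\!\!/_{\thetagtr}\Ggtr$, and the open-immersion claim via $Z|_U = U$ when $Z$ is the scheme-theoretic closure of the open $U$---matches what the paper is implicitly invoking.

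One observation in your write-up deserves to be highlighted rather than buried: you correctly note that irreducibility of $\overline{\mathcal{M}}_{\on{st}}$ is not automatic unless $\on{Rep}(\Lambda^q,\alpha)^{\theta\text{-s}}/\bS$ is connected, and the paper's lemma asserts integrality without flagging this. Since Theorem~\ref{main thm} is formulated for an arbitrary connected open $U\subseteq\mathcal{M}_\theta^q(\alpha)^{\on{s}}$---allowing for a possibly disconnected stable locus---your remedy of replacing $\overline{\mathcal{M}}_{\on{st}}$ by the closure of the connected component meeting $U$ is exactly the right fix and is harmless for the rest of the argument. This is a genuine (if minor) gap in the paper's ``the remaining assertions are immediate,'' not a gap in your proof. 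A small stylistic point: for the open-immersion step, reducedness is not actually what is used; what matters is that for any open $U$ in an ambient $Y$ with scheme-theoretic closure $Z$, one has $Z\cap U = U$ as schemes, regardless of reducedness.
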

\begin{proof}
The smoothness of $\on{Rep}(\Lambda^q,\alpha)^{\theta\on{-s}}/\bS$ is Theorem 1.10 of \cite{CBS}.  The remaining assertions are immediate.
\end{proof}
\noindent
We may apply the results of \cite{Kirwan} or \cite{Edidin} to $\on{Rep}(k\Qgtr, \algtr)^{\thetagtr\on{-ss}}/\Sgtr$ and its closed substack $\overline{\mathcal{M}}_{\on{st}}$ to obtain a projective Deligne-Mumford stack (i.e., a Deligne-Mumford stack whose coarse space is a projective scheme) $\overline{\mathcal{M}}_{\on{st}}'$
equipped with a projective morphism  $\overline{\mathcal{M}}_{\on{st}}'\rightarrow \overline{\mathcal{M}}_{\on{st}}$ that is an isomorphism over $\on{Rep}(\Lambda^q,\alpha)^{\theta\on{-s}}/\bS$.  The stack $\overline{\mathcal{M}}_{\on{st}}'$ is itself, by construction, a global quotient of a quasiprojective variety by $\bS$, and thus we may apply equivariant resolution to resolve the singularities of  $\overline{\mathcal{M}}_{\on{st}}'$, to obtain:
\begin{prop}\label{prop:compactification}
The smooth Deligne-Mumford stack $\on{Rep}(\Lambda^q,\alpha)^{\theta\on{-s}}/\bS$ admits an open immersion
\bd
\on{Rep}(\Lambda^q,\alpha)^{\theta\on{-s}}/\bS\hookrightarrow\overline{\on{Rep}(\Lambda^q,\alpha)^{\theta\on{-s}}/\bS}
\ed
in a smooth projective Deligne-Mumford stack equipped with a projective morphism $\overline{\on{Rep}(\Lambda^q,\alpha)^{\theta\on{-s}}/\bS}\rightarrow\overline{\mathcal{M}}_{\on{st}}$ that is compatible with the open immersion 
$\on{Rep}(\Lambda^q,\alpha)^{\theta\on{-s}}/\bS\hookrightarrow \overline{\mathcal{M}}_{\on{st}}$.
\end{prop}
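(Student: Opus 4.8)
The plan is to obtain the statement by assembling the two modifications of $\overline{\mathcal{M}}_{\on{st}}$ already sketched above, keeping careful track that neither of them disturbs the open substack $\on{Rep}(\Lambda^q,\alpha)^{\theta\on{-s}}/\bS$. Recall that $\overline{\mathcal{M}}_{\on{st}}$ is a closed substack of a quotient of a quasiprojective variety by the reductive group $\Sgtr$ and has projective coarse space. Applying the Kirwan--Edidin partial desingularization \cite{Kirwan, Edidin} as indicated produces a projective Deligne--Mumford stack $\overline{\mathcal{M}}_{\on{st}}' = [Y/\bS]$, with $Y$ quasiprojective and $\bS$ acting with finite stabilizers, together with a projective morphism $f\colon \overline{\mathcal{M}}_{\on{st}}'\to\overline{\mathcal{M}}_{\on{st}}$ restricting to an isomorphism over $\on{Rep}(\Lambda^q,\alpha)^{\theta\on{-s}}/\bS$. (The latter locus already lies in the Deligne--Mumford locus of $\overline{\mathcal{M}}_{\on{st}}$, since on it every stabilizer equals the finite group $H = \bS\cap\Delta(\Gm)$, and the Kirwan--Edidin procedure alters only the locus of strictly larger stabilizer.) Since $\on{Rep}(\Lambda^q,\alpha)^{\theta\on{-s}}/\bS$ is smooth by Theorem~1.10 of \cite{CBS} and $Y\to[Y/\bS]$ is a $\bS$-torsor, the $\bS$-invariant open subvariety $U\subseteq Y$ lying over $\on{Rep}(\Lambda^q,\alpha)^{\theta\on{-s}}/\bS$ is smooth.

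Next I would apply functorial---hence $\bS$-equivariant---resolution of singularities in characteristic $0$ to $Y$, obtaining a smooth $\bS$-variety $\widetilde{Y}$ with a projective, $\bS$-equivariant, birational morphism $g\colon \widetilde{Y}\to Y$ that is an isomorphism over the smooth locus $Y^{\on{sm}}\supseteq U$. I would then take $\overline{\on{Rep}(\Lambda^q,\alpha)^{\theta\on{-s}}/\bS} := [\widetilde{Y}/\bS]$, with $p$ the composite $[\widetilde{Y}/\bS]\to[Y/\bS]=\overline{\mathcal{M}}_{\on{st}}'\xrightarrow{f}\overline{\mathcal{M}}_{\on{st}}$, the first arrow being the morphism of quotient stacks induced by $g$.

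It then remains to run three routine checks. First, $[\widetilde{Y}/\bS]$ is Deligne--Mumford: for $\widetilde{y}\in\widetilde{Y}$ over $y\in Y$, equivariance of $g$ forces $\on{Stab}_{\bS}(\widetilde{y})\subseteq\on{Stab}_{\bS}(y)$, which is finite. Second, $[\widetilde{Y}/\bS]$ is smooth (as $\widetilde{Y}$ is) and projective, and $p$ is projective: the coarse space of $[\widetilde{Y}/\bS]$ maps by a projective morphism---the one induced by the projective $\bS$-equivariant $g$---to the projective coarse space of $\overline{\mathcal{M}}_{\on{st}}'$, hence is projective, and $p$ is a composite of two projective morphisms. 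Third, $p$ is an isomorphism over $\on{Rep}(\Lambda^q,\alpha)^{\theta\on{-s}}/\bS$, since $g$ is an isomorphism over $U$ and $f$ is an isomorphism over $\on{Rep}(\Lambda^q,\alpha)^{\theta\on{-s}}/\bS$. From the third point, $W:=p^{-1}\big(\on{Rep}(\Lambda^q,\alpha)^{\theta\on{-s}}/\bS\big)$ is open in $[\widetilde{Y}/\bS]$ and $p|_W$ is an isomorphism onto $\on{Rep}(\Lambda^q,\alpha)^{\theta\on{-s}}/\bS$; the composite of $(p|_W)^{-1}$ with the open inclusion $W\hookrightarrow[\widetilde{Y}/\bS]$ is the required open immersion $j$, and $p\circ j$ is, by construction, the original open immersion $\on{Rep}(\Lambda^q,\alpha)^{\theta\on{-s}}/\bS\hookrightarrow\overline{\mathcal{M}}_{\on{st}}$.

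The main obstacle is really just bookkeeping: no new input is needed beyond the Kirwan--Edidin construction, functorial resolution of singularities, and the properties of $\overline{\mathcal{M}}_{\on{st}}$ established above, and the single point one should not skip is the stabilizer-inclusion observation, which is what ensures that equivariant resolution preserves the Deligne--Mumford property (and, with it, that the final stack is a legitimate target for the open immersion).
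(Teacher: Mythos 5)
Your proposal is correct and takes essentially the same approach as the paper, which sketches precisely this two-step procedure (Kirwan--Edidin partial desingularization followed by $\bS$-equivariant resolution of singularities) in the paragraph preceding the proposition. Your write-up usefully makes explicit the routine but necessary checks --- the stabilizer-inclusion observation ensuring the Deligne--Mumford property is preserved under equivariant resolution, and the compatibility of coarse moduli spaces --- that the paper leaves to the reader.
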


\section{The Diagonal of the Algebra $\sA$}
\subsection{Bimodule of Derivations}
Recall that we have fixed an ordering $\Omega = \{a_1, \dots, a_g\}$ on the arrows in $Q$.
For $j=1, \dots, g$ we write
\bd
L_{a_j} = G_{a_1}\dots G_{a_{j-1}}, \hspace{1em} R_{a_j} = G_{a_{j+1}}\dots G_{a_g}, \hspace{1em}\text{so}\hspace{1em} D = L_{a_j}G_{a_j}R_{a_j},
\ed
\bd
L_{a_j^*} = G_{a_g^*}\dots G_{a_{j+1}^*}, \hspace{1em} R_{a_j^*} = G_{a_{j-1}^*}\dots G_{a_1^*}, \hspace{1em}\text{so}\hspace{1em} D^* = qL_{a_j^*}G_{a_j^*}R_{a_j^*}.
\ed

Let $B$ denote the sub-$(S[t],S[t])$-bimodule of $k\Qdbl[t]$ spanned by the arrows, so that $k\Qdbl[t]$ is identified with the tensor algebra $T_{S[t]}(B)$.  As in \cite[p.190]{CBS}, the bimodule that is the target of the universal $\sS$-linear bimodule derivation of $k\Qdbl[t]$ satisfies
\bd
\Omega_{S[t]}(k\Qdbl[t]) \cong k\Qdbl[t]\otimes_{S[t]} B\otimes_{S[t]} k\Qdbl[t],
\ed
under which the universal derivation $\delta_{k\Qdbl[t]/S[t]}: k\Qdbl[t] \rightarrow\Omega_{S[t]}(k\Qdbl[t])$ is identified with $a \mapsto 1\otimes a\otimes 1$.  As in \cite[p.~190]{CBS}, for the universal localization $\sL_t$ we also get
$\Omega_{S[t]}(\sL_t) \cong \sL_t \otimes_{k\Qdbl[t]} \Omega_{S[t]}(k\Qdbl[t]) \otimes_{k\Qdbl[t]} \sL_t$ with the obvious identification of the universal derivation $\delta_{\sL_t/S[t]}$.  We write:
\begin{equation}\label{P_1-eq}
P_1 = \sA\otimes_{S[t]} B\otimes_{S[t]} \sA \cong \sA \underset{k\Qdbl[t]}{\otimes} \Omega_{S[t]}(k\Qdbl[t]) \underset{k\Qdbl[t]}{\otimes} \sA.
\end{equation}
The module $P_1$ is evidently projective as a bimodule.  Via the above description, we obtain a collection of bimodule basis elements 
\bd
\eta_a, a\in H, \;\;\;  \text{via} \;\;\;
\eta_a = 1\otimes a \otimes 1 \in \sA\otimes_{S[t]} B\otimes_{S[t]} \sA = P_1.
\ed
  
\subsection{An Exact Sequence}
We write 
\bd
P_0 = \sA \otimes_{S[t]} \sA.
\ed
Write $\eta_i = e_i\otimes 1 = 1\otimes e_i$, $i\in I$, for the obvious bimodule generators of $P_0$. 
Define graded bimodule maps 
\begin{equation}\label{the-complex}
P_0(-2g) \xrightarrow{\alpha} P_1(-1) \xrightarrow{\beta} P_0
\end{equation}
by
$\beta(\eta_a) = a \eta_{s(a)} - \eta_{t(a)} a$ for arrows $a$ of $\Qgtr$, and
\begin{equation}\label{alpha formula}
\alpha(\eta_i) = \sum_{a\in\Omega, s(a) = i} L_a \Delta_a R_a -  \sum_{a\in\Omega, t(a)=i}  qL_{a^*}\Delta_{a^*}R_{a^*},
\end{equation}
 where $\Delta_a = \delta(G_a)$ (where $\delta$ denotes the universal derivation).  It is then immediate that $\alpha(\eta_i) = e_i\cdot\delta(\rho)$; in particular, letting $\theta: P_0\rightarrow (\rho)/(\rho^2)$ denote the map defined by $\theta(p\otimes q) = p\rho q$ and writing $\phi$ for the isomorphism defined by \eqref{P_1-eq}, we have:
 \begin{equation}\label{Omega equation}
  \phi\circ\alpha = \delta\circ\theta.
  \end{equation}
Imitating the proof of Lemma 3.1 of \cite{CBS} gives:
\begin{lemma}
The sequence
\begin{equation}\label{complex P}
P_0(-2g)\xrightarrow{\alpha} P_1(-1) \xrightarrow{\beta} P_0 \xrightarrow{\gamma} \sA\rightarrow 0,
\end{equation}
where $\gamma (p\otimes q) = pq$, is an exact sequence of $\mathbb{Z}$-graded bimodules.  
\end{lemma}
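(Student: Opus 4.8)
The plan is to follow the template of Lemma 3.1 of \cite{CBS}, adapting the argument to the homogenized, graded setting. The statement is that
\[
P_0(-2g)\xrightarrow{\alpha} P_1(-1) \xrightarrow{\beta} P_0 \xrightarrow{\gamma} \sA\rightarrow 0
\]
is exact as a complex of graded $\sA$-bimodules, where $\gamma$ is multiplication, $\beta(\eta_a) = a\eta_{s(a)} - \eta_{t(a)}a$, and $\alpha$ is given by \eqref{alpha formula}. I would prove exactness degreewise at each of the three nontrivial spots, treating the four algebra maps as morphisms of left (say) $\sA$-modules via the left factor, and using that each $P_j$ is a free left $\sA$-module on an explicit finite set of generators (the $\eta_i$ for $P_0$, the $\eta_a$ for $P_1$, each $\eta$ multiplied on the right by a monomial in $S[t]$-basis arrows of the tensor algebra), so that the complex is a complex of free graded left modules.

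First I would treat the standard part: exactness of $P_1(-1)\xrightarrow{\beta} P_0\xrightarrow{\gamma}\sA\to 0$. This is the usual presentation of $\sA$ as a quotient of the tensor algebra $k\Qdbl[t] = T_{S[t]}(B)$: for any tensor algebra $T_{S[t]}(B)$ one has the fundamental exact sequence $T\otimes_{S[t]} B\otimes_{S[t]} T \to T\otimes_{S[t]} T \to T \to 0$ with the middle map $b\mapsto b\eta_{s}-\eta_t b$, and base change along $k\Qdbl[t]\too\sA$ preserves exactness on the right two terms because $P_0, P_1$ are the base changes of the corresponding $k\Qdbl[t]$-bimodules and right exactness of $\otimes$ applies. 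The surjectivity of $\gamma$ and $\im(\beta)=\ker(\gamma)$ then follow exactly as in the classical case (this is essentially \cite[Lemma 3.1]{CBS}); the grading is respected because $\rho$, the $G_a$, and $t$ are homogeneous, and the twists $(-1)$, $(-2g)$ are precisely chosen so $\alpha$, $\beta$ are degree-preserving (note $\deg\rho = 2g$ and $\deg\Delta_a = 1$).

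The real content is exactness at $P_1(-1)$, i.e. $\ker\beta = \im\alpha$. The inclusion $\im\alpha\subseteq\ker\beta$ is the computation $\beta\circ\alpha = 0$: via \eqref{Omega equation} this is $\beta(\delta(\rho)) = \gamma$-image manipulation, more precisely it amounts to the Leibniz-rule identity that the image of $\delta(\rho)$ under $\beta$ is $\rho\otimes 1 - 1\otimes\rho$, which dies in $P_0$ over $\sA$ since $\rho = 0$ there; I would check this by writing $\rho = D - D^*$, applying $\delta$ via the product rule to the factorizations $D = L_{a_j}G_{a_j}R_{a_j}$ and $D^* = qL_{a_j^*}G_{a_j^*}R_{a_j^*}$, and using \eqref{G_a composed a}. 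For the reverse inclusion $\ker\beta\subseteq\im\alpha$, I expect to imitate the homological argument of \cite{CBS}: one uses that $\sL_t$ (equivalently $\Lambda_t$) is a localization of $\sA$ where the $G_a$ become invertible, that over this localization the analogous sequence is the standard bimodule resolution of $\Lambda^q$ coming from the fact that $\Lambda^q$ is a "multiplicative preprojective" hence has the relevant 2-term resolution of $\cite{CBS}$, and then one checks that everything is already defined integrally (before localization), so that a class in $\ker\beta$ that becomes a boundary after localization is already a boundary. Concretely, one shows that the complex \eqref{complex P} tensored with $\sL_t$ over $\sA$ is exact (this is \cite[Lemma 3.1]{CBS} applied to $L_Q$, made graded via the $t$-variable), and that the localization map on the relevant kernels/cokernels is injective because $P_0, P_1$ are free and $t$ together with the $G_a$ act as non-zero-divisors on the relevant finitely generated graded modules — or, more robustly, one runs the explicit diagram chase of \cite{CBS} verbatim, since each step there only uses the Peirce/Leibniz identities that we have recorded in \eqref{G_a composed a} and the remark preceding it, now with $t^2$ in place of $1$.

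The main obstacle, and the step I would spend the most care on, is exactness at $P_1(-1)$: verifying that the explicit formula \eqref{alpha formula} for $\alpha$ exactly cuts out $\ker\beta$, rather than a proper submodule, in the homogenized algebra $\sA$. In \cite{CBS} this rests on delicate cancellation specific to the multiplicative preprojective relation; here the extra variable $t$ and the twist $t^2 + aa^*$ in $G_a$ mean the Leibniz computations have more terms, and one must confirm that the "degree bookkeeping" (the twists by $-1$ and $-2g$) together with the identities \eqref{G_a composed a}, $G_a a = a G_{a^*}$ and $a^* G_a = G_{a^*} a^*$, still produce the same cancellations. I would organize this by first verifying it after inverting the $G_a$ and $t$ (where it reduces cleanly to the known \cite{CBS} result for $\Lambda^q$, graded), and then arguing that exactness descends from $\sL_t\otimes_{\sA}(-)$ to $(-)$ using freeness of the terms and flatness/faithfulness properties of the universal localization on the relevant graded pieces.
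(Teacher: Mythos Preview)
Your proof sketch diverges from the paper's at the key step, and the divergence introduces a genuine gap.

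The paper (following CBS) does \emph{not} argue exactness at $P_1(-1)$ by localizing to $\sL_t$ and descending. Instead it invokes the general second fundamental sequence for noncommutative K\"ahler differentials of a quotient (Schofield, Theorem~10.3): for $\sA = k\Qdbl[t]/(\rho)$ one has an exact sequence
\[
(\rho)/(\rho)^2 \xrightarrow{\ \delta\ } \sA\otimes_{k\Qdbl[t]}\Omega_{S[t]}\big(k\Qdbl[t]\big)\otimes_{k\Qdbl[t]}\sA \longrightarrow \Omega_{S[t]}(\sA)\longrightarrow 0,
\]
which is then spliced with the defining sequence $0\to\Omega_{S[t]}(\sA)\to \sA\otimes_{S[t]}\sA\to\sA\to 0$. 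The identification \eqref{P_1-eq} gives the isomorphism $\phi:P_1(-1)\xrightarrow{\sim}\sA\otimes\Omega_{S[t]}(k\Qdbl[t])\otimes\sA$, and the identity $\phi\circ\alpha = \delta\circ\theta$ (which you cite but do not exploit) together with surjectivity of $\theta:P_0(-2g)\twoheadrightarrow(\rho)/(\rho)^2$ immediately yield $\operatorname{im}\alpha=\ker\beta$ by a two-line diagram chase. No localization, no descent, no explicit Leibniz cancellation is needed.

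Your proposed route---check exactness after passing to $\sL_t$, then descend---has a real problem: universal localization is \emph{not} flat in general, and you give no argument that $\sA\to\sL_t$ is even injective (it is not automatic from the $G_a$ being non-zero-divisors, since the $G_a$ do not form an Ore set). Consequently there is no reason the map from the homology of \eqref{complex P} at $P_1(-1)$ to the homology of its $\sL_t$-base-change should be injective, and the descent step fails without further input. Your fallback, ``run the explicit diagram chase of CBS verbatim,'' is in fact exactly the Schofield/K\"ahler-differential argument above; CBS does not prove its Lemma~3.1 by Peirce/Leibniz identities but by this same homological device. So the robust route you allude to is available and correct---it is just not the computational one you describe.
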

\begin{proof}
As in \cite[Theorem~10.3]{Schofield}, one gets an exact sequence
\bd
(\rho)/(\rho^2) \xrightarrow{\delta} \Omega_{S[t]}\big(k\Qdbl[t]\big) \rightarrow \Omega_{S[t]} \sA \rightarrow 0.
\ed
As in \cite{CBS}, splicing this sequence and the defining sequence for $\Omega_{S[t]}\big(\sA\big)$  and applying \eqref{Omega equation} gives a commutative diagram
\bd
\xymatrix{
P_0(-2g)\ar[d]^{\theta} \ar[r]^{\alpha} & P_1(-1) \ar[d]_{\cong}^{\phi} \ar[r]^{\beta} & P_0 \ar[d]_{\cong}^{\psi} \ar[r] & \sA \ar[d]^{=} \ar[r] & 0\\
(\rho)/(\rho^2) \ar[r]^{\hspace{-4.5em}\delta} &
\sA \underset{k\Qdbl[t]}{\otimes} \Omega_{S[t]}(k\Qdbl[t]) \underset{k\Qdbl[t]}{\otimes} \sA \ar[r]^{\hspace{4.5em}\xi} & 
\sA \otimes_{S[t]} \sA \ar[r] & \sA \ar[r] & 0.
}
\ed
The vertical arrows $\phi, \psi$ are isomorphisms and $\theta$ is surjective, yielding the assertion.
\end{proof}

\subsection{Dual of the Map $P_0(-2g)\xrightarrow{\alpha}  P_1(-1)$}
Recall that the {\em enveloping algebra} of $\sA$ over $k[t]$ is
\bd
\sA^e := \sA\otimes_{k[t]}\sA^{\on{op}}.
\ed
We consider $\sA^e$ as a left $\sA^e$-module
 where $a\otimes a'\in\sA^e$ acts by
\bd
a\otimes a'\cdot (x\otimes x') = ax\otimes x'a'.
\ed
We remark that $\sA^e$ naturally also has a {\em right} $\sA^e$-module structure commuting with the left $\sA^e$-action, where $a\otimes a'\in \sA^e$ acts on the right by
\bd
(x\otimes x')\cdot a\otimes a' = xa\otimes a'x'.  
\ed
Given a finitely generated left $\sA^e$-module, we form $P^\vee = \Hom_{\sA^e}(P, \sA^e)$, the dual over the enveloping algebra; by the above discussion, this module has a right $\sA^e$-module structure, which we can identify with a left $\sA^e$-module structure via the isomorphism
\bd
(\sA^e)^{\on{op}} \rightarrow \sA^e, \hspace{2em} a\otimes a' \mapsto a' \otimes a.
\ed

We now want to calculate the dual $\alpha^\vee$ of the map $\alpha$ of \eqref{the-complex} using the formula \eqref{alpha formula}.  Note that 
\bd
\Delta(G_a) = a\delta(a^*) + \delta(a)a^* = a \eta_{a^*} + \eta_a a^*.
\ed
We thus find from Formula \eqref{alpha formula} that the $\eta_a$-component of $\alpha$ is given by
\bd
\alpha(\eta_i)_{\eta_a} = 
\begin{cases} 
L_a \eta_a a^* R_a - qL_{a^*}a^*\eta_a R_{a^*} & \text{if $a\in\Omega$, $i=s(a)$},\\
L_{a^*} a^* \eta_a R_{a^*} - qL_a \eta_a a^* R_a & \text{if $a\in\overline{\Omega}$, $i=t(a)$}
\end{cases}
\ed
and zero otherwise.
Let $\{\eta_a^\vee\}$ denote the basis of $P_1^\vee$ dual to the basis $\{\eta_a\}$ of $P_1$; we note that 
\begin{equation}
\eta^\vee_a \in e_{t(a)}P_1^\vee e_{s(a)}.
\end{equation}
  It follows from the above formulas:
\begin{equation}\label{alpha check formulas}
\alpha^\vee(\eta_a^\vee) =
\begin{cases}
a^*R_a \eta_{s(a)}^\vee L_a - qR_{a^*} \eta_{t(a)}^\vee L_{a^*}a^* & \text{if $a\in\Omega$,}\\
R_a^*\eta_{t(a)}^\vee L_{a^*}a^* - qa^*R_a \eta_{s(a)}^\vee L_a & \text{if $a\in\overline{\Omega}$.}
\end{cases}
\end{equation}
\begin{lemma}\label{alpha vee formulas}
For all $a\in \Omega$, we have
\begin{align}
\alpha^\vee\big(\eta^\vee_a a- a^*\eta_{a^*}^\vee\big) & =  G_{a^*}\big(qR_{a^*}\eta^\vee_{t(a)} L_{a^*}\big) - \big(qR_{a^*}\eta^\vee_{t(a)}L_{a^*}\big)G_{a^*},\\
\alpha^\vee\big(a\eta_a^\vee - \eta_{a^*}^\vee a^*\big) & =  G_a\big(R_a\eta_{s(a)}^\vee L_a\big) - \big(R_a\eta_{s(a)}^\vee L_a\big) G_a.
\end{align}
\end{lemma}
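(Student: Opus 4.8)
The plan is to compute $\alpha^\vee$ on the two indicated combinations directly from the formulas \eqref{alpha check formulas}, exploiting the commutation relations \eqref{G_a composed a} to produce the desired commutators with $G_a$ and $G_{a^*}$. First I would substitute the two cases of \eqref{alpha check formulas} into $\alpha^\vee(a\eta_a^\vee - \eta_{a^*}^\vee a^*)$ for a fixed $a\in\Omega$. Since $a\in\Omega$ we have $a^*\in\overline{\Omega}$, so $\alpha^\vee(\eta_a^\vee) = a^*R_a\eta_{s(a)}^\vee L_a - qR_{a^*}\eta_{t(a)}^\vee L_{a^*}a^*$ and $\alpha^\vee(\eta_{a^*}^\vee) = R_{a^*}\eta_{t(a)}^\vee L_{a^*}a^* - qa^*R_a\eta_{s(a)}^\vee L_a$ (noting $s(a^*) = t(a)$, $t(a^*)=s(a)$, $R_{a^*}, L_{a^*}$ as defined, and being careful that the ``starred'' versions of $L,R$ are reindexed correctly). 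Then $a\cdot\alpha^\vee(\eta_a^\vee)$ and $\alpha^\vee(\eta_{a^*}^\vee)\cdot a^*$ are both expressed in terms of the two monomials $a a^* R_a\eta_{s(a)}^\vee L_a$-type and $R_{a^*}\eta_{t(a)}^\vee L_{a^*}a^* a^*$-type terms; subtracting, the cross terms with coefficient $q$ combine, and using $aa^* = G_a - t^2$ (equivalently $G_a = t^2 + aa^*$) converts the monomial $aa^*R_a\eta_{s(a)}^\vee L_a$ into $G_a R_a \eta_{s(a)}^\vee L_a - t^2 R_a\eta_{s(a)}^\vee L_a$, and likewise on the other side, so that the $t^2$-terms cancel against each other (as $t$ is central) and what remains is exactly $G_a(R_a\eta_{s(a)}^\vee L_a) - (R_a\eta_{s(a)}^\vee L_a)G_a$. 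The first identity is handled by the symmetric computation on $\eta_a^\vee a - a^*\eta_{a^*}^\vee$, again using $a^*a = G_{a^*} - t^2$ together with the identities $G_a a = aG_{a^*}$, $a^*G_a = G_{a^*}a^*$ of \eqref{G_a composed a} to move the relevant $G$'s past the arrows so that the answer is packaged as a commutator with $G_{a^*}$ rather than $G_a$.

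The one genuinely fiddly point—and the step I expect to be the main obstacle—is bookkeeping the noncommutative monomials $L_a, R_a, L_{a^*}, R_{a^*}$ correctly: one must check that $a\cdot L_a = L_{a^*}\cdot a$ and $R_a\cdot a^* \;(\text{suitably})\; = a^*\cdot R_{a^*}$-type relations hold after applying \eqref{G_a composed a} termwise across each factor $G_{a_i}$ in the products defining $L,R$, so that $a$ (resp.\ $a^*$) can be slid through from one side to the other and the two expressions genuinely line up term-by-term. Concretely, since $D = L_{a_j}G_{a_j}R_{a_j}$ for each $j$, and $D^* = qL_{a_j^*}G_{a_j^*}R_{a_j^*}$, repeated use of \eqref{G_a composed a} gives $a R_a = R_{a^*} a$ (pushing $a$ left-to-right through $G_{a_{j+1}}\cdots G_{a_g}$ turns each $G_{a_i}$ into $G_{a_i^*}$, reversing the order to $G_{a_g^*}\cdots G_{a_{j+1}^*} = L_{a^*}$ only after one is careful about which of $L,R$ is which) and similarly $L_a a^* = a^* R_{a^*}$ etc.; I would state these auxiliary slide identities as a preliminary observation before doing the substitution, so the final cancellation is transparent. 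Given those, both displayed equalities follow by a short, purely formal manipulation, and no deeper input (exactness, projectivity, geometry) is needed here—this lemma is just the explicit dual of the ``$\alpha$'' differential, set up for use in building the resolution/complex on $\mathcal{M}_\theta^q(\alpha)\times\overline{\mathcal{M}}$ later.
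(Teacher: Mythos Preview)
Your overall strategy---direct substitution of the formulas \eqref{alpha check formulas} and reduction via $aa^* = G_a - t^2$, $a^*a = G_{a^*} - t^2$---is exactly right, and this lemma is indeed a short formal computation. But you have made a substitution error that is leading you to invent machinery that does not exist and is not needed.

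The issue is your expression for $\alpha^\vee(\eta_{a^*}^\vee)$. You wrote down the second line of \eqref{alpha check formulas} literally, but that line is stated for an arrow called ``$a$'' lying in $\overline{\Omega}$. To apply it to $\eta_{a^*}^\vee$ with $a\in\Omega$, you must substitute $a\mapsto a^*$ throughout, which swaps $s\leftrightarrow t$, $a\leftrightarrow a^*$, and $L_a,R_a \leftrightarrow L_{a^*},R_{a^*}$. The correct formula is
\[
\alpha^\vee(\eta_{a^*}^\vee) \;=\; R_a\,\eta_{s(a)}^\vee\, L_a\, a \;-\; q\,a\, R_{a^*}\,\eta_{t(a)}^\vee\, L_{a^*}.
\]
With this in hand, for the second identity you get
\[
a\,\alpha^\vee(\eta_a^\vee) - \alpha^\vee(\eta_{a^*}^\vee)\,a^*
= aa^*\,R_a\eta_{s(a)}^\vee L_a - R_a\eta_{s(a)}^\vee L_a\, aa^*,
\]
since the two $q$-terms are \emph{identical} and cancel on the nose. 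Replacing $aa^*$ by $G_a - t^2$ and using centrality of $t$ finishes immediately. The first identity is the symmetric computation with $a^*a = G_{a^*}-t^2$.

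Your proposed ``slide identities'' such as $a\,L_a = L_{a^*}\,a$ are not consequences of \eqref{G_a composed a}: that relation moves $a$ past $G_a$ only, not past $G_b$ for $b\neq a$, and indeed $G_{a_i}a_j$ has no useful simplification for $i\neq j$. Fortunately, once the substitution is done correctly, no such identities are required; neither, in fact, is \eqref{G_a composed a} itself.
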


\begin{lemma}\label{easy commuting case}
If $a\in H$, $s(a)\neq i$, then $G_a D\eta^\vee_i = D\eta_i^\vee G_a$ in $P_0^\vee$.
\end{lemma}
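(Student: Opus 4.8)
The plan is to reduce the identity $G_a D\eta_i^\vee = D\eta_i^\vee G_a$ to the triviality that the central element $t^2\in\sA$ acts identically on the two sides of any module over the enveloping algebra $\sA^e = \sA\otimes_{k[t]}\sA^{\on{op}}$, combined with the observation that $G_a$ coincides with $t^2$ ``away from the vertex $s(a)$''.

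The first step is idempotent bookkeeping. Since $\eta_i = e_i\otimes 1 = 1\otimes e_i$, we have $\eta_i = e_i\eta_i e_i$ in $P_0$, and dually $\eta_i^\vee = e_i\eta_i^\vee e_i$ in $P_0^\vee$ (the analogue of $\eta_a^\vee\in e_{t(a)}P_1^\vee e_{s(a)}$ for $P_1$). Next, each $G_b = t^2 + bb^*$ has diagonal Peirce decomposition, hence commutes with every idempotent $e_j$; since the centralizer of $S$ is a subalgebra, the product $D = G_{a_1}\cdots G_{a_g}$ also commutes with every $e_j$. Therefore $D\eta_i^\vee = e_iD\eta_i^\vee = D\eta_i^\vee e_i$, i.e.\ $D\eta_i^\vee\in e_iP_0^\vee e_i$.

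The second step uses the identity recorded in the Remark following the definition of the $G_b$: when $i\neq s(a)$ one has $e_iG_a = e_it^2 = t^2e_i = G_ae_i$. Combining this with the previous paragraph,
\bd
G_aD\eta_i^\vee = (G_ae_i)(D\eta_i^\vee) = (t^2e_i)(D\eta_i^\vee) = t^2\,D\eta_i^\vee,
\ed
\bd
D\eta_i^\vee G_a = (D\eta_i^\vee)(e_iG_a) = (D\eta_i^\vee)(e_it^2) = (D\eta_i^\vee)\,t^2.
\ed
The final step is to observe that, because $P_0^\vee$ is a module over $\sA^e = \sA\otimes_{k[t]}\sA^{\on{op}}$, in which $t\otimes 1 = 1\otimes t$, left and right multiplication by $t$ — hence by $t^2$ — agree on all of $P_0^\vee$; so $t^2\,D\eta_i^\vee = (D\eta_i^\vee)\,t^2$, and the two displayed expressions coincide, which is the assertion.

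I do not anticipate any serious obstacle: this is a bookkeeping lemma, and no input beyond the excerpt is needed. The only points demanding a little care are (a) keeping track of the idempotent $e_i$ sitting on each side of $\eta_i^\vee$, (b) the fact that $D$, being a product of elements with diagonal Peirce decomposition, commutes with all the $e_j$, and (c) remembering that the enveloping algebra $\sA^e$ is formed over $k[t]$, so that $t$ (and thus $t^2$) really is central on the bimodule $P_0^\vee$. Everything else is formal.
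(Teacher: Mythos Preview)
Your proof is correct and follows essentially the same approach as the paper's own argument. The paper phrases it via the decomposition $G_a = G_ae_{s(a)} + (1-e_{s(a)})t^2$ together with $e_{s(a)}\eta_i^\vee = 0 = \eta_i^\vee e_{s(a)}$, while you use the complementary idempotent $e_i$ directly; but the content is identical, resting on the diagonal Peirce type of $D$, the idempotent placement of $\eta_i^\vee$, and the centrality of $t$ in $\sA^e$.
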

\begin{proof}
The element $D$ is a product of elements of diagonal Peirce type, hence itself is of diagonal Peirce type.  Thus, using $e_{s(a)}\eta_i^\vee = 0 = \eta_i^\vee e_{s(a)}$, we get
\begin{multline*}
G_a D\eta^\vee_i = \big(G_ae_{s(a)} + (1-e_{s(a)})t^2\big)D\eta^\vee_i = (1-e_{s(a)}t^2)D\eta^\vee_i \\
= D\eta^\vee_i(1-e_{s(a)}t^2) = D\eta^\vee_i\big(e_{s(a)}G_a + (1-e_{s(a)})t^2\big) = D\eta^\vee_i G_a.
\end{multline*}
This completes the proof.
\end{proof}
Suppose now that $\mathcal{M}$ is a graded right $\Lambda_t$-module; then $M = \mathcal{M}_{\geq 0}$ is a graded right $\sA$-submodule of $\mathcal{M}$.  For example, we could take $\mathcal{M} = \Lambda_t$ itself, as in \eqref{universal localization}.  
We consider the map 
\bd
M\otimes_{\sA} P_1^\vee(1) \xrightarrow{1_{M}\otimes\alpha^\vee} M\otimes_{\sA} P_0^\vee(2g).
\ed
\begin{remark}\label{defined operators}
We note that, under the above hypothesis on $M$, for any product $Q$ of elements $G_a$, $a\in H$, of degree $\deg(Q)$, the elements $Qt^{-\deg(Q)}$ and $t^{\deg(Q)}Q\inv$ of $\Lambda_t$ give well defined operators of right multiplication on $M$ that satisfy all relations in $\Lambda_t$. 
\end{remark}
\begin{prop}\label{big formulas}
Suppose that $M = \mathcal{M}_{\geq 0}$ for a graded right $\Lambda_t$-module $\mathcal{M}$.  
Then for all $m\in M$ and all $i\in I$ and $1\leq j \leq g$,
\begin{enumerate}
\item
 the elements
$m\big(G_{a_j} D \eta^\vee_i - D\eta_i^\vee G_{a_j}\big)$, $m\big(G_{a_j^*} D \eta^\vee_i - D\eta_i^\vee G_{a_j^*}\big)$, and 
\item
 the elements
$m\big(a_j^* Dt^{-2}\eta_{s(a_j)}^\vee - Dt^{-2}\eta_{t(a_j)}^\vee  a_j^*\big)$,  
$m\big(a_j Dt^{-2}\eta_{t(a_j)}^\vee - Dt^{-2}\eta_{s(a_j)}^\vee  a_j\big)$
\end{enumerate}
lie in $\on{Im}(1_{\Lambda_t}\otimes\alpha^\vee) \subseteq M\otimes_{\sA} P_0^\vee(2g)$.
\end{prop}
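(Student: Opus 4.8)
The plan is to prove that all four families of elements lie in the image of the map $1_M\otimes\alpha^\vee\colon M\otimes_{\sA}P_1^\vee(1)\to M\otimes_{\sA}P_0^\vee(2g)$ by exhibiting, for each $i\in I$ and each $1\le j\le g$, an explicit element $\xi\in P_1^\vee$ and an element $m'\in M$ with the given element equal to $(1_M\otimes\alpha^\vee)(m'\otimes\xi)$. The only facts about $\alpha^\vee$ that will be used are the identities of Lemma~\ref{alpha vee formulas} (and, behind them, the formulas \eqref{alpha check formulas} and \eqref{alpha formula}); everything else is bookkeeping with the bimodule structures on $P_0^\vee$, $P_1^\vee$ and with the operators of Remark~\ref{defined operators}.

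First I would dispose of the cases in which the relevant Peirce (vertex) components do not match. For part~(1), $m\big(G_{a_j}D\eta^\vee_i - D\eta^\vee_iG_{a_j}\big) = m\,[G_{a_j},D\eta^\vee_i]$, and if $i\ne s(a_j)$ then Lemma~\ref{easy commuting case} gives $G_{a_j}D\eta^\vee_i = D\eta^\vee_iG_{a_j}$, so the element vanishes; applying Lemma~\ref{easy commuting case} to the arrow $a_j^*$ (whose source is $t(a_j)$) similarly kills $m\big(G_{a_j^*}D\eta^\vee_i - D\eta^\vee_iG_{a_j^*}\big)$ unless $i=t(a_j)$. A parallel Peirce computation---using the diagonal Peirce decomposition of $D$, of $t$, and of each $G_a$, the identities \eqref{G_a composed a}, and the fact that $\eta^\vee_k$ lies in $e_kP_0^\vee e_k$ while $\eta^\vee_a$ lies in $e_{t(a)}P_1^\vee e_{s(a)}$---reduces the elements of part~(2) to the case in which the subscripts of the $\eta^\vee$'s are exactly $s(a_j)$ and $t(a_j)$ as displayed. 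So it remains to handle the ``diagonal'' cases $i=s(a_j)$ and $i=t(a_j)$.

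In those cases I would use the factorizations $D=L_{a_j}G_{a_j}R_{a_j}$ and $D^*=qL_{a_j^*}G_{a_j^*}R_{a_j^*}$ together with Remark~\ref{defined operators}: since $M=\mathcal M_{\ge0}$ for a graded right $\Lambda_t$-module, right multiplication on $M$ by any $Qt^{-\deg Q}$ or $t^{\deg Q}Q^{-1}$, with $Q$ a product of the $G_a$, is a well-defined operator. The point is that, inside $M\otimes_{\sA}P_0^\vee$, the ``outer'' factors in $D\eta^\vee_i$ (the pieces $L_{a_j}$ and $R_{a_j}$, the factors $G_{a_k}$ with $k\ne j$, and the powers of $t$ produced by commuting a $G_{a_k}$ with $s(a_k)\ne i$ past $\eta^\vee_i$ via $G_{a_k}\eta^\vee_i = t^2\eta^\vee_i = \eta^\vee_iG_{a_k}$) can be transported across the tensor onto the $M$-factor and there absorbed or cancelled using those operators. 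This identifies $m\otimes D\eta^\vee_i$, $m\otimes Dt^{-2}\eta^\vee_i$, $m\otimes a_j^*Dt^{-2}\eta^\vee_{s(a_j)}$, and the like, up to such a legitimate operator applied to $m$, with the expressions $R_{a_j}\eta^\vee_{s(a_j)}L_{a_j}$ and $qR_{a_j^*}\eta^\vee_{t(a_j)}L_{a_j^*}$ occurring in Lemma~\ref{alpha vee formulas}. Since that lemma identifies the commutators of $G_{a_j}$, of $G_{a_j^*}$, of $a_j$, and of $a_j^*$ with those expressions as $\alpha^\vee$ of explicit elements of $P_1^\vee$, feeding the identity back through the transport step exhibits each proposition element in $\on{Im}(1_M\otimes\alpha^\vee)$. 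For part~(2) one additionally invokes the defining relation $\rho=0$, i.e.\ $D=D^*$ in $\sA$, to pass between the ``$D$'' and ``$D^*$'' forms so that Lemma~\ref{alpha vee formulas} applies on the correct side and the $a_j$, $a_j^*$ land where they should.

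I expect the main obstacle to be entirely one of bookkeeping. One must track the left- versus right-$\sA$-module structures on $P_0^\vee$ and the vertex components as factors migrate across $\otimes_{\sA}$; one must carry out the telescoping over the $g$ factors of $D$ when arrows other than $a_j$ share a vertex with $a_j$ (so that $\eta^\vee_i$ genuinely meets nontrivial $G$'s, not merely powers of $t$); and---most delicate---one must check at every stage that the elements of $\Lambda_t$ used as right-multiplication operators on $M$ really are among those licensed by Remark~\ref{defined operators} (products of $G_a$'s times integer powers of $t$, with no negative-degree obstruction), so that all the manipulations take place honestly inside $M\otimes_{\sA}P_0^\vee$ rather than only after extending scalars to $\mathcal M\otimes_{\sA}P_0^\vee$.
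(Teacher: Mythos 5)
Your proposed transport mechanism has a gap precisely at the step you label ``telescoping,'' and that gap is the heart of the proposition. You correctly identify what has to happen: since Lemma~\ref{alpha vee formulas} produces commutators of $G_{a_j}$ with $R_{a_j}\eta^\vee_{s(a_j)}L_{a_j}$ (an expression with $L_{a_j}$ to the {\em right} of $\eta^\vee$), while the proposition concerns commutators with $D\eta^\vee_{s(a_j)} = L_{a_j}G_{a_j}R_{a_j}\eta^\vee_{s(a_j)}$ (everything to the left), one must move $L_{a_j}$ across $\eta^\vee_{s(a_j)}$. But the only commuting rule you supply is $G_{a_k}\eta^\vee_i = t^2\eta^\vee_i = \eta^\vee_iG_{a_k}$, valid only when $s(a_k)\neq i$. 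When some $a_k$ with $k<j$ satisfies $s(a_k)=s(a_j)$, the factor $G_{a_k}$ inside $L_{a_j}$ does {\em not} commute with $\eta^\vee_{s(a_j)}$ in $P_0^\vee$, and you cannot transport it onto the $M$-side in a way that changes which side of $\eta^\vee$ it sits on. Your proposal acknowledges the phenomenon in the parenthetical about ``$\eta^\vee_i$ genuinely meeting nontrivial $G$'s'' but treats it as bookkeeping, which it is not: it is the mathematical content.

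The paper's resolution is a strong induction on $j$. For $j=1$, $L_{a_1}=1$, so Lemma~\ref{alpha vee formulas} already has the right shape. For general $j$, after multiplying the identity of Lemma~\ref{alpha vee formulas} by $G_{a_j}$ and rewriting $G_{a_j}R_{a_j}$ as $t^{\deg L_{a_j}}L_{a_j}^{-1}Dt^{-\deg L_{a_j}}$ (a legitimate operator by Remark~\ref{defined operators}), one must convert $L_{a_j}^{-1}D\eta^\vee_{s(a_j)}L_{a_j}$ into $D\eta^\vee_{s(a_j)}$ {\em modulo the image}. This is exactly the assertion, for the factors $G_{a_k}$ with $k<j$, that $m(G_{a_k}D\eta^\vee_i - D\eta^\vee_i G_{a_k})\in\on{Im}(1_M\otimes\alpha^\vee)$---the inductive hypothesis. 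Without this, there is no mechanism for passing a $G_{a_k}$ with $s(a_k)=i$ to the other side of $\eta^\vee_i$. Part (2) then genuinely invokes part (1) (not only $\rho=0$, though that relation enters too) for the same reason: the $R_{a_j}$'s and $L_{a_j^*}$'s must again be permuted past $\eta^\vee$ modulo the image. So while your use of Lemmas~\ref{alpha vee formulas} and~\ref{easy commuting case} and Remark~\ref{defined operators} is on target, the missing inductive structure is what makes the argument go through, and it cannot be replaced by the tensor-transport you describe.
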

\begin{proof} 
(1) We first prove that $m\big(G_{a_j} D \eta^\vee_i - D\eta_i^\vee G_{a_j}\big) \in \on{Im}(1_M \otimes\alpha^\vee)$ by (strong) induction on $j$.  

\vspace{.5em}

\noindent 
{\em Base Case.} $j=1$.  By Lemma \ref{easy commuting case}, the assertion is true for $i\neq s(a_1)$.  From Lemma \ref{alpha vee formulas}, we have
\bd
mG_{a_1}\alpha^\vee\big(a_1\eta_{a_1}^\vee - \eta_{a_1^*}^\vee a_1^*\big) =
mG_{a_1}G_{a_1}\big(R_{a_1}\eta_{s(a_1)}^\vee L_{a_1}\big) - mG_{a_1}\big(R_{a_1}\eta_{s(a_1)}^\vee L_{a_1}\big) G_{a_1} 
= mG_{a_1}D\eta_{s(a_1)} - mD\eta_{s(a_1)}G_{a_1}.
\ed
This completes the base case.

\vspace{.5em}

\noindent 
{\em Induction Step.}  Assume $m\big(G_{a_k}D\eta^\vee_i - D\eta^\vee_i G_{a_k}\big)\in\on{Im}(1_{M}\otimes\alpha^\vee)$ for all $i\in I$ and $k<j$.  Again, by Lemma \ref{easy commuting case}, we have $mG_{a_j}D\eta^\vee_i - mD\eta^\vee_i G_{a_j}\in\on{Im}(1_M\otimes\alpha^\vee)$ for $i\neq s(a_j)$.  Applying Lemma \ref{alpha vee formulas} gives
\begin{multline*}
mG_{a_j}\alpha^\vee\big(a_j\eta_{a_j}^\vee - \eta_{a_j^*}^\vee a_j^*\big) =
mG_{a_j}G_{a_j}\big(R_{a_j}\eta_{s(a_j)}^\vee L_{a_j}\big) - mG_{a_j}\big(R_{a_j}\eta_{s(a_j)}^\vee L_{a_j}\big) G_{a_j} \\
= mG_{a_j}\big(t^{\deg(L_{a_j})}L_{a_j}\inv Dt^{-\deg(L_{a_j})}\eta_{s(a_j)}^\vee L_{a_j}\big) - \big(t^{\deg(L_{a_j})}L_{a_j}\inv Dt^{-\deg(L_{a_j})}\eta_{s(a_j)}^\vee L_{a_j}\big) G_{a_j}\\
= m\big(G_{a_j} D\eta_{s(a_j)}^\vee - D\eta_{s(a_j)}^\vee G_{a_j}\big),
\end{multline*}
where the last equality applies the inductive hypothesis.  This completes the induction step, thus proving the assertion for the elements $G_{a_j} D \eta^\vee_i - D\eta_i^\vee G_{a_j}$.

The proof for $G_{a_j^*} D \eta^\vee_i - D\eta_i^\vee G_{a_j^*}$ follows the analogous descending induction on $j$.

(2) Taking note of Remark \ref{defined operators}, from \eqref{alpha check formulas} we have 
$\alpha^\vee(mG_{a_j}t^{-2}\eta_{a_j}) = mG_{a_j}t^{-2}a_j^*R_{a_j} \eta_{s(a_j)}^\vee L_{a_j} - mG_{a_j}t^{-2}qR_{a_j^*} \eta_{t(a_j)}^\vee L_{a_j^*}a_j^*$.  Applying part (1) of the proposition to the right-hand side of this formula gives
\begin{align*}
\alpha^\vee(mG_{a_j}t^{-2}\eta_{a_j})  & = mG_{a_j}t^{-2}a_j^*R_{a_j}L_{a_j} \eta_{s(a_j)}^\vee - mqG_{a_j}t^{-2}R_{a_j^*} L_{a_j^*}\eta_{t(a_j)}^\vee a_j^*  + \on{Im}(1_{\Lambda_t}\otimes\alpha^\vee)\\
& = mG_{a_j}a_j^* G_{a_j}\inv Dt^{-2}\eta_{s(a_j)}^\vee - mG_{a_j}G_{a_j^*}\inv  Dt^{-2}\eta_{t(a_j)}^\vee  a_j^* + \on{Im}(1_{\Lambda_t}\otimes\alpha^\vee)
\\
& = mG_{a_j}G_{a_j^*}\inv\big(a_j^* Dt^{-2}\eta_{s(a_j)}^\vee - Dt^{-2}\eta_{t(a_j)}^\vee  a_j^* \big) + \on{Im}(1_{\Lambda_t}\otimes\alpha^\vee)
\end{align*}
where the last equality uses \eqref{G_a composed a}; in particular this gives the first assertion of Part (2) of the proposition.  The second assertion follows similarly.
\end{proof}

\section{Analysis of the Ext-Complex}

\subsection{The Complex \eqref{complex P} and the $\Hom$-Functor}
Let $M, N$ be graded left $\sA$-modules such that $M$ is finitely generated and projective as a $k[t]$-module.  To the exact sequence
\bd
P_0(-2g)\otimes_{\sA} M \xrightarrow{\alpha\otimes 1} P_1(-1)\otimes_{\sA} M \xrightarrow{\beta\otimes 1} P_0\otimes_{\sA} M\xrightarrow{\gamma\otimes 1} M\rightarrow 0
\ed
 we apply the functor $\Hom_{\sA}(-, N)$ to obtain an exact sequence
 \begin{equation}\label{Hom sequence beginning}
 0\rightarrow \Hom_{\sA}(M, N)\rightarrow \Hom_{\sA}(P_0\otimes_{\sA} M, N)
 \xrightarrow{(\beta\otimes 1)^*} \Hom_{\sA}(P_1(-1)\otimes_{\sA} M, N).
 \end{equation}
We continue the sequence \eqref{Hom sequence beginning} using
\begin{equation}\label{used alpha dual}
\Hom_{\sA}(P_1(-1)\otimes_{\sA} M, N)\xrightarrow{(\alpha\otimes 1)^*} \Hom_{\sA}(P_0(-2g)\otimes_{\sA} M, N).
\end{equation}
Thus, we would like to compute the cokernel of the map \eqref{used alpha dual}.  
\begin{prop}\label{Hom equals tensor}
Let $M, N$ be graded left $\sA$-modules such that $M$ is finitely generated and projective as a $k[t]$-module, and write $M^* = \Hom_{k[t]}(M,k[t])$.
 Consider the contravariant functors of finitely generated projective $\sA^e$-modules $P$, 
\bd
P\mapsto \big(N\otimes_{k[t]} M^*\big) \otimes_{\sA^e} P^\vee \hspace{2em} \text{and} \hspace{2em}
P\mapsto \Hom_{\sA}(P\otimes_{\sA} M, N).
\ed
The natural transformation 
$\big(N\otimes_{k[t]} M^*\big) \otimes_{\sA^e} P^\vee \xrightarrow{\Psi} \Hom_{\sA}(P\otimes_{\sA} M, N)$
of these functors of projective $\sA^e$-modules $P$ is a natural isomorphism.
\end{prop}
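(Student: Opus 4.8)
The plan is to verify that the natural transformation $\Psi$ is an isomorphism by the standard ``check on free modules and pass to summands'' argument. First I would make $\Psi$ explicit: for a finitely generated projective $\sA^e$-module $P$, an element $f \in P^\vee = \Hom_{\sA^e}(P, \sA^e)$ and an element $n \otimes \mu \in N \otimes_{k[t]} M^*$, one sends $n \otimes \mu \otimes f$ to the map $P \otimes_{\sA} M \to N$ given by $p \otimes m \mapsto (\text{left factor of } f(p))\cdot n \cdot \mu\big((\text{right factor of } f(p)) \cdot m\big)$; here one uses that $f(p) \in \sA^e = \sA \otimes_{k[t]} \sA^{\on{op}}$ has a left and right tensor factor, that $N$ is a left $\sA$-module, and that $M^* = \Hom_{k[t]}(M, k[t])$ carries a right $\sA$-action since $M$ is a left $\sA$-module (using that $M$ is $k[t]$-projective so duality behaves well). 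One checks this is well-defined (balanced over $\sA^e$ and over $k[t]$) and natural in $P$; this is routine but must be done carefully to get the module structures right.

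Next, both functors $P \mapsto (N \otimes_{k[t]} M^*) \otimes_{\sA^e} P^\vee$ and $P \mapsto \Hom_{\sA}(P \otimes_{\sA} M, N)$ are additive contravariant functors on the category of finitely generated projective left $\sA^e$-modules, and $\Psi$ is an additive natural transformation. Hence it suffices to check that $\Psi$ is an isomorphism for $P = \sA^e$ itself (a free module of rank one), since every finitely generated projective $\sA^e$-module is a direct summand of a finite free one, and both functors together with the natural transformation commute with finite direct sums and split idempotents. For $P = \sA^e$ we have $P^\vee \cong \sA^e$ canonically (as a right $\sA^e$-module, identified with a left one via the flip $a \otimes a' \mapsto a' \otimes a$ described in the excerpt), so the left-hand side becomes $(N \otimes_{k[t]} M^*) \otimes_{\sA^e} \sA^e \cong N \otimes_{k[t]} M^*$. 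On the right-hand side, $\sA^e \otimes_{\sA} M \cong \sA \otimes_{k[t]} M$ (the left tensor factor survives, the right $\sA^{\on{op}}$-factor absorbs into $M$), so $\Hom_{\sA}(\sA^e \otimes_{\sA} M, N) \cong \Hom_{\sA}(\sA \otimes_{k[t]} M, N) \cong \Hom_{k[t]}(M, N)$, using that $\sA$ is free over itself. Finally, since $M$ is finitely generated projective over $k[t]$, the evaluation/tensor-hom map $N \otimes_{k[t]} \Hom_{k[t]}(M, k[t]) \to \Hom_{k[t]}(M, N)$ is an isomorphism, and one checks that under these identifications $\Psi$ becomes exactly this isomorphism.

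The main obstacle I anticipate is purely bookkeeping: correctly tracking the interplay between the \emph{left} $\sA^e$-module structure on $\sA^e$, the \emph{right} $\sA^e$-module structure on $P^\vee$ (and its conversion to a left structure via the opposition isomorphism), the grading shifts, and the left-versus-right $\sA$-actions on $N$ and on $M^* = \Hom_{k[t]}(M, k[t])$. In particular one must be sure that the formula for $\Psi$ respects the relation $(a \otimes a') \cdot x = ax \otimes x'a'$ that defines the tensor product over $\sA^e$ on the left side, and simultaneously respects $\sA$-linearity of maps $P \otimes_{\sA} M \to N$ on the right side; getting the sides of the actions consistent is where sign/order errors creep in. The only genuinely mathematical input beyond formal nonsense is the hypothesis that $M$ is finitely generated and projective over $k[t]$, which is used exactly twice: once to make $M \mapsto M^*$ a well-behaved duality (so that the right $\sA$-action on $M^*$ exists and $M^{**} \cong M$), and once to invoke the tensor-hom isomorphism $N \otimes_{k[t]} M^* \xrightarrow{\sim} \Hom_{k[t]}(M, N)$ in the final identification. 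Everything else follows by reduction to the rank-one free case and additivity.
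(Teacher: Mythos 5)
Your proof is correct and follows the same approach as the paper's: reduce to the free rank-one case $P=\sA^e$ via additivity/projectivity, then identify both sides with $N\otimes_{k[t]} M^*\cong\Hom_{k[t]}(M,N)$ by adjunction and the tensor-hom isomorphism for $k[t]$-finitely-generated-projective $M$. The paper compresses all of this into one line, whereas you spell out the bookkeeping, but the argument is the same.
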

\begin{proof}
By projectivity, it suffices to check for $P=\sA^e$, where it follows by adjunction.
\end{proof}
\begin{corollary}
Under the hypotheses of Proposition \ref{Hom equals tensor}, the cokernel of the map \eqref{used alpha dual} is 
\bd
\on{coker}\big(1_{M^*}\otimes \alpha^\vee \otimes 1_N: M^*\otimes_{\sA} P_1^\vee(1)\otimes_{\sA} N\rightarrow  M^*\otimes_{\sA} P_0^\vee(2g)\otimes_{\sA} N\big).
\ed
\end{corollary}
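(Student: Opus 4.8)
The plan is to deduce the Corollary directly from Proposition \ref{Hom equals tensor} by a standard cokernel-chase. The point is that Proposition \ref{Hom equals tensor} identifies each of the $\Hom$-terms appearing in \eqref{used alpha dual} with a tensor-product term, and identifies the map \eqref{used alpha dual} itself with the map obtained by dualizing $\alpha$; taking cokernels of a map identified with another map under a natural isomorphism of functors then produces the claimed formula. So the first thing I would do is write down the commutative square expressing naturality of the transformation $\Psi$ with respect to the morphism of projective $\sA^e$-modules $\alpha: P_0(-2g)\to P_1(-1)$: applying $\Psi$ for $P = P_1(-1)$ and $P = P_0(-2g)$ respectively, and using that $\Psi$ is natural in $P$, gives a diagram
\bd
\xymatrix{
\big(N\otimes_{k[t]} M^*\big)\otimes_{\sA^e} P_1(-1)^\vee \ar[d]_{\Psi}^{\cong} \ar[r]^{1\otimes\alpha^\vee} & \big(N\otimes_{k[t]} M^*\big)\otimes_{\sA^e} P_0(-2g)^\vee \ar[d]_{\Psi}^{\cong} \\
\Hom_{\sA}(P_1(-1)\otimes_{\sA} M, N) \ar[r]^{(\alpha\otimes 1)^*} & \Hom_{\sA}(P_0(-2g)\otimes_{\sA} M, N).
}
\ed
Here one uses that the contravariant duality $P\mapsto P^\vee = \Hom_{\sA^e}(P,\sA^e)$ sends $\alpha: P_0(-2g)\to P_1(-1)$ to a map $\alpha^\vee: P_1(-1)^\vee\to P_0(-2g)^\vee$, and that the gradings shift as indicated, so that $P_1(-1)^\vee \cong P_1^\vee(1)$ and $P_0(-2g)^\vee\cong P_0^\vee(2g)$.

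Since the vertical arrows are isomorphisms, the cokernel of the bottom row is isomorphic to the cokernel of the top row, i.e.
\bd
\coker\big((\alpha\otimes 1)^*\big) \cong \coker\Big(1\otimes\alpha^\vee: \big(N\otimes_{k[t]} M^*\big)\otimes_{\sA^e} P_1^\vee(1)\rightarrow \big(N\otimes_{k[t]} M^*\big)\otimes_{\sA^e} P_0^\vee(2g)\Big).
\ed
Finally I would rewrite the right-hand side in the two-sided (left/right $\sA$-module) notation of the statement: for a $\sA$-bimodule $P$ and right $\sA$-module $M^*$, left $\sA$-module $N$, there is a canonical identification $\big(N\otimes_{k[t]} M^*\big)\otimes_{\sA^e} P^\vee \cong M^*\otimes_{\sA} P^\vee\otimes_{\sA} N$, coming from the fact that giving an $\sA^e = \sA\otimes_{k[t]}\sA^{\on{op}}$-module map out of $\big(N\otimes_{k[t]}M^*\big)\otimes_{\sA^e}(-)$ is the same as balancing $M^*$ over $\sA$ on one side and $N$ over $\sA$ on the other. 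Under this identification $1\otimes\alpha^\vee$ becomes $1_{M^*}\otimes\alpha^\vee\otimes 1_N$, which is exactly the displayed formula. This completes the proof.

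The only mild subtlety — and the step I would be most careful about — is bookkeeping with the left/right module structures and the grading shifts: one must check that the right $\sA^e$-module structure on $P_i^\vee$ (used in Proposition \ref{Hom equals tensor} to even form the tensor product) matches the bimodule structure induced from the left-module structure via the flip $(\sA^e)^{\on{op}}\to\sA^e$ of the previous subsection, and that $\alpha^\vee$ is $\sA^e$-linear for these structures with the asserted degree shift $P_1^\vee(1)\to P_0^\vee(2g)$ (consistent with $\alpha$ having been a map $P_0(-2g)\to P_1(-1)$, so $\alpha^\vee$ raises degree by $2g-1$). All of this is routine but needs to be stated; none of it is a genuine obstacle, since Proposition \ref{Hom equals tensor} has already done the real work.
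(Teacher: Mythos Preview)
Your argument is correct and is exactly the intended one: the paper states the corollary without proof because it is an immediate consequence of the naturality of $\Psi$ in Proposition~\ref{Hom equals tensor}, together with the identification $\big(N\otimes_{k[t]} M^*\big)\otimes_{\sA^e} P^\vee \cong M^*\otimes_{\sA} P^\vee\otimes_{\sA} N$ and the grading shifts $P_1(-1)^\vee\cong P_1^\vee(1)$, $P_0(-2g)^\vee\cong P_0^\vee(2g)$. Your remark that the only thing to be careful about is the bimodule/grading bookkeeping is apt, and you have handled it correctly.
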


We note the following identities, which are immediate from adjunction:
\begin{lemma}\label{adjunction identities}
Suppose that $M = \overline{M}[t]$ is the graded left $\sA$-module associated to a finite-dimensional left $\Lambda^q$-module $\overline{M}$.  Then:
\bd
\Hom_{\sA}(P_1\otimes_{\sA} M, N)\cong \Hom_{\sA}(\sA\otimes_{S_t} B[t]\otimes_{S_t} \sA\otimes_{\sA} M, N) \cong \Hom_{S_t}(B\otimes_S M, N)\cong \Hom_S(B\otimes_S \overline{M}, N),
\ed
\bd
\Hom_{\sA}(P_0\otimes_{\sA} M, N)\cong \Hom_{\sA}(\sA\otimes_{S_t}  \sA\otimes_{\sA} M, N) \cong \Hom_{S_t}(M, N)\cong \Hom_S(\overline{M}, N).
\ed
\end{lemma}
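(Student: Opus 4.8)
The plan is to reduce everything to the tensor-hom adjunction in stages, using the explicit descriptions $P_1 = \sA\otimes_{S[t]} B\otimes_{S[t]}\sA$ from \eqref{P_1-eq} and $P_0 = \sA\otimes_{S[t]}\sA$. First I would treat the $P_0$ case, as it is the cleaner of the two and sets the pattern. Here $P_0\otimes_{\sA} M \cong \sA\otimes_{S[t]}\sA\otimes_{\sA} M \cong \sA\otimes_{S[t]} M$, where the right-hand tensor is over $S[t]$ acting on $M$ by restriction of scalars along $S[t]\hookrightarrow\sA$. Applying $\Hom_{\sA}(-,N)$ and the extension-of-scalars adjunction for the map $S[t]\to\sA$ gives $\Hom_{\sA}(\sA\otimes_{S[t]} M, N)\cong \Hom_{S[t]}(M,N)$, where on the right $M,N$ are viewed as $S[t]$-modules. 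Finally, since $M = \overline M[t]$ with $\overline M$ a $\Lambda^q$-module, and $S[t] = S\otimes_k k[t]$ with $t$ acting as the grading shift, one identifies $\Hom_{S[t]}(\overline M[t], N)$ with $\Hom_{S}(\overline M, N)$: a graded $S[t]$-linear map out of the free $k[t]$-module $\overline M[t]$ is determined by its restriction to $\overline M$ in degree $0$, and $S[t]$-linearity reduces to $S$-linearity there. (One should be mildly careful here: strictly one gets $\Hom_S(\overline M,N)$ computing the appropriate graded Hom, but this is exactly the identification used implicitly throughout the section.)

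Next I would do the $P_1$ case in the same spirit, now carrying the extra $B$-factor along. We have $P_1\otimes_{\sA} M \cong \sA\otimes_{S[t]} B\otimes_{S[t]}\sA\otimes_{\sA} M \cong \sA\otimes_{S[t]}(B\otimes_{S[t]} M)$. Applying $\Hom_{\sA}(-,N)$ and the same extension-of-scalars adjunction along $S[t]\to\sA$ yields $\Hom_{\sA}\big(\sA\otimes_{S[t]}(B\otimes_{S[t]} M), N\big)\cong \Hom_{S[t]}(B\otimes_{S[t]} M, N)$. Now I would unwind $B\otimes_{S[t]} M$: since $B$ is the free $S[t]$-bimodule on the arrows (so $B = B(\Qdbl)\otimes_k k[t]$ with $B(\Qdbl)$ the span of the arrows of $\Qdbl$), we get $B\otimes_{S[t]} M \cong B(\Qdbl)\otimes_S \overline M \otimes_k k[t] \cong (B\otimes_S\overline M)[t]$, where I abbreviate $B(\Qdbl)$ as $B$ as the statement does. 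The same argument as before—restricting a graded $S[t]$-linear map on a free graded $k[t]$-module to its generating degree—then gives $\Hom_{S[t]}\big((B\otimes_S\overline M)[t], N\big)\cong \Hom_S(B\otimes_S\overline M, N)$, as claimed. Each intermediate isomorphism in the displayed chains in the statement corresponds to one of these reductions, so writing the proof amounts to recording them in order.

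The only genuine subtlety—and the step I expect to be the main point of care rather than a true obstacle—is bookkeeping the grading and the role of $t$. Because $\sA$, $P_0$, $P_1$, $M$, and $N$ are all $\mathbb Z$-graded and $S[t]$ sits inside $\sA$ with $t$ in degree $1$, one must be sure that "$\Hom$" means graded Hom throughout and that the passage from an $S[t]$-module to the underlying $S$-module (via $M = \overline M[t]$) is compatible with the gradings on both sides; this is exactly why the hypothesis "$M = \overline M[t]$ for a finite-dimensional $\Lambda^q$-module $\overline M$" is imposed, since it makes $M$ free and finitely generated over $k[t]$, so that graded $k[t]$-linear maps out of $M$ are rigidly controlled by their degree-zero part. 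Once that is set up, every isomorphism in the lemma is an instance of adjunction, exactly as the statement advertises, so the proof should be short: "follows from adjunction," together with the explicit identifications $P_i\otimes_{\sA} M$ and $B\otimes_{S[t]} M$ above.
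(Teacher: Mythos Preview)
Your proposal is correct and matches the paper's approach exactly: the paper states the lemma with the preface ``the following identities \dots\ are immediate from adjunction'' and gives no further argument, so your write-up simply unpacks that one-line justification via the two extension-of-scalars adjunctions $S\hookrightarrow S[t]\hookrightarrow\sA$ together with the explicit forms of $P_0$ and $P_1$. Your caution about tracking the grading is appropriate but, as you note, not an obstacle once $M=\overline{M}[t]$ is recognized as $S[t]\otimes_S\overline{M}$.
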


\subsection{The Ext-Complex}\label{sec:Ext complex}
Fix $N\geq 2g$.  Let $\overline{V}$ be a finite-dimensional representation of $\Lambda^q$ of dimension vector $\alpha$, and let $V=\overline{V}[t]$ be the corresponding graded $\sA$-module as in Section \ref{induction and truncation}, and specifically as in Lemma \ref{dual of fin diml lambda module}.
Suppose $W$ is a $\mathbb{Z}_{\geq 0}$-graded $\sA\intN = \sA/\sA_{\geq N+1}$-module, identified with a representation of $\Qgtr$ that has dimension vector $\algtr$.  Thus $\tau_{[0,N]}V$ is also identified with a representation of $\Qgtr$ that has dimension vector $\algtr$.

Let $P_\bullet$ denote the complex of \eqref{the-complex}.  We consider the complex $\Hom_{\sA}(P_\bullet\otimes_{\sA} V, W)$.  Since the sources and target of the Homs in this complex are graded $\sA$-modules, each Hom-space can be regarded as a graded vector space; we write
\bd
\mathsf{Ext} = \left[ \Hom_{\sA\on{-Gr}}(P_0\otimes_{\sA} V, W) \xrightarrow{\beta^\vee} \Hom_{\sA\on{-Gr}}(P_1\otimes_{\sA} V, W(1)) \xrightarrow{\alpha^\vee} \Hom_{\sA\on{-Gr}}(P_0\otimes_{\sA} V, W(2g))\right]
\ed
for its degree $0$ graded piece.  As in \cite{McNKirwan}, using Lemma \ref{adjunction identities} we may identify $\mathsf{Ext}$ with:
\begin{equation}\label{eq:perfect-complex}
L(V_{0},W_{0}) \xrightarrow{\partial_0} E(V_{0}, W_{1})\xrightarrow{\partial_1} L(V_{0}, W_{2g}),
\end{equation}
where $\partial_0 = \beta^\vee_0$ and $\partial_1 = \alpha^\vee_0$.  
\begin{prop}\label{prop:Ext properties}
Suppose that $\tau_{[0,N]}V$ and $W$ are graded $\sA\intN$-modules.  Then:
\begin{enumerate}
\item We have an isomorphism $\on{coker}(\partial_1) \cong \Hom_k\big(\Hom_{\sA\intN\on{-Gr}}(W,\tau_{[0,N]}V), k\big)$. 
\end{enumerate}
If, in addition,  $\tau_{[0,N]}V$  is $\theta$-stable and $W$ is $\theta$-semistable, both of dimension vector $\algtr$, then: 
\begin{enumerate}
\item[(2)] We have $\on{ker}(\partial_0) = 0$ unless $\tau_{[0,N]}V\cong W$, in which case $\on{ker}(\partial_0) \cong k$. 
\item[(3)]  We have that $\on{coker}(\partial_1)$ is zero unless $\tau_{[0,N]}V\cong W$, in which case $\on{coker}(\partial_1)\cong k$.
\end{enumerate}
\end{prop}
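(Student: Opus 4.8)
The plan is to extract everything from the exact sequence \eqref{complex P} together with the computation of $\alpha^\vee$ carried out in Section~4, reducing the three statements to elementary facts about $\operatorname{Hom}$-spaces of (semi)stable quiver representations.

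\emph{Part (1).} First I would apply $\operatorname{Hom}_{\sA}(-,W)$ to the exact sequence $P_0(-2g)\otimes_{\sA} V \to P_1(-1)\otimes_{\sA} V \to P_0\otimes_{\sA} V \to V \to 0$ and take the degree-$0$ graded piece, so that $\operatorname{coker}(\partial_1)$ is computed by the Corollary following Proposition~\ref{Hom equals tensor}: it is the cokernel of $1_{V^*}\otimes\alpha^\vee\otimes 1_W$. Now I would invoke Proposition~\ref{big formulas}: its conclusion is precisely that the elements $m(G_{a_j}D\eta_i^\vee - D\eta_i^\vee G_{a_j})$, etc., and the ``$a$-type'' commutators, all lie in $\operatorname{Im}(1\otimes\alpha^\vee)$. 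Dualizing, an element of $\operatorname{coker}(\partial_1) = \operatorname{coker}(1_{V^*}\otimes\alpha^\vee\otimes 1_W)^\vee$ is exactly a functional on $V^*\otimes_{\sA} P_0^\vee(2g)\otimes_{\sA} W$ that annihilates the image of $\alpha^\vee$; by Proposition~\ref{big formulas} this forces the functional to be compatible with right multiplication by all the $G_a$, $t$, and $a\in H$, i.e.\ to factor through the map identifying the relevant space with $\operatorname{Hom}$ of $\sA\intN$-modules. More precisely, I would argue that $\operatorname{coker}(1_{V^*}\otimes\alpha^\vee\otimes 1_W)$ is naturally identified with $\operatorname{Hom}_{\sA\intN\text{-}\on{Gr}}(W,\tau_{[0,N]}V)^\vee$ by unwinding the self-duality of the complex $P_\bullet$ (cf.\ the diagram in Lemma~\ref{alpha vee formulas} and the dualization set-up of Section~4.3): the point is that $(P_\bullet\otimes_{\sA} V)^\vee$ computes $\operatorname{Hom}$ the other way, and Proposition~\ref{big formulas} says the extra relations needed to pass from $\sA$-linearity to $\sA\intN$-linearity (i.e.\ to ``divide by $G_a$ and $t$'') are automatically imposed on the cokernel. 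This gives (1); I would be careful that ``$\operatorname{Hom}_{\sA\intN\text{-}\on{Gr}}$'' really can be taken over $\sA\intN$ rather than $\sA$ precisely because both $W$ and $\tau_{[0,N]}V$ are concentrated in degrees $[0,N]$.

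\emph{Parts (2) and (3).} For the kernel statement, I would note $\ker(\partial_0) = \operatorname{Hom}_{\sA\text{-}\on{Gr}}(V,W)$ in degree $0$ from the left-exactness of $\operatorname{Hom}_{\sA}(-,W)$ applied to $P_0\otimes_{\sA} V \to V \to 0$ — but since $V = \overline V[t]$ has nothing in negative degrees and $W$ is concentrated in $[0,N]$, a graded $\sA$-module map $V\to W$ factors through $\tau_{[0,N]}V$, so $\ker(\partial_0) = \operatorname{Hom}_{\sA\intN\text{-}\on{Gr}}(\tau_{[0,N]}V, W)$. Then I would apply the standard fact (Schur's lemma for GIT (semi)stability): if $\tau_{[0,N]}V$ is $\thetagtr$-stable and $W$ is $\thetagtr$-semistable of the \emph{same} dimension vector $\algtr$, then any nonzero homomorphism $\tau_{[0,N]}V\to W$ is injective with image a stable subrepresentation of the same dimension, hence is an isomorphism, and $\operatorname{Hom}$ between a stable object and itself is $k$; if $\tau_{[0,N]}V\not\cong W$ the $\operatorname{Hom}$ vanishes. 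This is exactly (2). Statement (3) is then immediate by combining (1) with the same Schur's-lemma argument applied to $\operatorname{Hom}_{\sA\intN\text{-}\on{Gr}}(W,\tau_{[0,N]}V)$ (now with $W$ semistable mapping \emph{to} the stable $\tau_{[0,N]}V$): the space is $k$ if $W\cong\tau_{[0,N]}V$ and $0$ otherwise, and dualizing preserves this.

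\emph{Main obstacle.} The genuinely delicate step is (1): identifying $\operatorname{coker}(\partial_1)$ with the dual of a $\operatorname{Hom}$-space computed over $\sA\intN$ rather than $\sA$. The content of Proposition~\ref{big formulas} is precisely that passing to the cokernel automatically enforces compatibility with the ``division'' operators $Qt^{-\deg Q}$ and $t^{\deg Q}Q^{-1}$, i.e.\ with the universal-localization relations that turn $\sA$-module structure into $\Lambda_t$-module structure in the appropriate range of degrees. Making this precise requires carefully tracking which degrees are in play — one needs $N\geq 2g$ so that all the products $D$, $L_a$, $R_a$ of degree up to $2g$ act within $[0,N]$ — and checking that the relations produced in Proposition~\ref{big formulas} are not merely \emph{some} relations in the image of $\alpha^\vee$ but generate \emph{exactly} the difference between $\sA$-linearity and $\sA\intN$-linearity on the $\operatorname{Hom}$-space. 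Once that bookkeeping is done, (2) and (3) are formal consequences of Schur's lemma for (semi)stable representations.
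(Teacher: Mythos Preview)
Your approach matches the paper's: parts (2) and (3) are handled exactly as you say via Schur's lemma for (semi)stable objects, and part (1) proceeds by dualizing $\partial_1$ with trace pairings and invoking Proposition~\ref{big formulas} to characterize $\ker(\partial_1^*)$.

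The one place where the paper is more concrete than your sketch is the bijection $\ker(\partial_1^*)\cong\Hom_{\sA\intN\text{-}\on{Gr}}(W,\tau_{[0,N]}V)$ in (1). Rather than appealing to any ``self-duality of $P_\bullet$'' (no such structure is used), the paper writes down an explicit inverse pair: given $\phi^*\in\ker(\partial_1^*)\subseteq L(W_{2g},V_0)$, one defines $\Phi^*|_{W_{2g-m}}:=t^{-m}D\phi^*t^m$ and checks directly from the commutation relations furnished by Proposition~\ref{big formulas} that $\Phi^*$ is an $\sA$-module map $W\to\tau_{[0,N]}V$; conversely $\phi^*:=D^{-1}\Phi^*|_{W_{2g}}$. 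A small conceptual correction to your framing: the issue is not ``$\sA$-linearity versus $\sA\intN$-linearity'' (these coincide for maps between $\sA\intN$-modules), but rather extending a bare $S$-linear map $W_{2g}\to V_0$ to a full graded $\sA$-module map on all of $W$. What makes this possible is the invertibility of $D$ on $V$ (since $V=\overline{V}[t]$ comes from a $\Lambda^q$-module), which is exactly the content of Remark~\ref{defined operators} and is what your phrase ``divide by $G_a$ and $t$'' is reaching for.
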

\begin{proof}
Assertion (2) follows from the exactness of \eqref{Hom sequence beginning} and stability.  Similarly, assertion (3) is immediate from assertion (1) by stability of $\tau_{[0,N]}V$ and semistability of $W$.  

Thus it remains to prove assertion (1).
Similarly to Lemma \ref{adjunction identities}, we use Proposition \ref{Hom equals tensor} to identify
\begin{align}\label{first adjunction identity}
\Hom_{\sA\on{-Gr}}(P_0\otimes_{\sA} V, W(2g)) & \cong V^*\otimes_{S_t} W\cong V_0^*\otimes_S W_{2g}\cong \Hom_S(V_0, W_{2g}),\\
\label{second adjunction identity}
\Hom_{\sA\on{-Gr}}(P_1\otimes_{\sA} V, W(1)) & \cong (B\otimes_S V_0)^*\otimes_S  W_1 \cong \Hom_S(B\otimes_S V_0, W_1).
\end{align}
Specifically, we use \eqref{first adjunction identity} to identify $\sum_r \lambda_r\otimes w_r\in V_0^* \otimes_S W_{2g}$ with an element $\phi\in L(V_0,W_{2g})$, i.e., an $I$-graded homomorphism $(\phi_i): V_0\rightarrow W_{2g}$; and we use \eqref{second adjunction identity}
to identify
$\sum_r\lambda_r\otimes w_r \in (B\otimes_S V_0)^*\otimes_S  W_1$ with an element 
$\psi \in E(V_0, W_1)$.  
Under these identifications, the elements 
\bd
\sum_r \lambda_r\big(a_j^* Dt^{-2}\eta_{s(a_j)}^\vee  - Dt^{-2}\eta_{t(a_j)}^\vee  a_j^*\big)w_r,  \hspace{2em}
\sum_r \lambda_r\big(a_j Dt^{-2}\eta_{t(a_j)}^\vee - Dt^{-2}\eta_{s(a_j)}^\vee  a_j\big)w_r
\ed
 of Proposition \ref{big formulas} are identified with
\bd
 \psi_{a_j} a_j^* t^{-2}D - a_j^*\psi_{a_j}t^{-2}D
 \hspace{2em} \text{and} \hspace{2em} 
\psi_{a_j^*} a_j t^{-2}D - a_j\psi_{a_j^*}t^{-2}D
\ed
for $\psi\in E(V_0,W_1) \cong \Hom_{\sA\on{-Gr}}(P_1\otimes_{\sA} V,W(1))$.

Via the trace pairings, the $k$-linear dual of $\partial_1$ is a map
$L(W_{2g}, V_0) \xrightarrow{\partial_1^*} E(W_1,V_0)$;
an element $\phi^*\in L(W_{2g}, V_0)$ satisfies $\partial_1^*(\phi^*) = 0$ only if
\bd
\on{tr}\left[\phi^* \psi_{a_j} a_j^* t^{-2}D - \phi^* a_j^*\psi_{a_j}t^{-2}D\right] = 0 
\hspace{1em} \text{and} \hspace{1em}
\on{tr}\left[\phi^* \psi_{a_j^*} a_j t^{-2}D - \phi^* a_j\psi_{a_j^*}t^{-2}D\right] = 0
\ed
for all $\psi\in E(V_0,W_1)$.  Since each $G_{a_j}t^{-2}$ acts as an isomorphism on $V^*$,  the elements $\lambda G_{a_j}t^{-2}\eta_{a_j}w$ and  $\lambda G_{a_j^*}t^{-2}\eta_{a_j^*}w$, for $\lambda\in V_0^*, w\in W_1$, collectively generate $\Hom_{\sA\on{-Gr}}(P_1\otimes_{\sA} V, W(1))$; it follows that  
an element $\phi^*\in L(W_{2g}, V_0)$ satisfies $\partial_1^*(\phi^*) = 0$ if and only if the above conditions are satisfied for all $\psi\in E(V_0,W_1)$.

Cyclically permuting, these conditions become
\begin{equation}\label{phi star conditions}
a_j^*t^{-2}D\phi^* - t^{-2}D\phi^*a_j^* = 0 
\hspace{1em} \text{and} \hspace{1em}
a_jt^{-2}D\phi^* - t^{-2}D\phi^*a_j = 0.
\end{equation}
Given $\phi^*\in L(W_{2g}, V_0)$ satisfying these conditions, define $\Phi^*: W\rightarrow \tau_{[0,N]} V$ by taking
$\Phi^*|_{W_{2g-m}} = t^{-m}D\phi^*t^{m}$.  It is immediate from the conditions \eqref{phi star conditions} that on $W_{2g-m}$, $m\geq 2$, we have that $\Phi^*$ commutes with all $a_j$ and $a_j^*$, whereas for $m=1$ we may write $\Phi^*|_{W_{2g-1}} = t t^{-2}D\phi^*t$ and again $\Phi^*$ commutes with $a_j, a_j^*$.  Thus $\Phi^*$ defines an $\sA\intN$-linear homomorphism $W\rightarrow \tau_{[0,N]} V$, yielding a linear map $\on{ker}(\partial_1^*)\hookrightarrow \Hom_{\sA\intN\on{-Gr}}(W, \tau_{[0,N]} V)$.  Conversely, given a graded $\sA\intN$-module homomorphism $\Phi^*: W\rightarrow \tau_{[0,N]} V$, defining $\phi^*: W_{2g}\rightarrow V_0$ by $\phi^* = D\inv\Phi^*|_{W_{2g}}$, we see that $\phi^*\in\on{ker}(\partial_1^*)$.  This completes the proof.
\end{proof}

\section{Cohomology of Varieties and Stacks}\label{sec:coh}
\begin{center}
{\bf
In the remainder of the paper, the base field $k$ is assumed to be $\mathbb{C}$. 
}
\end{center} 
Here as throughout the paper, we use $H^*(X)$ to denote cohomology with $\mathbb{Q}$-coefficients, and $H^{\on{BM}}_*(X)$ to denote Borel-Moore homology with $\mathbb{Q}$-coefficients; if $X$ is a smooth Deligne-Mumford stack, there is a canonical isomorphism $H^*(X) \cong H^{\on{BM}}_*(X)$.
\subsection{Mixed Hodge Structure on the Cohomology of an Algebraic Stack}
Suppose that $\mathfrak{X}$ is an algebraic stack of finite type over $\mathbb{C}$.  It follows from Example 8.3.7 of
\cite{hodge-III} that the cohomology $H^*(\mathfrak{X})$ comes equipped with a functorial mixed Hodge structure.
\begin{prop}
Suppose $\mathfrak{X}$ is a complex Deligne-Mumford stack with the action of the commutative group stack $BH$ for some finite group $H$, and that $\mathfrak{X}$ has a coarse moduli space $\mathfrak{X}\rightarrow\on{sp}(\mathfrak{X})$ with an isomorphism $\mathfrak{X}\rightarrow\on{sp}(\mathfrak{X}) = \mathfrak{X}/BH$.  Then 
$H^*(\mathfrak{X},\mathbb{Q}) = H^*\big(\on{sp}(\mathfrak{X}),\mathbb{Q}\big)$ as mixed Hodge structures.\footnote{We explicitly write the $\mathbb{Q}$-coefficients to emphasize that they are essential.}
\end{prop}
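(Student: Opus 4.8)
The plan is to reduce everything to two statements about the coarse-space morphism $\pi\colon\mathfrak{X}\to\on{sp}(\mathfrak{X})$: first, that the pullback $\pi^*\colon H^*(\on{sp}(\mathfrak{X}),\mathbb{Q})\to H^*(\mathfrak{X},\mathbb{Q})$ is an isomorphism of $\mathbb{Q}$-vector spaces; second, that $\pi^*$ is a morphism of mixed Hodge structures. Granting these, the proposition follows at once: a morphism of mixed Hodge structures whose underlying $\mathbb{Q}$-linear map is bijective is automatically an isomorphism of mixed Hodge structures, because morphisms of mixed Hodge structures are strict for both filtrations (see \cite{Deligne}).

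For the first statement I would use the hypothesis that $\on{sp}(\mathfrak{X})=\mathfrak{X}/BH$ with $\pi$ the quotient morphism, so that $\pi$ is a torsor under the group stack $BH$; in particular, \'etale-locally on $\on{sp}(\mathfrak{X})$ it is of the form $BH\times U\to U$. Since $|H|$ is invertible in $\mathbb{Q}$, group cohomology gives $H^0(BH,\mathbb{Q})=\mathbb{Q}$ and $H^{>0}(BH,\mathbb{Q})=0$, and therefore $R^0\pi_*\mathbb{Q}_{\mathfrak{X}}=\mathbb{Q}_{\on{sp}(\mathfrak{X})}$ while $R^{q}\pi_*\mathbb{Q}_{\mathfrak{X}}=0$ for $q>0$ (a claim that is local on $\on{sp}(\mathfrak{X})$). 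The Leray spectral sequence of $\pi$ then degenerates, and its edge homomorphism $\pi^*$ is an isomorphism.

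For the second statement I would appeal to the functoriality of the mixed Hodge structure on the cohomology of a finite-type complex algebraic stack, as recorded above following Example 8.3.7 of \cite{hodge-III}, together with the compatibility of that mixed Hodge structure, for a scheme, with Deligne's classical one: $\pi$ is a morphism of finite-type stacks over $\mathbb{C}$ whose target happens to be a scheme, so $\pi^*$ is a morphism of mixed Hodge structures. Concretely, this amounts to choosing compatible smooth simplicial hyperresolutions of $\mathfrak{X}$ and of $\on{sp}(\mathfrak{X})$ lying over $\pi$ and inspecting the induced map of mixed Hodge complexes. This last point — checking that the functoriality of the stacky mixed Hodge structure genuinely applies to the \emph{non-representable} morphism $\pi$, and that it recovers Deligne's structure on the coarse scheme $\on{sp}(\mathfrak{X})$ — is the only place that requires care; once it is in hand, combining the two statements completes the proof.
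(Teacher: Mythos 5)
Your proposal is correct and takes essentially the same approach as the paper: the paper's proof is the one-line instruction to ``use the Leray spectral sequence and the fact that $H^*(BH,\mathbb{Q})=\mathbb{Q}$ for finite $H$,'' which is precisely your first step, and your second and third steps (functoriality of the mixed Hodge structure plus strictness) are the standard, implicit bookkeeping that upgrades the $\mathbb{Q}$-linear isomorphism to an isomorphism of mixed Hodge structures.
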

\begin{proof}
Use the Leray spectral sequence and the fact that $H^*(BH, \mathbb{Q}) = \mathbb{Q}$ for a finite group $H$.
\end{proof}

\subsection{Pushforwards and the Projection Formula}  
Suppose $f: X\rightarrow Y$ is a proper morphism of relative dimension $d$ of smooth, connected Deligne-Mumford stacks.  Then there is a pushforward, or Gysin, map $f_*: H^*(X)\rightarrow H^{*-d}(Y)$.  
\begin{prop}[\cite{dCM}]\label{dCM}
If $X$ and $Y$ are of finite type (so their cohomologies support canonical mixed Hodge structures), the Gysin map $f_*$ is a morphism of mixed Hodge structures.
\end{prop}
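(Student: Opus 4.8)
The plan is to realise the Gysin map as a composite of Poincar\'e duality isomorphisms and the proper pushforward on Borel--Moore homology, each compatible with mixed Hodge structures up to a Tate twist, and then to reduce the Deligne--Mumford statement to the case of finite type schemes, where it is the content of \cite{dCM}. Writing $d$ for the complex relative dimension of $f$, recall that for a smooth connected finite type Deligne--Mumford stack $Z$ of complex dimension $n$ cap product with the fundamental class gives a Poincar\'e duality isomorphism $\mathrm{PD}_Z\colon H^k(Z)\xrightarrow{\sim}H^{\on{BM}}_{2n-k}(Z)$ (the canonical isomorphism $H^*(Z)\cong H^{\on{BM}}_*(Z)$ recalled above), and that $f_*$ may be taken to be the composite
\[
H^k(X)\xrightarrow{\mathrm{PD}_X}H^{\on{BM}}_{2\dim X-k}(X)\xrightarrow{f_*}H^{\on{BM}}_{2\dim X-k}(Y)\xrightarrow{\mathrm{PD}_Y^{-1}}H^{k-2d}(Y),
\]
where the middle arrow is the proper pushforward on Borel--Moore homology. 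Since a Tate twist of a morphism of mixed Hodge structures is again one, it suffices to check that each $\mathrm{PD}_Z$ is an isomorphism of mixed Hodge structures up to a Tate twist and that the proper pushforward on Borel--Moore homology is a morphism of mixed Hodge structures. Both hold for finite type schemes over $\mathbb{C}$ by classical mixed Hodge theory (\cite{Deligne}), and in the normalisation needed here are recorded in \cite{dCM}. Furthermore, every Deligne--Mumford stack that actually occurs in this paper is a global quotient $[V/\bS]$ of a quasiprojective scheme $V$ by the linear algebraic group $\bS$, and every morphism between such stacks that we need is induced by a proper $\bS$-equivariant morphism of the underlying quasiprojective schemes; so I would first settle that case.

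For a quotient stack one has $H^*([V/\bS])=H^*_{\bS}(V)$, with its mixed Hodge structure computed from the usual Totaro--Edidin--Graham finite-dimensional approximations: for an open subset $U$, with complement of codimension $>k$, of a representation of $\bS$ on which $\bS$ acts freely, the scheme $V\times_{\bS}U$ is smooth and of finite type and the restriction $H^k([V/\bS])\xrightarrow{\sim}H^k(V\times_{\bS}U)$ is an isomorphism of mixed Hodge structures. A proper $\bS$-equivariant morphism $V\to W$ induces proper morphisms $f_U\colon V\times_{\bS}U\to W\times_{\bS}U$ of smooth finite type schemes, each of complex relative dimension $d$, hence Gysin maps $(f_U)_*\colon H^k(V\times_{\bS}U)\to H^{k-2d}(W\times_{\bS}U)$ which are morphisms of mixed Hodge structures by the scheme case \cite{dCM}. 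The squares relating the approximations for $U\subseteq U'$ are Cartesian with flat horizontal maps, so the $(f_U)_*$ commute with restriction and therefore, in the stable range, assemble to the equivariant Gysin map, which is exactly $f_*$. A linear map that agrees in each degree with a morphism of mixed Hodge structures is one, so $f_*$ is a morphism of mixed Hodge structures.

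The point requiring the most care is the bookkeeping of the previous paragraph: verifying that the approximate Gysin maps $(f_U)_*$ are compatible with the restriction maps (base change for the Cartesian squares relating the $V\times_{\bS}U$) and that their stabilised value is the Gysin map of the stacks defined via $\mathrm{PD}$. This is routine for proper morphisms, but it is the only place where the argument has content beyond \cite{dCM}. For a general proper morphism of smooth connected finite type Deligne--Mumford stacks one proceeds along the same lines with a smooth simplicial hypercover by schemes and the associated descent spectral sequence (\cite{hodge-III}) in place of the finite-dimensional approximations --- the termwise maps need no longer be proper, so one identifies $f_*$ through its compatibility with $\mathrm{PD}$ rather than through termwise Gysin maps --- or else one invokes mixed Hodge modules on Deligne--Mumford stacks: for $f$ proper and smooth of relative dimension $d$, the counit $f_!f^!\to\mathrm{id}$ together with the purity isomorphism $f^!\cong f^*(d)[2d]$ furnishes a morphism $f_*\mathbb{Q}^H_X\to\mathbb{Q}^H_Y(-d)[-2d]$ in the derived category of mixed Hodge modules on $Y$, which on passing to hypercohomology exhibits $f_*$ as a morphism of mixed Hodge structures directly.
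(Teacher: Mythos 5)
The paper itself offers no proof of this statement: it is given as a citation to \cite{dCM}, where de~Cataldo and Migliorini prove the result for finite-type \emph{schemes} over $\mathbb{C}$; the extension to smooth Deligne--Mumford stacks (the generality in which the paper actually uses it, e.g.\ in Proposition~\ref{prop:semiproj-coh}) is left tacit. Your proposal supplies that extension, which is genuine content beyond what the paper records, and the strategy---reduce the quotient-stack case to the scheme case via Totaro--Edidin--Graham finite-dimensional approximations, verify compatibility of the approximating Gysin maps with restriction, then invoke \cite{dCM} for the scheme case---is sound. One imprecision is worth flagging: you assert that every morphism the paper needs ``is induced by a proper $\bS$-equivariant morphism of the underlying quasiprojective schemes,'' but the morphism $q\colon\mathsf{W}\to\resol$ of Proposition~\ref{prop:semiproj-coh}, produced by the Kresch--Vistoli covering theorem, goes from a scheme $\mathsf{W}$ carrying no distinguished $\bS$-action to the quotient stack $\resol$; it is not literally a $\bS$-equivariant morphism of schemes. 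What is true is that $q$ corresponds to a $\bS$-torsor $P\to\mathsf{W}$ together with a finite $\bS$-equivariant morphism $P\to W$, so that $H^*(\mathsf{W})\cong H^*_{\bS}(P)$, and the approximation argument can be run with $P\to W$ in place of a $\bS$-equivariant map out of $\mathsf{W}$ itself. With that adjustment your reduction is correct; and your closing remark---deducing the statement from $f_!f^!\to\operatorname{id}$ and the purity isomorphism in the formalism of mixed Hodge modules---is in fact the cleanest route for an arbitrary proper morphism of smooth finite-type Deligne--Mumford stacks, at the cost of invoking heavier machinery than \cite{dCM}.
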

The Gysin map satisfies the projection formula: for classes $c\in H^*(X), c'\in H^*(Y)$, we have
\begin{equation}\label{projection formula}
f_*(c\cup f^*c') = f_*(c)\cup c'.
\end{equation}
Suppose $X$ and $Y$ are smooth Deligne-Mumford stacks and $C\in H^*(X\times Y)$ is a cohomology class.  By the K\"unneth theorem we have $H^*(X\times Y)\cong H^*(X)\otimes H^*(Y)$, and thus we may write $C = \sum x_i\otimes y_i$ with $x_i\in H^*(X)$, $y_i\in H^*(Y)$.  The classes $x_i$, $y_i$ are the {\em K\"unneth components} of $C$ (with respect to $X$ or $Y$ respectively).  

Now suppose that $f: X\rightarrow Y$ is a representable morphism from a smooth Deligne-Mumford stack $X$ to a smooth, proper Deligne-Mumford stack $Y$.  
The graph morphism $X\xrightarrow{(1,f)} X\times Y$ is not usually a closed immersion.  

\begin{prop}[cf. Proposition 2.1 of \cite{McNKirwan}]\label{prop:image}
The image of $f^*: H^*(Y)\rightarrow H^*(X)$ is contained in the span of the K\"unneth components of $(1,f)_*[X]$ with respect to the left-hand factor $X$.
\end{prop}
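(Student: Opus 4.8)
The plan is to compare pullback along $f$ with the correspondence induced by the graph cycle, using the projection formula and properness of $Y$. First I would consider the graph morphism $j = (1,f): X\rightarrow X\times Y$ together with the two projections $p_X: X\times Y\rightarrow X$ and $p_Y: X\times Y\rightarrow Y$; note $p_X\circ j = \on{id}_X$ and $p_Y\circ j = f$. Since $Y$ is proper (and $X$ is smooth, so that $j$ is proper of the appropriate relative dimension), the Gysin pushforward $j_*$ is defined and $j_*[X]\in H^*(X\times Y)$ makes sense. By the Künneth theorem write $j_*[X] = \sum_i x_i\otimes y_i$ with $x_i\in H^*(X)$ and $y_i\in H^*(Y)$; these $x_i$ are precisely the Künneth components with respect to the left-hand factor $X$ whose span we must capture.

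The key computation is that for any $c'\in H^*(Y)$ one has
\begin{equation}\label{eq:pullback-via-graph}
f^*c' = j^*\,p_Y^*\,c' = j^*\bigl(p_Y^*c' \cup 1\bigr),
\end{equation}
and then to feed this through the projection formula for $p_X$. Precisely, since $j$ is a section of $p_X$, for any class $\xi\in H^*(X\times Y)$ we have $j^*\xi = j^*\bigl(\xi\cup p_X^* 1\bigr)$, and more usefully $p_{X*}\bigl(j_*[X]\cup p_Y^*c'\bigr) = p_{X*}\bigl(j_*(j^*p_Y^*c')\bigr) = p_{X*}j_*(f^*c') = (p_X j)_*(f^*c') = f^*c'$, using $j_*(j^*\eta) = j_*[X]\cup\eta$ (projection formula for $j$) in the first step and $p_X\circ j = \on{id}_X$ in the last. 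Thus $f^*c' = p_{X*}\bigl((\sum_i x_i\otimes y_i)\cup (1\otimes c')\bigr) = \sum_i x_i\cdot p_{X*}\bigl(1\otimes (y_i\cup c')\bigr)$; by the Künneth formula for $p_{X*}$ (integration over the proper factor $Y$), $p_{X*}\bigl(1\otimes(y_i\cup c')\bigr)$ is a scalar, namely $\int_Y y_i\cup c'$, so $f^*c'$ is a $\mathbb{Q}$-linear combination of the $x_i$. This shows $\on{im}(f^*)\subseteq \on{span}\{x_i\}$, which is the assertion.

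The main obstacle is bookkeeping rather than ideas: making sure the Gysin/pushforward formalism, the projection formula, and the Künneth decomposition are all valid in the setting of smooth (not necessarily proper) Deligne--Mumford stacks $X$ mapping to a smooth \emph{proper} stack $Y$, and that degree shifts are handled correctly so that $j_*[X]$ and the pushforwards along $p_X$ land in the expected degrees. The representability hypothesis on $f$ (hence on $j$) and the properness of $Y$ ensure $j$ and $p_X$ are proper, so $j_*$ and $p_{X*}$ exist; the projection formula \eqref{projection formula} and the compatibility of Gysin maps with Künneth decompositions are standard and, as the paper notes, the relevant facts in the DM-stack setting are exactly those recalled above (cf. \cite{dCM}). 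With those in hand the argument is the short chain of identities sketched in \eqref{eq:pullback-via-graph} and the display following it, essentially identical to Proposition~2.1 of \cite{McNKirwan}.
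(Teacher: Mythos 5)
Your proposal is correct and matches the paper's argument: both hinge on the identity $f^* = (p_X)_*\big((1,f)_*[X]\cap p_Y^*(-)\big)$, derived from $p_X\circ(1,f)=\mathrm{id}$, $p_Y\circ(1,f)=f$, and the projection formula for $(1,f)$. You merely spell out the final step (decomposing $(1,f)_*[X]$ by K\"unneth and reading off scalars via $(p_X)_*$) which the paper leaves implicit.
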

\begin{proof}
Write $\xymatrix{X &  X\times Y \ar[l]_{p_X} \ar[r]^{p_Y} & Y}$ for the projections.    Write $p_*: Y\rightarrow \on{Spec}(\mathbb{C})$ for the projection to a point; then $(p_X)_*$ exists since $Y$ is proper.  We have $f^* = (1,f)^*p_Y^*$ and $(p_X)_* (1,f)_* = \on{id}$.  Using the projection formula, then, we get
\bd
f^* = (p_X)_*(1,f)_*f^* = (p_X)_*(1,f)_*(1,f)^*p_Y^* = (p_X)_*\big((1,f)_*[X] \cap p_Y^*(-)\big).
\ed
This proves the claim.
\end{proof}

\subsection{Cohomology of Compactifications}
A finite-type Deligne-Mumford stack $\resol$ is {\em quasi-projective} if its coarse space $\on{sp}(\resol)$ is a quasi-projective scheme.  For example, if a reductive group $\bS$ acts on a polarized quasiprojective variety $\mathbb{M}$, then any open substack of $\mathbb{M}^s/\bS$ is a quasi-projective Deligne-Mumford stack.\footnote{Here $\mathbb{M}^s$ means stable points in the GIT sense: in particular, stabilizers are finite.}

The cohomology $H^k(M)$ is {\em pure} if its mixed Hodge structure is pure of weight $k$: that is, $W_k\big(H^k(M)\big) = H^k(M)$.  We say {\em $H^*(M)$ is pure} if each $H^k(M)$ is pure. 
\begin{prop}\label{prop:semiproj-coh}
Suppose $\mathfrak{Y} = Y/\mathbb{G}$ is a quotient stack (i.e., the quotient of an algebraic space by a linear algebraic group scheme) and that $\resol^\circ\subset\resol\subset\mathfrak{Y}$ are open, separated, quasi-projective, smooth Deligne-Mumford substacks of $\mathfrak{Y}$.  Then the image of the restriction map $H^k(\resol)\rightarrow H^k(\resol^\circ)$ contains $W_k\big(H^k(\resol^\circ)\big)$; in particular, if $H^*(\resol^\circ)$ is pure, then the restriction map is surjective.
\end{prop}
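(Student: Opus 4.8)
The plan is to reduce the assertion to Deligne's theory of weights by embedding $\resol$ as an open substack of a smooth projective Deligne-Mumford stack $P$; the key observation is then that the restriction map $H^k(\resol)\to H^k(\resol^\circ)$ in the statement is a factor of the restriction $H^k(P)\to H^k(\resol^\circ)$, whose image is exactly $W_k\big(H^k(\resol^\circ)\big)$. First I would reduce to the case that $\resol$ is connected: as $\resol$ is smooth it is a finite disjoint union of its (irreducible) connected components, the restriction map and the weight filtration decompose accordingly, and once $\resol$ is irreducible every nonempty open substack --- in particular $\resol^\circ$ --- is dense in it.

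Next I would produce the compactification, following the construction in the proof of Proposition \ref{prop:compactification}. Since $\resol$ is a quasi-projective Deligne-Mumford stack open in the quotient stack $\mathfrak{Y}=Y/\mathbb{G}$, one may realize $\resol$ as an open substack of a proper Artin stack --- for instance by presenting a suitable open part of $\mathfrak{Y}$ containing $\resol$ as a GIT quotient of a quasi-projective scheme by a reductive group and equivariantly compactifying the scheme --- then apply the techniques of \cite{Kirwan, Edidin} to replace this proper Artin stack by a Deligne-Mumford stack with projective coarse space that is an isomorphism over the finite-stabilizer locus, hence over $\resol$, and finally apply equivariant resolution of singularities (legitimate because this stack is a global quotient of a quasi-projective scheme by a linear algebraic group) to obtain a smooth projective Deligne-Mumford stack $P$ with an open immersion $\resol\hookrightarrow P$. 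The composite $\resol^\circ\hookrightarrow\resol\hookrightarrow P$ then realizes $\resol^\circ$ as a dense open substack of the smooth proper Deligne-Mumford stack $P$.

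Finally I would invoke Deligne's weight theorem for the dense open immersion $j\colon\resol^\circ\hookrightarrow P$ into the smooth proper $P$ (\cite{Deligne}): the image of $j^*\colon H^k(P)\to H^k(\resol^\circ)$ equals $W_k\big(H^k(\resol^\circ)\big)$. This applies in the Deligne-Mumford setting because rational cohomology and its functorial mixed Hodge structure (Example 8.3.7 of \cite{hodge-III}) are unchanged under passage to coarse spaces, reducing the claim to the case of smooth proper varieties. Because $j$ factors through $\resol$, the map $j^*$ factors as $H^k(P)\to H^k(\resol)\to H^k(\resol^\circ)$, so the image of the restriction $H^k(\resol)\to H^k(\resol^\circ)$ contains $\operatorname{im}(j^*)=W_k\big(H^k(\resol^\circ)\big)$; this is the first assertion. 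If moreover $H^*(\resol^\circ)$ is pure, then $W_k\big(H^k(\resol^\circ)\big)=H^k(\resol^\circ)$ for every $k$, so the restriction map is surjective.

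The step I expect to be the main obstacle is the construction of $P$: verifying that $\mathfrak{Y}$ (or the relevant open part containing $\resol$) admits a GIT presentation to which equivariant compactification and the machinery of \cite{Kirwan, Edidin} and equivariant resolution genuinely apply, so that one really does obtain a smooth projective Deligne-Mumford stack containing $\resol$ as an open substack. The remaining Hodge-theoretic input is standard once one knows that Deligne's weight formalism carries over from smooth proper varieties to smooth proper Deligne-Mumford stacks.
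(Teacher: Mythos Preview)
Your overall strategy---compactify, factor the restriction through a smooth proper object, and invoke Deligne's description of $W_kH^k$ as the image from the compactification---is the right shape, but it differs from the paper's proof and carries two genuine gaps.

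The paper does \emph{not} compactify the stack $\resol$ directly. Instead it proceeds in two steps. First, for smooth quasi-projective \emph{schemes} $\resol^\circ\subset\resol$, it takes any smooth projective compactification $\overline{\resol}$ of $\resol$ and uses Deligne's Corollaire~3.2.17 (together with an independence-of-compactification argument) to identify $W_kH^k(\resol^\circ)$ with the image of $H^k(\overline{\resol})$, which visibly factors through $H^k(\resol)$. Second, for Deligne--Mumford stacks, it invokes Kresch--Vistoli \cite{KreschVistoli} to produce a smooth quasi-projective scheme $\mathsf{W}$ with a \emph{finite flat LCI} morphism $q\colon\mathsf{W}\to\resol$, forms the fibered square over $\resol^\circ$, and uses that Gysin pushforward along $q$ is surjective on rational cohomology and is a morphism of mixed Hodge structures. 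This transfers the scheme case to the stack case without ever building a stacky compactification.

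Against this, your argument has two weak points. First, your construction of $P$: the techniques of \cite{Kirwan, Edidin} are formulated for \emph{reductive} group actions with a linearization on a projective scheme, whereas here $\mathbb{G}$ is only assumed linear algebraic and $Y$ is an arbitrary algebraic space, so it is not clear your proposed GIT presentation and partial desingularization actually exist in this generality. You flag this yourself, but it is a real obstruction, not a technicality. Second, your justification of Deligne's weight theorem for Deligne--Mumford stacks by ``passage to coarse spaces'' does not reduce to smooth proper varieties: the coarse space of a smooth DM stack typically has quotient singularities and is \emph{not} smooth, so Corollaire~3.2.17 of \cite{Deligne} does not apply as stated. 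One can rescue this (quotient singularities are $\mathbb{Q}$-homology manifolds, and the relevant Hodge-theoretic statements extend), but that requires an argument you have not supplied.

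The paper's route via a finite flat scheme cover sidesteps both problems at once: it lands you immediately in the setting where Deligne's result is literally available, and no stacky compactification is ever needed.
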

\begin{proof}
Consider first the case of smooth quasi-projective varieties $\resol^\circ\subset\resol$.  Then, for any smooth projective compactification $\overline{\resol}$ of $\resol$, the image of $H^*(\overline{\resol})\rightarrow H^*(\resol^\circ)$ is independent of the choice of $\overline{\resol}$: for example, by the Weak Factorization theorem, any two such $\overline{\resol}, \overline{\resol}'$ are related by a sequence of blow-ups and blow-downs along smooth centers in the complement of $\resol^\circ$, and the claimed independence follows from the usual formula for the cohomology of a blow-up.  Since the image of $H^k(\overline{\resol})$ in $H^k(\resol^\circ)$ is $W_k\big(H^*K(\resol^\circ)\big)$ by Corollaire 3.2.17 of \cite{Deligne}, the claim follows in this case.  

We now consider the general case.  By the assumptions, $\resol$ and $\resol^\circ$ are (separated) quasi-projective smooth Deligne-Mumford stacks that are global quotients.  By Theorem 1 of \cite{KreschVistoli},  there exist a smooth quasi-projective scheme $\mathsf{W}$ and a finite flat LCI morphism $\mathsf{W}\rightarrow \resol$; the fiber product $\resol^{\circ}\times_{\resol} \mathsf{W} \rightarrow \resol^{\circ}$ is then also finite, flat, and LCI.  Using the commutative square
\bd
\xymatrix{\resol^{\circ}\times_{\resol} \mathsf{W}  \ar[r]^{\hspace{2em}\wt{j}} \ar[d]^{q^\circ} & \mathsf{W}\ar[d]^{q}\\
\resol^{\circ} \ar[r]^{j} & \resol
}
\ed
and base change, we find:
\begin{enumerate}
\item $H^k(\mathsf{W}) \xrightarrow{q_*} H^k(\resol)$ and $H^k(\resol^{\circ}\times_{\resol} \mathsf{W})\xrightarrow{q^\circ_*} H^k(\resol^\circ)$ are surjective (indeed, $q_*q^*$ and $q^\circ_*(q^\circ)^*$ are multiplication by the degree of $q$).
\item Since the Gysin maps $q^\circ_*, q_*$ are morphisms of mixed Hodge structures by Proposition \ref{dCM},
\bd
W_k\big(H^k(\resol^{\circ}\times_{\resol} \mathsf{W})\big)\xrightarrow{q^\circ_*} W_k\big(H^k(\resol^\circ)\big) \hspace{1em} \text{is surjective}.
\ed
\item The image of $H^k(\mathsf{W})$ in $H^k(\resol^{\circ}\times_{\resol} \mathsf{W})$ contains $W_k\big(H^k(\resol^{\circ}\times_{\resol} \mathsf{W})\big)$, by the conclusion of the previous paragraph.
\end{enumerate}
The assertion is now immediate.
\end{proof}

\subsection{Markman's Formula for Chern Classes of Complexes}
Suppose that $\mathfrak{M}$ is a smooth Deligne-Mumford stack and 
\begin{equation}\label{Markman complex}
C: V_{-1}\xrightarrow{g} V_0\xrightarrow{f} V_1
\end{equation} is a complex of locally free sheaves on $\mathfrak{M}$ of ranks $r_{-1}, r_0, r_1$ respectively.
\begin{prop}[Lemma 4 of \cite{Markman}]\label{prop:Markman}
Suppose that $\Gamma\subset \mathfrak{M}$ is a smooth closed substack of pure codimension $m$, and that
the complex $C$ of \eqref{Markman complex} satisfies:
\begin{enumerate}
\item $\mathcal{H}^{-1}(C) = 0$, 
\item $\mathcal{H}^1(C)$ and $\mathcal{H}^1(C^\vee)$ are line bundles on $\Gamma$,
\item $m\geq 2$ and $\on{rk}(C) = m-2$.
\end{enumerate}
Then if $m$ is even, $c_m(C) = [\Gamma]$ and $c_m\big(\mathcal{H}^0(C)\big) = \left(1-(m-1)!\right)[\Gamma]$.
\end{prop}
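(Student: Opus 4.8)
The plan is to reduce the statement to a local computation near $\Gamma$ and then to fix the universal coefficient by a Chern-class calculation on a minimal model of $C$ along $\Gamma$. First I would record a $K$-theory reduction. Since $\mathcal{H}^{-1}(C)=0$, the class of $C$ in $K$-theory is $[C]=[\mathcal{H}^0(C)]-[\mathcal{H}^1(C)]$, and by hypothesis $\mathcal{H}^1(C)=i_*L$ for the inclusion $i\colon\Gamma\hookrightarrow\mathfrak{M}$ and a line bundle $L$ on $\Gamma$; hence $c(C)=c(\mathcal{H}^0(C))\cdot c(i_*L)^{-1}$. The total Chern class $c(i_*L)$ has trivial components in codimensions $1,\dots,m-1$, and its leading term is $c_m(i_*L)=(-1)^{m-1}(m-1)!\,[\Gamma]$ — the standard computation via the Koszul resolution of $i_*\mathcal{O}_\Gamma$ twisted by $L$ together with Newton's identities — and it does not depend on $L$. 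Taking codimension-$m$ parts therefore yields $c_m(C)=c_m(\mathcal{H}^0(C))-(-1)^{m-1}(m-1)!\,[\Gamma]$, so for $m$ even the two displayed formulas are equivalent, and it suffices to prove $c_m(C)=[\Gamma]$.

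Second, I would localize. Away from $\Gamma$ the hypotheses force $g$ to be a subbundle inclusion (as $\mathcal{H}^1(C^\vee)$, i.e.\ $\operatorname{coker}(g^\vee)$, vanishes there) and $f$ to be surjective (as $\mathcal{H}^1(C)$ vanishes there), so $C$ is quasi-isomorphic on $\mathfrak{M}\setminus\Gamma$ to the locally free sheaf $\mathcal{H}^0(C)$ of rank $m-2$. Consequently $c_j(C)$ restricts to $0$ on $\mathfrak{M}\setminus\Gamma$ for $j\geq m-1$, so these classes come from $H^{2j}_\Gamma(\mathfrak{M})$. By purity (the Gysin isomorphism $H^{j}_\Gamma(\mathfrak{M})\cong H^{j-2m}(\Gamma)$) one gets $H^{2(m-1)}_\Gamma(\mathfrak{M})=0$, forcing $c_{m-1}(C)=0$, while $H^{2m}_\Gamma(\mathfrak{M})$ is freely generated over $\mathbb{Q}$ by the fundamental classes of the irreducible components $\Gamma_\alpha$ of $\Gamma$; thus $c_m(C)=\sum_\alpha\lambda_\alpha[\Gamma_\alpha]$ for scalars $\lambda_\alpha$, and it remains to show each $\lambda_\alpha=1$.

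Third, I would pin down $\lambda_\alpha$ at the generic point $\eta$ of a component $\Gamma_\alpha$. Over the regular local ring $\mathcal{O}_{\mathfrak{M},\eta}$ the complex of free modules $C$ splits off a contractible complex, leaving a minimal model whose term-by-term ranks are the dimensions of the cohomology of the fibre at $\eta$, namely $(h^{-1},h^0,h^1)=(1,m,1)$ (here $h^{-1}=1$ because $\operatorname{coker}(g^\vee)=\mathcal{H}^1(C^\vee)$ has generic rank $1$ on $\Gamma$, and $h^0=m$ because the fibrewise Euler characteristic is constant equal to $\operatorname{rk}(C)=m-2$). Spreading this out, on an \'etale neighborhood $U$ of $\eta$ one obtains a complex $[\mathcal{L}_{-1}\xrightarrow{g}\mathcal{V}_0\xrightarrow{f}\mathcal{L}_1]$ of bundles of ranks $(1,m,1)$, quasi-isomorphic to $C|_U$, with $g$ and $f$ vanishing along $\Gamma\cap U$. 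Then $\operatorname{coker}(g^\vee)$ being a line bundle on $\Gamma\cap U$ forces $Z(g)=\Gamma\cap U$ scheme-theoretically with $g$ a regular section of the rank-$m$ bundle $\mathcal{E}:=\mathcal{L}_{-1}^\vee\otimes\mathcal{V}_0$, so $[\Gamma]=c_m(\mathcal{E})$ on $U$ and the localized Euler class of $g$ is the canonical generator of $H^{2m}_{\Gamma\cap U}(U)\cong\mathbb{Q}$. On the other hand, the formal identity $c(C|_U)=c(\mathcal{V}_0)/\!\left(c(\mathcal{L}_{-1})c(\mathcal{L}_1)\right)$ together with $\operatorname{rk}\mathcal{V}_0=m$ gives, after a brief manipulation with complete homogeneous symmetric functions in the Chern roots, $c_m(C|_U)=c_m(\mathcal{E})-c_1(\mathcal{L}_1)\cdot c_{m-1}(C|_U)$; since $c_{m-1}(C)=0$ by the previous step, this reads $c_m(C)|_U=c_m(\mathcal{E})=[\Gamma]|_U$, and refining both sides to $H^{2m}_{\Gamma\cap U}(U)$ shows $\lambda_\alpha=1$. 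Together with the $K$-theory reduction this proves both assertions.

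I expect the main obstacle to be the middle of the last step: establishing that the hypotheses genuinely rigidify $C$ near $\Gamma$ into the minimal $(1,m,1)$-model with $g$ a regular section cutting out $\Gamma$ reduced, and then matching up the localized cohomology classes carefully enough that the symmetric-function computation actually determines the coefficient rather than merely comparing two cohomology classes that might both vanish. The $K$-theoretic and purity steps, and the symmetric-function identity, are routine; it is the passage to the local normal form and the bookkeeping of classes supported on $\Gamma$ that require care (and where, presumably, the hypothesis that $m$ be even is ultimately used, via the sign in $c_m(i_*L)$).
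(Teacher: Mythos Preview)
The paper does not give its own proof of this proposition: it is quoted as Lemma~4 of \cite{Markman} and simply cited, with the remark that Section~3 of \emph{op.\ cit.} extends the assertion from smooth varieties to smooth Deligne--Mumford stacks. So there is no argument in the paper to compare yours against; you are supplying what the paper deliberately outsources.

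That said, your sketch is largely sound. The $K$-theory reduction is correct (and correctly isolates where the parity of $m$ enters, via the sign in $c_m(i_*L)=(-1)^{m-1}(m-1)!\,[\Gamma]$). The purity step is fine: $c_{m-1}(C)$ and $c_m(C)$ do vanish on $\mathfrak{M}\setminus\Gamma$, and $H^{2m-2}_\Gamma(\mathfrak{M})=0$ forces $c_{m-1}(C)=0$. The local analysis is also right: at \emph{every} point of $\Gamma$ (not only generic ones) the fibre ranks are $(1,m,1)$, so a minimal $(1,m,1)$ model exists on a Zariski neighbourhood of each point; the hypothesis on $\mathcal{H}^1(C^\vee)$ then forces $Z(g)=\Gamma\cap U$ scheme-theoretically (since $\operatorname{coker}(g^\vee)\cong\mathcal{L}_{-1}^\vee\otimes\mathcal{O}_{Z(g)}$ being a line bundle on $\Gamma$ means $\mathcal{O}_{Z(g)}\cong\mathcal{O}_\Gamma$), and your symmetric-function identity gives $c_m(C)|_U=c_m(\mathcal{E})=[\Gamma]|_U$.

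The point you yourself flag is the genuine one: the passage from $c_m(C)|_U=[\Gamma]|_U$ in $H^{2m}(U)$ back to $c_m(C)=[\Gamma]$ in $H^{2m}(\mathfrak{M})$ is not automatic. Cohomology is not a Zariski sheaf, so vanishing of $c_m(C)-[\Gamma]$ on an open cover does not force global vanishing; and your phrase ``refining both sides to $H^{2m}_{\Gamma\cap U}(U)$'' needs a \emph{canonical} lift of $c_m(C)$ to $H^{2m}_\Gamma(\mathfrak{M})$ compatible with restriction, which the identity $c_m(C|_U)=c_m(\mathcal{E})-c_1(\mathcal{L}_1)c_{m-1}(C|_U)$ does not by itself supply (the localized Euler class lifts $c_m(\mathcal{E})$, but that identity lives only in $H^{2m}(U)$). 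This gap can be closed---for instance by constructing the refinement via Fulton's localized Chern classes, or by arguing that the local minimal models assemble over a single neighbourhood of $\Gamma$ and using excision $H^{2m}_\Gamma(\mathfrak{M})\cong H^{2m}_\Gamma(\mathcal{U})$---but it does need an explicit argument. If you want to see one route, Markman's original proof in \cite{Markman} is worth consulting directly.
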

\begin{remark}
Markman's Lemma 4 is ostensibly stated for smooth varieties $M$, but Section 3 of {\em op. cit.} generalizes the assertion to smooth Deligne-Mumford stacks.
\end{remark}

\subsection{Proofs of Theorems \ref{stack main thm} and \ref{main thm}}
Fix a quiver $Q$, stability condition $\theta$ for $\Qdbl$ and the corresponding stability condition $\thetagtr$ for $\Qgtr$ as in 
Section \ref{sec:semistability}.  Choosing a subgroup $\bS\subset\mathbb{G}$ as in Section \ref{sec:moduli stacks}, we obtain a  ``graph immersion'' in a product of Deligne-Mumford stacks 
\begin{equation}
\on{Rep}(\Lambda^q,\alpha)^{\theta\on{-s}}/\bS
\xrightarrow{\iota}
\on{Rep}(\Lambda^q,\alpha)^{\theta\on{-s}}/\bS \times \on{Rep}_{\on{gr}}(\sA\intN, \algtr)^{\thetagtr\on{-ss}}/\Sgtr.
\end{equation}
We write $\iota$ for the immersion and $\Gamma = \im(\iota)$ for its image, a smooth closed substack.  We remark that $\iota$ is {\em not} a closed immersion unless $H$ is trivial; however, the morphism $\iota$ identifies
\bd
\Gamma \cong \on{Rep}(\Lambda^q,\alpha)^{\theta\on{-s}}/\bS\times BH.
\ed 
It follows that $(1\times \iota)_*[\on{Rep}(\Lambda^q,\alpha)^{\theta\on{-s}}]$ is a nonzero rational multiple of $[\Gamma]$, and thus we may apply Proposition \ref{prop:image} with $(1\times \iota)_*[\on{Rep}(\Lambda^q,\alpha)^{\theta\on{-s}}]$ replaced by 
$[\Gamma]$, and we do this below.

The factors 
$\on{Rep}(\Lambda^q,\alpha)^{\theta\on{-s}}/\bS$ and  $\on{Rep}_{\on{gr}}(\sA\intN, \algtr)^{\thetagtr\on{-ss}}/\Sgtr$
come equipped with universal representations $V$, $W$ respectively.  The complex $\mathsf{Ext}$ defined in  
Section \ref{sec:Ext complex} descends to the product 
$\on{Rep}(\Lambda^q,\alpha)^{\theta\on{-s}}/\bS \times \on{Rep}_{\on{gr}}(\sA\intN, \algtr)^{\thetagtr\on{-ss}}/\Sgtr$.
We recall from Proposition \ref{prop:compactification} the compactification $\overline{\on{Rep}(\Lambda^q,\alpha)^{\theta\on{-s}}/\bS}$ of $\on{Rep}(\Lambda^q,\alpha)^{\theta\on{-s}}/\bS$, which maps to 
$\on{Rep}_{\on{gr}}(\sA\intN, \algtr)^{\thetagtr\on{-ss}}/\Sgtr$ and induces an isomorphism on the open substack
$\on{Rep}(\Lambda^q,\alpha)^{\theta\on{-s}}/\bS$.  Pulling the complex $\mathsf{Ext}$ back to the product
$\on{Rep}(\Lambda^q,\alpha)^{\theta\on{-s}}/\bS
\times
\overline{\on{Rep}(\Lambda^q,\alpha)^{\theta\on{-s}}/\bS}$,
we get a complex
 that we will denote $C$.  
 
Direct calculation shows that the rank of $C$ is $m-2 = \on{codim}(\Gamma) -2$ (we note that its rank depends only on $Q$ and $\alpha$: only the differentials distinguish between the ordinary and multiplicative preprojective algebras).  It follows from Proposition \ref{prop:Ext properties} that $C$ has the following properties:
\begin{enumerate}
\item $\mathcal{H}^{-1}(C) = 0$,
\item $\mathcal{H}^1(C)$ and $\mathcal{H}^1(C^\vee)$ are set-theoretically supported on $\Gamma$, and their scheme-theoretic restrictions to $\Gamma$ are line bundles.  
\end{enumerate}
Thus, in order to show that $\Gamma$ satisfies the hypotheses of Proposition \ref{prop:Markman}, it suffices to show that $\Gamma$ is the scheme-theoretic support of both $\mathcal{H}^1(C)$ and $\mathcal{H}^1(C^\vee)$.  We do this by considering a morphism 
\bd
\on{Spec}(k[\epsilon])\rightarrow 
\on{Rep}(\Lambda^q,\alpha)^{\theta\on{-s}}/\bS
\times
\overline{\on{Rep}(\Lambda^q,\alpha)^{\theta\on{-s}}/\bS}
\ed
(where here and throughout the remainder of the proof, $k[\epsilon]$ denotes the ring of dual numbers) with the property that the closed point maps to $\Gamma$.  Then it will suffice to show that either $\on{Spec}(k[\epsilon])$ maps scheme-theoretically to $\Gamma$, or that the pullbacks of 
$\mathcal{H}^1(C)$ and $\mathcal{H}^1(C^\vee)$ to $\on{Spec}(k[\epsilon])$ are scheme-theoretically supported at 
$\on{Spec}(k) \subset \on{Spec}(k[\epsilon])$.

We thus consider a representations $\overline{V}_\epsilon, \overline{V}'_{\epsilon}$ of $\Lambda^q[\epsilon]$ that are flat over $k[\epsilon]$  and having dimension vector $\alpha$ after tensoring with $k\otimes_{k[\epsilon]}-$;  and let $V_\epsilon =\overline{V}_\epsilon[t]$, 
$V_\epsilon' =\overline{V}'_\epsilon[t]$.   Assume $\tau_{[0,N]}V_{\epsilon}$, $\tau_{[0,N]}V_{\epsilon}'$ are $\thetagtr$-stable.  The complex $C_\epsilon$ defined as in  \eqref{eq:perfect-complex} becomes a complex of free $k[\epsilon]$-modules, and $\mathcal{H}^{-1}(C_\epsilon) = \Hom_{\sA_\epsilon\on{-Gr}}\big(\tau_{[0,N]}V_{\epsilon}, \tau_{[0,N]}V_{\epsilon}'\big)$.  This cohomology is isomorphic to $k[\epsilon]$ if and only if $\tau_{[0,N]}V_{\epsilon}\cong \tau_{[0,N]}V_{\epsilon}'$.  Thus,
$\mathcal{H}^1(C_\epsilon^\vee)$ is isomorphic to $k[\epsilon]$ if and only if $\tau_{[0,N]}V_{\epsilon}\cong \tau_{[0,N]}V_{\epsilon}'$.  It follows that the scheme-theoretic support of $\mathcal{H}^1(C^\vee)$ is the reduced diagonal $\Gamma$.  

It remains to check that the same is true of $\mathcal{H}^1(C)$.  
To do that, we again start with $\tau_{[0,N]}V_{\epsilon}$, $\tau_{[0,N]}V_{\epsilon}'$ as above, but consider them as graded $\sA$-modules (i.e., forgetting the $k[\epsilon]$-module structure) and form the complex $C$.  
Assume without loss of generality that $k\otimes_{k[\epsilon]}\tau_{[0,N]}V_{\epsilon}\cong k\otimes_{k[\epsilon]}\tau_{[0,N]}V_{\epsilon}'$ as graded $\sA$-modules.  
We have a short exact sequence of graded $\sA$-modules
\begin{equation}\label{eq:extension}
0\rightarrow \epsilon\tau_{[0,N]}V_{\epsilon} \rightarrow \tau_{[0,N]}V_{\epsilon} \rightarrow k\otimes_{k[\epsilon]}\tau_{[0,N]}V_{\epsilon} \rightarrow 0,
\end{equation}
where by $k[\epsilon]$-flatness we have $ \epsilon\tau_{[0,N]}V_{\epsilon} \cong k\otimes_{k[\epsilon]}\tau_{[0,N]}V_{\epsilon}$, both stable; and similarly for $V'$.  
Assume without loss of generality that $k\otimes_{k[\epsilon]}\tau_{[0,N]}V_{\epsilon}\cong k\otimes_{k[\epsilon]}\tau_{[0,N]}V_{\epsilon}'$ as graded $\sA$-modules.  Suppose there is a nonzero map of graded $\sA$-modules,
$\phi: \tau_{[0,N]}V_{\epsilon}\rightarrow \tau_{[0,N]}V_{\epsilon}'$.  If the composite
\begin{equation}\label{eq:composite}
\epsilon\tau_{[0,N]}V_{\epsilon} \hookrightarrow \tau_{[0,N]}V_{\epsilon} \xrightarrow{\phi} \tau_{[0,N]}V_{\epsilon}'
\twoheadrightarrow
k\otimes_{k[\epsilon]}\tau_{[0,N]}V_{\epsilon}'
\end{equation}
 is nonzero, it is an isomorphism, since both its domain and target are stable of dimension vector $\algtr$; in which case both
 \eqref{eq:extension} and its analogue for $\tau_{[0,N]}V_{\epsilon}'$ are split extensions.  This means that the tangent vector to 
$\on{Rep}(\Lambda^q,\alpha)^{\theta\on{-s}}/\bS \times \on{Rep}(\Lambda^q,\alpha)^{\theta\on{-s}}/\bS$ determined by $(\overline{V}_\epsilon, \overline{V}_\epsilon')$ is zero, and thus irrelevant to our analysis of the scheme-theoretic support of $\mathcal{H}^1(C)$.  Thus we may assume that  the composite \eqref{eq:composite} is zero, and so the morphism
$\phi$ is a homomorphism of $1$-extensions.  Now if $\phi(\epsilon\tau_{[0,N]}V_{\epsilon})\neq 0$, then again by stability it maps isomorphically onto $\epsilon\tau_{[0,N]}V_{\epsilon}'$.  Since \eqref{eq:extension} is non-split, it follows that $\phi$ is an isomorphism, implying that the tangent vector determined by $(\overline{V}_\epsilon, \overline{V}_\epsilon')$ is tangent to $\Gamma$, and again irrelevant to our analysis of the scheme-theoretic support of $\mathcal{H}^1(C)$.  Finally then, we may assume that $\phi(\epsilon\tau_{[0,N]}V_{\epsilon}) = 0$.  It follows that $\phi$ factors through the quotient $k\otimes_{k[\epsilon]}\tau_{[0,N]}V_{\epsilon}$; similarly its image lies in $\epsilon\tau_{[0,N]}V_{\epsilon}'$.  It follows that  $\Hom_{\sA}\on{-Gr}\big(\tau_{[0,N]}V_{\epsilon}, \tau_{[0,N]}V_{\epsilon}'\big)$ is scheme-theoretically supported over 
$\spec(k)\subset \spec k[\epsilon]$, and hence by Proposition \ref{prop:Ext properties}(1) that the same is true of $\mathcal{H}^1(C)$.  Since this is true for every $\spec k[\epsilon]\rightarrow \on{Rep}(\Lambda^q,\alpha)^{\theta\on{-s}}/\bS \times \on{Rep}(\Lambda^q,\alpha)^{\theta\on{-s}}/\bS $ not tangent to $\Gamma$, we conclude that $\mathcal{H}^1(C)$ has scheme-theoretic support equal to $\Gamma$, as required.

By Proposition \ref{prop:Markman}, then, we conclude that $[\Gamma] = c_m(C)$.  By Proposition \ref{prop:image}, the K\"unneth components of $c_m(C)$ thus span the image of the restriction map 
\bd
H^*\big(\overline{\on{Rep}(\Lambda^q,\alpha)^{\theta\on{-s}}/\bS}\big)
\longrightarrow
H^*\big(\on{Rep}(\Lambda^q,\alpha)^{\theta\on{-s}}/\bS\big),
\ed
which by  Proposition \ref{prop:semiproj-coh} is exactly 
$\oplus_m W_m\Big(H^m\big(\on{Rep}(\Lambda^q,\alpha)^{\theta\on{-s}}/\bS\big)\Big)$.  Since the Chern classes of $C$ are polynomials in the Chern classes of the tautological bundles (see the proof of Proposition 2.4(ii) of \cite{McNKirwan}), 
this completes the proof of Theorem \ref{stack main thm}, hence also of Theorem \ref{main thm}.\hfill\qedsymbol

\subsection{Proof of Theorem \ref{derived cat}}
The proof of Theorem \ref{derived cat} is essentially identical to that of Theorem 1.6 of \cite{McNKirwan} (and we note that Theorem \ref{derived cat} holds whenever $k$ is any field of characteristic zero and $q\in k^\times$).    Indeed, the assumption that there is a vertex $i_0\in I$ for which $\alpha_{i_0}=1$ guarantees the following.  First, we may take $\mathbb{S} = \prod_{i\neq i_0} GL(\alpha_i)$, which acts freely on the stable locus: thus, $\mathcal{M}_\theta^q(\alpha)^{\on{s}}$ is a fine moduli space for stable representations of $\Lambda^q$.
Second, exactly as in the proof of Theorem 1.6 of \cite{McNKirwan}, in the complex \eqref{eq:perfect-complex}, there are direct sum decompositions 
\bd
L(V_0,W_0) = \Hom(V_{0, i_0}, W_{0, i_0}) \oplus \big(\oplus_{i\neq i_0} \Hom(V_{0,i}, W_{0,i})\big) \; \text{and} \; 
\ed
\bd
L(V_0, W_{2g}) = \Hom(V_{0,i_0}, W_{2g, i_0})   \oplus \big(\oplus_{i\neq i_0} \Hom(V_{0,i}, W_{2g,i})\big),
\ed
so that the complex obtained by modifying \eqref{eq:perfect-complex} given by
\bd
\oplus_{i\neq i_0} \Hom(V_{0,i}, W_{0,i}) \xrightarrow{\partial_0} E(V_0,W_1) \xrightarrow{\partial_1} L(V_0,W_{2g})/\Hom(V_{0,i_0}, W_{2g, i_0})
\ed
has no cohomology at the ends, and in the middle has cohomology $\mathcal{H}$ that is a rank $m = \on{codim}(\Gamma)$ vector bundle.  Moreover, the remaining map $k=\Hom(V_{0, i_0}, W_{0, i_0}) \rightarrow E(V_0,W_1)$ defines a section $s$ of $\mathcal{H}$ whose scheme-theoretic zero locus is $Z(s) = \Gamma$.  The remainder of the proof now copies that of
Theorem 1.6 of \cite{McNKirwan}.\hfill\qedsymbol

\end{document}